\documentclass[12pt]{amsart}
\usepackage{amsmath,amssymb,amsthm,amscd,amsfonts}
\usepackage{hyperref, color} %Makes everything clickable
\usepackage{comment} %permits "comment" environment. Extremely useful.
\usepackage{tikz}
\usepackage{tikz-cd}
\usepackage{graphicx}

\usepackage{biblatex}
\addbibresource{bib.bib}

\begin{document}

\newtheorem{thm}{Theorem}[section]
\newtheorem{lem}[thm]{Lemma}
\newtheorem{defi}[thm]{Definition}
\newtheorem{prop}[thm]{Proposition}
\newtheorem{claim}[thm]{Claim}

\theoremstyle{definition}
\newtheorem{remark}[thm]{Remark}

\def\R{\mathbb{R}}
\def\E{\mathbb{E}}
\def\C{\mathbb{C}}
\def\N{\mathbb{N}}
\def\Z{\mathbb{Z}}
\def\W{\Omega}
\def\d{\delta}
\def\a{\alpha}
\def\e{\varepsilon}
\def\t{\tau}
\def\x{\xi}
\def\r{\rho}
\def\n{\nu}
\def\th{\theta}
\def\m{\mu}
\def\s{\sigma}
\def\o{\omega}
\def\l{\lambda}
\def\k{\kappa}

\author{Theodora Bourni}
\address{Department of Mathematics, University of Tennessee Knoxville, Knoxville TN, 37996}
\email{tbourni@utk.edu}
\author{Nathan Burns}
\address{Department of Mathematics, University of Tennessee Knoxville, Knoxville TN, 37996}
\email{nburns6@vols.utk.edu}
\author{Spencer Catron}
\address{Department of Mathematics, University of Washington, Seattle WA, 98195}
\email{scatron@uw.edu}

\title[Classification of Convex, Ancient, Free Boundary CSF]{Classification of Convex Ancient Solutions to  Free Boundary Curve Shortening Flow in Convex Domains.}

\begin{abstract}
We classify convex ancient curve shortening flows with free boundary on general bounded convex domains. 
\end{abstract}
\maketitle

\tableofcontents

\section{Introduction}

A smooth one-parameter family of immersions $X:M_{1} \times [0,T) \to \R^{2}$ is said to evolve via curve shortening flow if it satisfies
\begin{equation}\label{eq: csf}
	\partial_{t}X(u,t) = \vec{\k}
\end{equation}
where $\vec{\k}(u,t)$ is the curvature vector of $\Gamma_{t} := X(M_{1},t)$. It was shown by Gage and Hamilton \cite{hamilton} that, given any compact, convex, embedded initial condition $\Gamma_{0}$, there exists a solution to \eqref{eq: csf}, which exists on a maximal time interval, and further, Grayson extended this result for an arbitrary embedded closed curve. A solution to curve shortening flow is called \emph{ancient} if it exists on some time interval which contains an interval of the form $(\infty,a]$ for some $a < \infty$, which by time translation we can take to be $0$. Examples of ancient solutions include the stationary line, the shrinking circle, the Angenent oval and the grim reaper. 
The free boundary curve shortening flow is the following Neumann problem:
\begin{equation}\label{eq:fbcsf}
	\begin{cases}
		\partial_{t}X(u,t) = \vec{\k} \:\:\:\:\: \text{on} \:\: \overset{\circ}{M_{1}} \\
		X_{t}(\partial M_{1}) \subset \partial\Omega \\
		\langle \nu, \nu^{\Omega} \circ X \rangle = 0 \:\: \text{on} \:\: \partial M_{1},
	\end{cases}
\end{equation}
where $\Omega \subset \R^{2}$ is some closed and connected domain with boundary $\partial\Omega$. That is, the endpoints of the family $\Gamma_{t}$ remain orthogonal to a fixed supporting curve $\partial\Omega$. It has been shown that convex curves which lie inside a convex domain, with free boundary with respect to $\partial\Omega$, remain convex with respect to the free boundary curve shortening flow and moreover, shrink to a point on the boundary curve \cite{convergence, regularity}. Very recently, Langford and Zhu \cite{langford2023distance} proved a Grayson-type  theorem in the free boundary case, by proving that embedded curves converge in infinite time to a (unique) ``critical chord'', or contract in finite time to a ``round half-point'' on $\partial \Omega$. The classification of convex ancient solutions to the free boundary curve shortening flow was initiated by Bourni and Langford \cite{classification}, who proved that, up to rotation, there is a unique solution to this problem on the disk. In this paper we will adapt those methods to produce the following analogous result on strictly convex domains of $\R^{2}$:
\begin{thm}
	Modulo time translation, for each diameter of $\Omega$, there exist precisely two convex, locally uniformly convex, ancient solutions to the free boundary curve shortening flow in $\Omega$, one lying on each side of the diameter. By a diameter, we mean a line segment intersecting $\partial\Omega$ orthogonally.
\end{thm}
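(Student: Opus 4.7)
The plan is to adapt the construction and rigidity strategy of Bourni and Langford \cite{classification} from the disk to an arbitrary strictly convex domain, splitting the argument into existence, asymptotic analysis, and rigidity.

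For existence, fix a diameter $D$ with endpoints on $\partial\Omega$ and one of the two open components $S$ of $\Omega\setminus D$. I would produce a sequence $\Gamma^n_0\subset\overline{S}$ of smooth convex curves meeting $\partial\Omega$ orthogonally and Hausdorff-converging to $D$ as $n\to\infty$. By \cite{convergence, regularity, langford2023distance}, the free boundary flow $\Gamma^n_t$ starting from $\Gamma^n_0$ remains convex and either contracts to a boundary point or converges to a critical chord; in either case its lifespan before attaining a fixed geometric threshold (say, reaching a prescribed normalized enclosed area) tends to $\infty$ as $n\to\infty$. Time-translating so this threshold is attained at $t=0$, interior parabolic regularity together with the boundary estimates for free boundary curve shortening flow in convex ambient domains yields a smooth subsequential limit $\Gamma^\infty_t$ on $(-\infty,0]$ which is convex and, by the strong maximum principle applied to the curvature, locally uniformly convex.

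For the uniqueness half, let $\Gamma_t$ be any convex, locally uniformly convex, ancient free boundary flow in $\Omega$ on $(-\infty, 0]$. The region $R_t$ bounded by $\Gamma_t$ and the corresponding arc of $\partial\Omega$ has monotone area in $t$ (the boundary terms in Gauss--Bonnet are fixed by the orthogonality condition), so $R_t$ converges in Hausdorff distance as $t\to-\infty$ to a convex region $R_{-\infty}\subseteq\overline\Omega$. Standard blow-down analysis for convex ancient curve shortening yields decay of the curvature of $\Gamma_t$ in the interior as $t\to -\infty$, forcing the interior boundary $\partial R_{-\infty}\cap \Omega$ to be a line segment; passing the free boundary condition to the limit forces this segment to meet $\partial\Omega$ orthogonally at both endpoints, i.e., to be a diameter $D_\infty$ of $\Omega$. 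The associated side $S$ of $\Omega\setminus D_\infty$ is then determined by where $\Gamma_t$ lies for $t\ll 0$. For rigidity, suppose $\Gamma^{(1)}_t$ and $\Gamma^{(2)}_t$ are two convex ancient flows sharing $(D_\infty,S)$. I would apply an Angenent intersection-number argument adapted to the free boundary: the orthogonality condition at $\partial\Omega$ prevents creation of new intersection points there, so $\#(\Gamma^{(1)}_t\cap\Gamma^{(2)}_t)$ is nonincreasing in $t$. The common backward limit implies the intersection count vanishes for $t\ll 0$, so one curve is enclosed by the other for very negative time; the avoidance principle propagates this ordering to all $t$. A single time translation normalizes a suitable comparison quantity (for instance, the enclosed area at $t=0$), and the strong maximum principle applied at the first contact point forces $\Gamma^{(1)}\equiv\Gamma^{(2)}$ up to this translation.

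I expect the main obstacle to lie in the backward-asymptotic step. Unlike the disk case of \cite{classification}, there is no rotational symmetry to collapse the space of candidate limits to a one-parameter family, so one must independently establish that the Hausdorff limit is a single diameter (rather than a general critical chord or a positive-area convex region), which requires sharp control of the curvature decay and the width of $\Gamma_t$ as $t\to-\infty$. A related delicacy is the free boundary version of Angenent's lemma: since both curves meet $\partial\Omega$ orthogonally, intersection points at the boundary are non-transverse and require separate analysis to exclude spurious creation or destruction.
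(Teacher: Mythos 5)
Your existence sketch follows the same broad outline as the paper (approximating convex curves collapsing to the diameter, a lifespan/normalization argument, compactness via Stahl-type estimates), though the paper makes the compactness quantitative through explicit barriers built from arcs orthogonal to inscribed circles at the diameter's endpoints and an exponential decay estimate for the turning angles, which is what actually delivers the uniform-in-$\rho$ gradient and curvature bounds and the non-triviality of the limit. That part of your plan is workable in spirit.

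The genuine gap is in the rigidity step. Your argument hinges on the claim that a common backward limit forces the intersection count of two ancient solutions to vanish for $t\ll 0$, so that one curve encloses the other at very negative times. This does not follow, and is in fact false in the relevant situation: by Sturm/Angenent the number of intersections is nonincreasing in $t$, so if the two flows intersect at any time they intersect at \emph{all} earlier times; and two ancient solutions contracting to the same boundary point at $t=0$ must intersect for all $t<0$ by avoidance. So convergence of both flows to the same diameter gives no ordering for $t\ll 0$, and your avoidance-plus-first-contact argument never gets started. What is actually needed -- and what constitutes the analytic core of the paper -- is the sharp backward asymptotics: one shows that \emph{every} convex ancient solution satisfies $e^{-\lambda_0^2 t}\,y(x,t)\to A\,\varphi_{\lambda_0}(x)$ uniformly, where $\lambda_0$ and the profile $\varphi_{\lambda_0}(x)=\cosh(\lambda_0 x)+\tfrac{\kappa_1-\kappa_2}{2\lambda_0-(\kappa_1+\kappa_2)\tanh\lambda_0}\sinh(\lambda_0 x)$ come from a Robin eigenvalue problem on $[-1,1]$ with boundary data $\kappa^{\Omega}(\pm e_1)$. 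This requires (i) exponential curvature and height estimates ($\underline\kappa,\overline\kappa\le Ce^{rt}$, $|\kappa_s|\le C\kappa$, and two-sided bounds on $\kappa/y$), (ii) an upper bound on $e^{-\lambda_0^2t}y$ for an arbitrary ancient solution obtained by comparison with the constructed one, and (iii) a linearization/weak-$*$ limit argument identifying the rescaled height with the unique convex negative-mode eigenfunction. With that asymptotic in hand, uniqueness follows by time-translating one solution (which multiplies the coefficient $A$ by $e^{\lambda_0^2\tau}>1$, hence gives strict ordering for $t\ll 0$), applying avoidance, sending $\tau\to 0$, and invoking the strong maximum principle. Your proposal identifies ``the backward limit is a diameter'' as the main obstacle, but that step is comparatively soft (the paper imports it from the disk case); the missing idea is the precise exponential rate and eigenfunction profile, without which no comparison between two distinct ancient solutions can be initiated.
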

\addtocontents{toc}{\setcounter{tocdepth}{0}}
\section*{Acknowledgements}
\addtocontents{toc}{\setcounter{tocdepth}{2}}
The first and second named authors were supported by grant DMS-2105026 of the National Science Foundation.

The authors would like to thank Mat Langford for useful conversations on the topic.

\section{Existence}

In this section we will provide an explicit construction of a non-trivial ancient solution to the free boundary curve shortening flow emanating from a diameter of a convex domain. In the ensuing discussion, we shall let $\Omega$ be a strictly convex domain in $\R^{2}$ and consider a diameter of $\Omega$, $D$, that is, a line segment which intersects $\partial\Omega$ orthogonally. Note that such a diameter always exists; consider the segment which maximises the Euclidean distance amongst pairs of points on $\partial\Omega$. By scaling, translating and rotating, we may and henceforward will assume that $D$ lies on the $x$-axis with endpoints $\pm e_{1}$. 

\subsection{Barriers}

We parameterise $\partial\Omega$ via the turning angle
\begin{equation}\label{eq:boundary paramatrisation}
	\Phi:[-\tfrac{\pi}{2},\tfrac{3\pi}{2}] \to \R^{2}
\end{equation}
so that the tangent and outward unit normal to $\partial\Omega$ at $\Phi(\omega)$ are given by $\tau^{\Omega}(\omega) = (\cos\omega,\sin\omega)$ and $\nu^{\Omega}(\omega) = (\sin\omega,-\cos\omega)$. Notice that with this parameterisation and under our assumption on the diameter $D$, $\Phi(\pm \frac{\pi}{2}) = \pm e_{1}$.
Define $C^{\pm}_{r}$ to be two circles of radius $r$ that lie inside $\Omega$ and are tangent to $\partial\Omega$ at the points $\pm e_{1}$ respectively, which we can always do since the boundary is smooth (see Figure \ref{fig: barrier circles}). Moreover, we choose $r$ to satisfy
\begin{equation}\label{eq:assumptions on r}
	4r \leq \k^{\Omega}(\omega) \leq \frac{1}{4r}
\end{equation}
for all $\omega$, the reason for which will be made apparent later.

\begin{defi}\label{definitionofacuteangle}
	Let $\Gamma$ be a convex curve intersecting a circle $C$ at a point $p$, and consider the radial segment $R$ passing through $p$ and the centre of $C$. We shall say that $\Gamma$ intersects $C$ at \emph{an acute angle at $p$}, if the tangent to $\Gamma$ at $p$ locally separates $\Gamma$ and $R$ inside $C$. 
\end{defi}

Next we show that convex curves in $\Omega$ intersecting the boundary orthogonally near $D$ with $y > 0$ intersect $C^{\pm}_{r}$ in acute angles. This will enable us to create upper barriers for a solution of \eqref{eq:fbcsf}.

\begin{lem}\label{acuteangle}
	Let $\Gamma$ be a convex curve inside $\Omega$ which intersects the boundary orthogonally at two points in $\{r > y > 0\}$. Then $\Gamma$ intersects $C^{-}_{r}$ (resp. $C^{+}_{r}$) transversally at two points, and at the intersection point $p$ with the smallest (resp. larger) $x$-coordinate, $\Gamma$ intersects $C^{-}_{r}$ (resp. $C^{+}_{r}$) at an acute angle. 
\end{lem}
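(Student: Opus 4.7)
The argument localises near the endpoint $p_1$ of $\Gamma$ that is closest to $-e_1$; the corresponding statement for $C^+_r$ then follows by symmetry. Write $O^- := (-1+r,0)$ for the centre of $C^-_r$. Since $y_1 < r \leq \tfrac14$ by \eqref{eq:assumptions on r} and $\Phi(\pm \pi/2) = \pm e_1$ with $\partial\Omega$ strictly convex, the parameter $\omega_1$ defined by $p_1 = \Phi(\omega_1)$ lies close to $3\pi/2$. Taylor-expanding $\Phi$ using $|\Phi'| = 1/\kappa^\Omega$ gives $p_1 = (-1 + O(y_1^2),\, y_1)$, after which a short calculation using \eqref{eq:assumptions on r} yields
\[
0 \;<\; |p_1 - O^-|^2 - r^2 \;\leq\; y_1^2,
\]
so $p_1$ sits just outside $C^-_r$.

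Next, I would show that $\Gamma$ enters the open disk $B^-_r$ immediately after $p_1$. Parameterising $\Gamma$ by arc length $s$ from $p_1$ with $\gamma'(0) = -\nu^\Omega(\omega_1)$, a first-order computation gives
\[
\tfrac{d}{ds}\big|\gamma(s)-O^-\big|^2\Big|_{s=0} \;=\; 2\langle -\nu^\Omega(\omega_1),\, p_1 - O^-\rangle \;\approx\; -2r,
\]
which, combined with $|\gamma(0) - O^-|^2 - r^2 \leq y_1^2$, forces $\gamma(s) \in B^-_r$ for $s$ of order $y_1^2/r$. Since the opposite endpoint $p_2$ lies near $e_1$ and is therefore outside $\overline{B^-_r}$, $\Gamma$ must later exit $C^-_r$, producing at least two intersection points.

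To upgrade to exactly two transversal intersections, I would argue that over the strip $S := [-1,-1+2r] \times \R$ containing $C^-_r$ the curve $\Gamma$ is the graph $y = f(x)$ of a convex function with $f > 0$. The orthogonality at $p_1$ together with the previous expansion give an initial slope of order $-y_1\kappa^\Omega$, of absolute value at most $\tfrac14$ by \eqref{eq:assumptions on r}; since $f'$ is non-decreasing by convexity, integrating this bound on $[-1, -1+2r]$ yields $f(x) \geq y_1(1 - 2r\kappa^\Omega) \geq y_1/2 > 0$. Setting $g(x) := (x+1-r)^2 + f(x)^2 - r^2$, whose zeros detect the intersections of $\Gamma$ with $C^-_r$, one computes
\[
g''(x) \;=\; 2\bigl(1 + f'(x)^2 + f(x) f''(x)\bigr) \;>\; 0,
\]
so $g$ is strictly convex on $S$, admits at most two zeros, and both are transversal sign changes.

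Finally, by Definition \ref{definitionofacuteangle} the acute-angle condition at the leftmost intersection $p = (p_x, p_y)$ unpacks, via $\nu_\Gamma = (-f', 1)/\sqrt{1+f'^2}$, to $p_y - (p_x+1-r)\,f'(p_x) > 0$. Leftmost means $p_x + 1 - r \leq 0$; if $f'(p_x) \geq 0$ then $(p_x+1-r)\,f'(p_x) \leq 0 < p_y$ and the inequality is immediate, and if $f'(p_x) < 0$ then the bounds $|p_x+1-r| \leq r$ and $|f'(p_x)| \leq y_1\kappa^\Omega \leq y_1/(4r)$ give $(p_x+1-r)\,f'(p_x) \leq y_1/4 < y_1/2 \leq p_y$. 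I expect the main obstacle to be the reduction of $\Gamma$ to a convex graph on $S$; once this is secured, both the intersection count and the acute-angle bound are controlled by routine applications of \eqref{eq:assumptions on r}.
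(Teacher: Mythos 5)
Your strategy is viable and genuinely different from the paper's: the paper never passes to a graph representation, but instead integrates the curvature hypothesis \eqref{eq:assumptions on r} to show that points of $\partial\Omega$ lie to the right of the points of $C^-_r$ with the same turning angle (inequality \eqref{hor}), and then orders the turning angles $\theta_0>\theta_2>\theta_1$ by convexity to get the obtuse angle between the tangents of $\Gamma$ and $C^-_r$; your final inequality $p_y-(p_x+1-r)f'(p_x)>0$ is indeed equivalent to that condition. However, as written your argument has two genuine gaps. First, the reduction of $\Gamma$ to a convex graph $y=f(x)$ over the strip with $f>0$ and with the initial slope bound $|f'(x_-)|\lesssim y_1\kappa^\Omega$ is precisely the step you defer as ``the main obstacle,'' yet every later step depends on it; it does follow from convexity (monotone turning angle along $\Gamma$) together with orthogonality at \emph{both} endpoints, which confines the endpoint tangent angles to $(-\tfrac\pi2,\tfrac\pi2)$ (in fact to angles of size at most about $\tfrac14$), but this has to be carried out rather than assumed. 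Second, Step 2 does not prove that $\Gamma$ enters the open disc bounded by $C^-_r$: you compute the derivative of $|\gamma(s)-O^-|^2$ only at $s=0$ and then assert entry after $s\sim y_1^2/r$. The lemma imposes no curvature bound on $\Gamma$, so you have no control of that derivative away from $s=0$ except through convexity, and $y_1$ is only known to satisfy $y_1<r$, not to be small, so the ``order $y_1^2/r$'' heuristic is not uniformly justified. The later assertion that the leftmost intersection satisfies $p_x+1-r\le 0$ rests silently on this unproved entry statement, not on the word ``leftmost.''

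Both gaps are repairable with the machinery you already set up in Step 3, which is why I would call this a fixable gap rather than a wrong approach. Once $\Gamma$ is a convex graph with the stated slope bound, convexity gives $f(-1+r)\le\max(y_1,y_2)<r$ (a convex function attains its maximum at the endpoints), while the slope bound and monotonicity of $f'$ give $f(-1+r)\ge y_1-2r\,|f'(x_-)|\ge \tfrac{y_1}{2}>0$; hence the point of $\Gamma$ over the centre line $x=-1+r$ lies strictly inside $C^-_r$ and $g(-1+r)<0$. Since $g>0$ at the left endpoint (the point $p_0\in\partial\Omega\setminus\{-e_1\}$ is strictly outside the closed disc, by the rolling-ball property that \eqref{eq:assumptions on r} guarantees) and $g(-1+2r)=f(-1+2r)^2>0$, strict convexity of $g$ yields exactly two transversal zeros, one on each side of $x=-1+r$; in particular the leftmost intersection automatically satisfies $p_x<-1+r$, and your two-case estimate then closes the acute-angle inequality. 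With these insertions the proof is complete, though more coordinate-dependent and estimate-heavy than the paper's turning-angle comparison, which needs no Taylor expansion and treats all admissible heights $y_1<r$ on the same footing.
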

\begin{proof} We prove the lemma for $C^{-}_{r}$, as the proof for $C^{+}_{r}$ is similar.
Consider a parametrisation of $C^{-}_{r}$ by turning angle
\[
C:[-\tfrac\pi2, \tfrac{3\pi}{2}]\to \R^2\,.
\]
For any $\theta_0\in (\pi, \tfrac{3\pi}{2})$, using the fact that $C(\tfrac {3\pi}{2})=\Phi(\tfrac {3\pi}{2})=-e_1$, we have
\begin{equation}\label{hor}
\begin{split}
    \langle e_1, \Phi(\theta_0)\rangle-\langle e_1, C(\th_0)\rangle&=\int_{\tfrac{3\pi}{2}}^{\theta_0} \langle e_1,\Phi'(\theta)-C'(\theta)\rangle d\th\\
    &= \int_{\tfrac{3\pi}{2}}^{\theta_0}\cos\theta\left(\frac{1}{\k(\th)}-r\right)d\th \ge 0\,,
\end{split}
\end{equation}
with the last inequality being true because of \eqref{eq:assumptions on r}.

Since $\Gamma$ is convex, we can parametrise it by turning angle, $\gamma := \gamma(\theta)$, and since $\Gamma\cap \partial\Omega\subset\{y>0\}$, we can assume that there exists a point $x \in \Gamma$ so that $x = \gamma(0)$. Let $p_0=\gamma(\omega_0)$ and $p_1=\gamma(\omega_1)$ be the points of intersection of $\Gamma$ with $\partial \Omega$ and $C^{-}_{r}$ respectively with the smaller $x$-coordinates. Denoting by $\theta_1\in (\frac\pi2, \frac{3\pi}{2})$ the angle for which $p_1=C(\theta_1)$, the lemma is equivalent to showing that
\begin{equation}\label{anglesclaim}
(\cos\theta_1, \sin\theta_1)\cdot (\cos\omega_1, \sin\omega_1)<0\,,
\end{equation}
that is, the tangent vectors of $\Gamma$ and $C^{-}_{r}$ at $p_1$ form an angle that is bigger than $\frac\pi2$.

Let $\theta_0\in (\pi, \frac{3\pi}{2})$ be such that $\Phi(\theta_0)=p_0$. Then 
\[
(\cos\omega_0, \sin\omega_0)=(\sin\theta_0, -\cos\theta_0)\,,
\]
and therefore, $\theta_0=\omega_0+\frac{3\pi}{2}$.
Consider the tangent to $\Gamma$ at $p_0$ and let $p_2$ be the point of intersection of this tangent and $C^{-}_{r}$ with the smaller $x$-coordinate. Let $\theta_2\in (\frac\pi2, \frac{3\pi}{2})$ be such that $p_2=C(\theta_2)$. Then, using the convexity of $\Gamma$ we have that 
\[
\langle p_0, e_1\rangle<\langle p_2, e_1\rangle<\langle p_1, e_1\rangle
\]
and thus, using \eqref{hor}, we obtain $\theta_0>\theta_2>\theta_1$. Now, since $\theta_1>\theta_0$, by convexity of $\Gamma$, we have
\[
\omega_1+\tfrac {3\pi}{2}>\theta_1\,.
\]
Recalling the domains of definition for $\o_1$ and $\th_1$, this implies \eqref{anglesclaim}.
\end{proof}

\begin{remark}\label{acuteangleremark}
It should be noted here (as it will be useful later) that the proof of Lemma~\ref{acuteangle} does not require that the curvature of $C_r^-$ is constant but merely that the minimum curvature of $C_r^-$ is bigger than the maximum of $\partial \Omega$ around the point of contact. 
\end{remark}

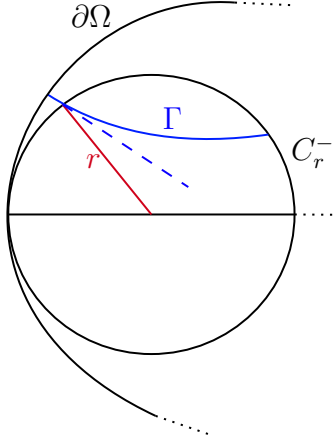
\begin{figure}[t]
\begin{center}

\tikzset{every picture/.style={line width=0.75pt}} %set default line width to 0.75pt        

\begin{tikzpicture}[x=0.75pt,y=0.75pt,yscale=-1,xscale=1,trim left=2cm]
%uncomment if require: \path (0,353); %set diagram left start at 0, and has height of 353

%Shape: Ellipse [id:dp36271499087445136] 
\draw   (64.27,167.15) .. controls (64.27,128.25) and (96.59,96.71) .. (136.45,96.71) .. controls (176.32,96.71) and (208.64,128.25) .. (208.64,167.15) .. controls (208.64,206.06) and (176.32,237.6) .. (136.45,237.6) .. controls (96.59,237.6) and (64.27,206.06) .. (64.27,167.15) -- cycle ;
%Straight Lines [id:da09941491779065781] 
\draw    (64.27,167.15) -- (208.64,167.15) ;
%Straight Lines [id:da04713004137773291] 
\draw [color={rgb, 255:red, 208; green, 2; blue, 27 }  ,draw opacity=1 ]   (91.59,111.56) -- (136.45,167.15) ;
%Curve Lines [id:da10366836540729407] 
\draw [color={rgb, 255:red, 0; green, 32; blue, 255 }  ,draw opacity=1 ]   (84.1,106.56) .. controls (119.66,129.54) and (158.84,131.95) .. (195.45,126.81) ;
%Straight Lines [id:da47747923717052565] 
\draw [color={rgb, 255:red, 11; green, 0; blue, 255 }  ,draw opacity=1 ] [dash pattern={on 4.5pt off 4.5pt}]  (91.59,111.56) -- (155.06,153.31) ;
%Curve Lines [id:da7955322276905701] 
\draw    (138.57,268.81) .. controls (2.13,204.81) and (74.91,52.91) .. (178.52,60.19) ;
%Straight Lines [id:da001424203738859653] 
\draw  [dash pattern={on 0.84pt off 2.51pt}]  (178.52,60.19) -- (206.61,61.48) ;
%Straight Lines [id:da9639167908343322] 
\draw  [dash pattern={on 0.84pt off 2.51pt}]  (138.57,268.81) -- (167.1,278.67) ;
%Straight Lines [id:da9331179436863979] 
\draw  [dash pattern={on 0.84pt off 2.51pt}]  (208.64,167.15) -- (229,167.29) ;

% Text Node
\draw (102,135.33) node [anchor=north west][inner sep=0.75pt]    {$\textcolor[rgb]{0.82,0.01,0.11}{r}$};
% Text Node
\draw (140.64,111.46) node [anchor=north west][inner sep=0.75pt]  [color={rgb, 255:red, 6; green, 0; blue, 255 }  ,opacity=1 ]  {$\Gamma $};
% Text Node
\draw (94.49,59.76) node [anchor=north west][inner sep=0.75pt]    {$\partial \Omega $};
% Text Node
\draw (205.43,125.97) node [anchor=north west][inner sep=0.75pt]    {$C_{r}^{-}$};

\end{tikzpicture}
\end{center}
\caption{$\Gamma$ intersects $C^{-}_{r}$ at an Acute Angle.}
\label{fig: acute angle}
\end{figure}

With all this in mind, we are ready to construct upper barriers for solutions to the free boundary curve shortening flow in $\Omega$.
Consider the arcs
\[
	K_{\omega} := \{(x,y) \in B^{1}: x^{2} + (\csc\omega - y)^{2} = \cot^{2}\omega\}, \omega \in (0,\tfrac{\pi}{2})
\]
which intersect $S^{1}$ orthogonally at the points $(\pm \cos\omega,\sin\omega)$. In \cite{classification}, it was observed that if we set $\omega(t) = \arcsin e^{2t}$, $t \in (-\infty,0)$, then the inward normal speed of $K_{\omega(t)}$ is no less than its curvature. By scaling and translating we see that the same is true for 
\[
	K_{t}^{\pm} := rK_{\omega(r^{-2}t)} \pm (1-r)e_{1},
\]
which are now families of curves that intersect $C^{\pm}_{r}$ orthogonally. Define the curves
\begin{equation}\label{barriercurve}
	K_{t} := K^{+}_{t} \cup L_{t} \cup K_{t}^{-}
\end{equation}
where $L_{t}$ is the line joining the point $K_{t}^{-} \cap C^{-}_{r}$ with the larger $x$-coordinate to the point of $K^{+}_{t} \cap C^{+}_{r}$ with the smaller $x$-coordinate (see Figure \ref{fig: barrier circles}). The maximum principle along with Lemma \ref{acuteangle} gives us the following:

\begin{prop}\label{barrier}
	A convex solution to the free boundary curve shortening flow in $\Omega$ which lies below $K_{\hat{t}}$ and a time $t_{0}$ for some $\hat{t} \in (-\infty,0)$ must lie below $K_{\hat{t} +t -t_{0}}$ for all $t > t_{0}$ such that $t - t_{0} + \hat{t} < 0$.
\end{prop}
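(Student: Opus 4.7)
The plan is a first-contact argument based on the geometric maximum principle (avoidance principle). Setting $\tilde K_t := K_{\hat t+t-t_0}$, I would assume for contradiction that $\Gamma_t$ fails to lie below $\tilde K_t$ for some $t>t_0$ with $\hat t+t-t_0<0$, and let $t_1$ be the infimum of such times, so that $\Gamma_{t_1}$ first touches $\tilde K_{t_1}$ at some point $p$. The task reduces to ruling out every possible location of $p$ on the piecewise smooth barrier.

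The supersolution property of each piece is the first thing to establish. The arcs $K^{\pm}_t$ inherit, by scaling by $r$ and translation, the inequality that the inward normal speed of $K_{\omega(t)}$ strictly exceeds its curvature for the reparametrisation $\omega(t)=\arcsin e^{2t}$ introduced above \eqref{barriercurve}. The segment $L_t$ has both endpoints at height $r\sin\omega(r^{-2}t)=r\,e^{2r^{-2}t}$, strictly increasing in $t$, so $L_t$ translates upward strictly while having zero curvature; hence it too is a strict supersolution.

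I would then case-analyse the contact point $p$. If $p$ is in the smooth interior of $K^{\pm}_t$ or $L_t$, write $\Gamma_t$ and $\tilde K_t$ locally as graphs $u, U$ near $p$; at first contact one has $u_x = U_x$, $u_{xx}\le U_{xx}$, $u_t \ge U_t$, which combined with the strict supersolution inequality $U_t > U_{xx}/(1+U_x^2)$ and the equation $u_t = u_{xx}/(1+u_x^2)$ yields an immediate contradiction. If instead $p$ lies on $C^{\pm}_r$, either at a free endpoint of $K^{\pm}_t$ or at the corner $K^{\pm}_t\cap L_t$, then $K^{\pm}_t$ meets $C^{\pm}_r$ orthogonally at $p$ by construction, while Lemma~\ref{acuteangle} forces $\Gamma_{t_1}$ to meet $C^{\pm}_r$ at a strictly acute angle at $p$. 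Since first contact requires common tangent at $p$ (otherwise continuity would force $\Gamma_t$ to already exceed $\tilde K_t$ at times just before $t_1$), the disagreement of tangents is the contradiction.

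The main obstacle I anticipate is the corner case in which $p = K^{\pm}_t\cap L_t$ corresponds to the intersection of $\Gamma_{t_1}$ with $C^{\pm}_r$ on the side \emph{not} explicitly handled by Lemma~\ref{acuteangle} (the smaller-$x$ intersection on $C^+_r$, and the larger-$x$ intersection on $C^-_r$). Resolving this requires either upgrading Lemma~\ref{acuteangle} via the same convexity and turning-angle comparison as in its proof to obtain an acute-angle bound at both intersections, or smoothing the corner into a genuine strict supersolution lying above both $K^{\pm}_t$ and $L_t$ near the corner so that the interior-case argument applies.
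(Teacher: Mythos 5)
Your scheme is the same as the paper's: the paper offers no more detail than ``the maximum principle along with Lemma~\ref{acuteangle}'', and your first-contact argument --- strict supersolution property of the arcs $K^{\pm}_{t}$ and of the rising segment $L_{t}$, interior tangency comparison, and the acute-angle lemma to control contact on the circles $C^{\pm}_{r}$ --- is exactly that argument spelled out. Two remarks on the points where you were unsure. First, the ``main obstacle'' you flag at the corners $K^{\pm}_{t}\cap L_{t}$ is not an obstacle at all, and needs neither an upgraded version of Lemma~\ref{acuteangle} nor a smoothing of the corner. At the right endpoint of $K^{-}_{t}$ the arc meets $C^{-}_{r}$ orthogonally, i.e.\ its tangent there is the radial direction $(\cos\omega,\sin\omega)$ with $\omega=\omega(r^{-2}(\hat t+t-t_0))$, while $L_{t}$ leaves the corner horizontally; the barrier therefore has a concave kink there as seen from above, and the region lying below $K_{t}$ has a \emph{convex} corner of interior angle $\pi-\omega<\pi$ at that point (same at the other corner). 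A contact point of $\Gamma_{t_1}$ with the barrier at such a corner would be an interior point of the smooth curve $\Gamma_{t_1}$, so both of its tangent rays $\pm v$ at the corner would have to lie in a closed sector of opening strictly less than $\pi$, which is impossible. So first contact at a corner is excluded by elementary local geometry, and the inner intersection points of $\Gamma$ with $C^{\pm}_{r}$ (not covered by Lemma~\ref{acuteangle}) never enter the argument. Second, at the free endpoints of $K^{\pm}_{t}$ your justification ``first contact requires common tangent'' is too quick, since the barrier terminates there and, a priori, $\Gamma_{t}$ could slip past the endpoint (crossing $C^{\pm}_{r}$ above it) without ever becoming tangent to, or even touching, $K^{\pm}_{t}$. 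This is precisely what the acute-angle condition of Definition~\ref{definitionofacuteangle} rules out: at the outer intersection with $C^{\pm}_{r}$ the tangent to $\Gamma$ locally separates $\Gamma$ from the radial segment inside the circle, whereas $K^{\pm}_{t}$ leaves its endpoint in the radial direction, so $\Gamma$ cannot reach or cross the barrier through its endpoint before touching the arc at an interior point, which the supersolution property forbids. Since you do invoke Lemma~\ref{acuteangle} exactly there, this is a matter of tightening the wording rather than of a missing idea.
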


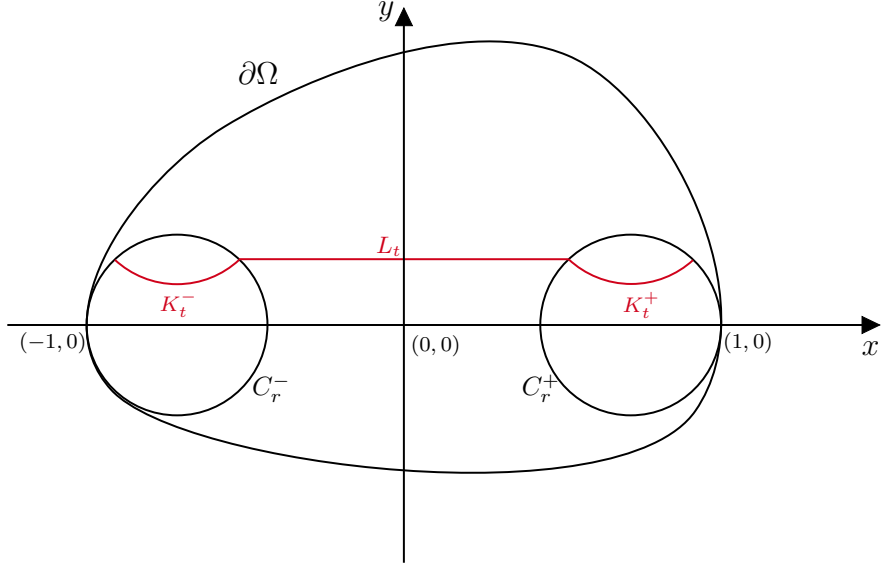
\begin{figure}[t]
\begin{center}
\tikzset{every picture/.style={line width=0.75pt}} %set default line width to 0.75pt        
\begin{tikzpicture}[x=0.75pt,y=0.75pt,yscale=-1,xscale=1]
%uncomment if require: \path (0,482); %set diagram left start at 0, and has height of 482

%Shape: Polygon Curved [id:ds7395679708432499] 
\draw   (258.09,148.25) .. controls (302.33,121.28) and (380.53,91.33) .. (430.95,112.3) .. controls (481.37,133.27) and (533.57,236.83) .. (495.5,291.75) .. controls (457.43,346.67) and (237.01,320.19) .. (201,281.25) .. controls (164.99,242.31) and (213.84,175.21) .. (258.09,148.25) -- cycle ;
%Straight Lines [id:da4277747565202743] 
\draw    (148.5,248.4) -- (585.5,248.4) ;
\draw [shift={(588.5,248.4)}, rotate = 180] [fill={rgb, 255:red, 0; green, 0; blue, 0 }  ][line width=0.08]  [draw opacity=0] (8.93,-4.29) -- (0,0) -- (8.93,4.29) -- cycle    ;
%Shape: Circle [id:dp49802142330536925] 
\draw   (188.5,248.4) .. controls (188.5,223.22) and (208.92,202.8) .. (234.1,202.8) .. controls (259.28,202.8) and (279.7,223.22) .. (279.7,248.4) .. controls (279.7,273.59) and (259.28,294) .. (234.1,294) .. controls (208.92,294) and (188.5,273.59) .. (188.5,248.4) -- cycle ;
%Shape: Circle [id:dp02440044212688086] 
\draw   (417.3,248.4) .. controls (417.3,223.22) and (437.72,202.8) .. (462.9,202.8) .. controls (488.08,202.8) and (508.5,223.22) .. (508.5,248.4) .. controls (508.5,273.59) and (488.08,294) .. (462.9,294) .. controls (437.72,294) and (417.3,273.59) .. (417.3,248.4) -- cycle ;
%Shape: Arc [id:dp20223724116311548] 
\draw  [color={rgb, 255:red, 208; green, 2; blue, 27 }  ,draw opacity=1 ] (265.34,215.59) .. controls (257.11,223.18) and (246.14,227.8) .. (234.1,227.8) .. controls (222.01,227.8) and (211,223.13) .. (202.75,215.5) ;  
%Shape: Arc [id:dp040086337944292394] 
\draw  [color={rgb, 255:red, 208; green, 2; blue, 27 }  ,draw opacity=1 ] (494.33,215.66) .. controls (486.11,223.13) and (475.19,227.69) .. (463.2,227.69) .. controls (450.97,227.69) and (439.85,222.95) .. (431.57,215.2) ;  
%Straight Lines [id:da05613834356737235] 
\draw [color={rgb, 255:red, 208; green, 2; blue, 27 }  ,draw opacity=1 ]   (265.23,215.18) -- (431.57,215.2) ;
%Straight Lines [id:da41280466705284957] 
\draw    (348.5,368.4) -- (348.5,91.4) ;
\draw [shift={(348.5,88.4)}, rotate = 90] [fill={rgb, 255:red, 0; green, 0; blue, 0 }  ][line width=0.08]  [draw opacity=0] (8.93,-4.29) -- (0,0) -- (8.93,4.29) -- cycle    ;

% Text Node
\draw (578,255) node [anchor=north west][inner sep=0.75pt]    {$x$};
% Text Node
\draw (263.5,113.9) node [anchor=north west][inner sep=0.75pt]    {$\partial \Omega $};
% Text Node
\draw (334,83.4) node [anchor=north west][inner sep=0.75pt]    {$y$};
% Text Node
\draw (270.5,271.8) node [anchor=north west][inner sep=0.75pt]  [font=\footnotesize]  {$C_{r}^{-}$};
% Text Node
\draw (406.5,271.8) node [anchor=north west][inner sep=0.75pt]  [font=\footnotesize]  {$C_{r}^{+}$};
% Text Node
\draw (350.5,251.8) node [anchor=north west][inner sep=0.75pt]  [font=\tiny]  {$( 0,0)$};
% Text Node
\draw (508,250.4) node [anchor=north west][inner sep=0.75pt]  [font=\tiny]  {$(1,0)$};
% Text Node
\draw (153,250.4) node [anchor=north west][inner sep=0.75pt]  [font=\tiny]  {$(-1,0)$};
% Text Node
\draw (224,229.9) node [anchor=north west][inner sep=0.75pt]  [font=\tiny,color={rgb, 255:red, 208; green, 2; blue, 27 }  ,opacity=1 ]  {$K_{t}^{-}$};
% Text Node
\draw (457.5,230.4) node [anchor=north west][inner sep=0.75pt]  [font=\tiny,color={rgb, 255:red, 208; green, 2; blue, 27 }  ,opacity=1 ]  {$K_{t}^{+}$};
% Text Node
\draw (333,202.4) node [anchor=north west][inner sep=0.75pt]  [font=\scriptsize,color={rgb, 255:red, 208; green, 2; blue, 27 }  ,opacity=1 ]  {$L_{t}$};
\end{tikzpicture}
\end{center}
\caption{The Curves $K_{t} = K^{+}_{t} \cup L_{t} \cup K_{t}^{-}$} in $\Omega$.
\label{fig: barrier circles}
\end{figure}

\subsection{Old-but-not-ancient solutions}

For each $\rho > 0$, choose a smooth curve $\Gamma^{\rho}$ in $\Omega$ with the following properties:
\begin{enumerate}
	\item[(a)] $\Gamma^{\rho} \subset \Omega \cap \{(x,y) \in \R^{2} \mid y>0\}$.
	\item[(b)] $\Gamma^{\rho}$ meets $\partial\Omega$ orthogonally and lies below the line $y = \rho$.
	\item[(c)] $\Gamma^{\rho} \cap \Omega$ is the relative boundary of a convex region $\Omega^{\rho} \subset \Omega \cap \{y > 0\}$.
	\item[(d)] The curvature $\k^{\rho}$ of $\Gamma^{\rho}$ has a unique critical point at which $\k^{\rho}$ is minimised.  
\end{enumerate}
Such a curve always exists. In particular we have the following.

\begin{lem}\label{lem:angenant ovals existence}
	Define the scaled and horizontally shifted Angenent oval
	\[
		A^{\lambda_{\rho},\xi_{\rho}}_{t_{\rho}} := \{(x,y) \in \Omega \mid \sin(\lambda_{\rho} y) = e^{\lambda_{\rho}^{2}t_{\rho}}\cosh(\lambda_{\rho}(x-\xi_{\rho}))\}.
	\]
	Then, for all $\rho < r$, there exist $\l_\rho, \xi_\rho$ and $t_\rho$ such that the curve $\Gamma^{\rho} := A^{\lambda_{\rho},\xi_{\rho}}_{t_{\rho}}$ satisfies \emph{(a)}-\emph{(d)}. Moreover, as $\rho \to 0$, $\lambda_{\rho} \to \lambda_{0}$, where $\lambda_{0}$ is defined to be the solution of  
	\[
		\lambda^{2} - \lambda(\k^{\Omega}(e_{1}) + \k^{\Omega}(-e_{1}))\coth(2\lambda) + \k(e_{1})\k^{\Omega}(-e_{1}) = 0,
	\]
	which satisfies $\lambda > \max\{\k^{\Omega}(e_{1}), \k^{\Omega}(-e_{1})\}$. Additionally, $\xi_{\rho} \to \xi_{0}$ where
	\[
		\xi_{0} = 1 - \frac{1}{\lambda_{0}}\cosh^{-1}\left(\frac{1}{\sqrt{1-\frac{\k^{\Omega}(e_{1})^{2}}{\lambda_{0}^{2}}}} \right).
	\]
\end{lem}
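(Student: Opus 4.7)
The plan is an implicit function theorem argument centered on the formal limit $\r = 0$. I first identify this limit: setting $\e := e^{\l^{2} t}$ and letting $p^{\pm} = (x^{\pm}, y^{\pm})$ denote the intersections of the bottom arc of $A^{\l, \x}_{t}$ with $\partial \W$, the oval equation $\sin(\l y^{\pm}) = \e \cosh(\l(x^{\pm} - \x))$ forces $y^{\pm} = O(\e)$, and the Taylor expansion $x = \pm 1 \mp \tfrac{\k^{\W}(\pm e_{1})}{2} y^{2} + O(y^{3})$ of $\partial \W$ near $\pm e_{1}$ forces $x^{\pm} = \pm 1 + O(\e^{2})$. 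Equating the slope of the oval at $p^{\pm}$, namely $\e \sinh(\l(x^{\pm} - \x))/\cos(\l y^{\pm})$, to the slope of the outward normal of $\partial \W$ at $p^{\pm}$, namely $\pm \k^{\W}(\pm e_{1}) y^{\pm} + O((y^{\pm})^{2})$, then dividing by $\e$ and sending $\e \to 0$ produces the pair of limit equations
\[
\l_{0}(1 - \x_{0}) = \tanh^{-1}\!\bigl(\k^{\W}(e_{1})/\l_{0}\bigr), \qquad \l_{0}(1 + \x_{0}) = \tanh^{-1}\!\bigl(\k^{\W}(-e_{1})/\l_{0}\bigr).
\]
Adding these, taking $\tanh$ of both sides, and invoking the addition identity $\tanh(a+b) = (\tanh a + \tanh b)/(1 + \tanh a \tanh b)$ gives exactly the transcendental equation for $\l_{0}$ in the statement; the identity $\tanh^{-1}(x) = \cosh^{-1}(1/\sqrt{1-x^{2}})$ converts the formula for $\x_{0}$ into the stated form.

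Existence of $(\l_{0}, \x_{0})$ with $\l_{0} > \max\{\k^{\W}(\pm e_{1})\}$ follows from the summed limit equation $2\l = \tanh^{-1}(\k^{\W}(e_{1})/\l) + \tanh^{-1}(\k^{\W}(-e_{1})/\l)$: the right-hand side blows up as $\l \downarrow \max\{\k^{\W}(\pm e_{1})\}$ and decays to $0$ as $\l \to \infty$, while the left-hand side is linear, so a unique root exists. To obtain $(\l_{\r}, \x_{\r})$ for small $\r > 0$, I apply the implicit function theorem to the residuals $F_{\pm}(\l, \x, \e) := \e^{-1} \cdot (\text{oval slope at } p^{\pm} - \text{normal slope at } p^{\pm})$, extended continuously to $\e = 0$ by the limits $\sinh(\l(1 \mp \x)) - \tfrac{\k^{\W}(\pm e_{1})}{\l}\cosh(\l(1 \mp \x))$. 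A direct computation, simplified using $F_{\pm}(\l_{0}, \x_{0}, 0) = 0$, shows the Jacobian $\partial_{(\l, \x)}(F_{+}, F_{-})$ at $(\l_{0}, \x_{0}, 0)$ has sign pattern $\bigl(\begin{smallmatrix} + & - \\ + & + \end{smallmatrix}\bigr)$ and hence positive determinant (positivity uses $\l_{0}^{2} > \k^{\W}(\pm e_{1})^{2}$). The IFT produces a smooth family $(\l(\e), \x(\e)) \to (\l_{0}, \x_{0})$; for each small $\r$ I pick $\e_{\r}$ small enough that the $y$-coordinates of $p^{\pm}$ on the resulting arc are below $\r$ and set $\l_{\r} := \l(\e_{\r})$, $\x_{\r} := \x(\e_{\r})$, $t_{\r} := \l_{\r}^{-2}\log \e_{\r}$.

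Conditions (a)--(d) on $\Gamma^{\r} = A^{\l_{\r}, \x_{\r}}_{t_{\r}}$ are then verified directly. The bottom arc lies in $\{y > 0\}$ for $\e_{\r} > 0$ (its minimum $y$-value is $\l_{\r}^{-1}\arcsin \e_{\r} > 0$), meets $\partial \W$ orthogonally by construction of $F_{\pm}$, and lies below $y = \r$ by the choice of $\e_{\r}$; for small $\e_{\r}$ the top arc of the oval is outside $\W$ (since its min height $\pi/(2\l_{\r})$ eventually exceeds the height of $\W$ in the relevant $x$-range), so $\Gamma^{\r}$ coincides with this single arc, giving (a) and (b). For (c), take $\W^{\r}$ to be the region of $\W \cap \{y > 0\}$ above $\Gamma^{\r}$: the curvature vector of the Angenent oval along the bottom arc points upward into $\W^{\r}$ and $\partial \W$ is convex, so $\W^{\r}$ is convex. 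For (d), implicit differentiation of $\sin(\l y) = \e \cosh(\l(x - \x))$ gives $\k = \l\e/\sqrt{1-\e^{2}}$ at the vertical tip $(\x, \l^{-1}\arcsin \e)$ and $\k = \l/\sqrt{1-\e^{2}}$ at the horizontal tips, and the standard strict monotonicity of $\k$ along half of an Angenent oval, together with $\x_{\r} \in (-1, 1)$ placing the vertical tip strictly between $p^{\pm}$, produces the unique critical (minimum) point of $\k^{\r}$. The main obstacle is the smooth extension of the residuals $F_{\pm}$ to $\e = 0$: the raw orthogonality conditions vanish trivially in the limit, and the $\e^{-1}$-rescaling requires proving that $y^{\pm}/\e$ and $(x^{\pm} \mp 1)/\e^{2}$ are smooth functions of $(\l, \x, \e)$ near $\e = 0$, which follows from an auxiliary implicit-function analysis of the system $p^{\pm} \in \partial \W \cap A^{\l, \x}_{t}$.
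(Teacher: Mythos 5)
Your proposal is essentially correct, but it reaches the result by a genuinely different route than the paper. The paper's construction is a two-stage shooting argument: first it imposes orthogonality at one boundary point $(x_{0},\phi(x_{0}))$, which determines $t$ and $\xi$ explicitly in terms of $\lambda$ (Lemma \ref{Mainappendixlemma}); then it obtains orthogonality at the second point by two intermediate-value arguments, first in $x_{0}$ with $\lambda=\pi/(2\rho)$ fixed (Lemma \ref{A3}) and then in $\lambda$ with $x_{0}$ fixed (Lemma \ref{defend}, via the claim that $\xi=-1$ is attainable). The convergence $\lambda_{\rho}\to\lambda_{0}$, $\xi_{\rho}\to\xi_{0}$ then requires separate work: uniform upper and lower bounds on $\lambda_{\rho}$ via a reflection/comparison argument with an unshifted oval (Lemma \ref{lambda0bounded}), passage to the limit in the orthogonality relations to obtain the system \eqref{eq: lambda0system}, and uniqueness of the root of the reduced transcendental equation. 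You instead linearize at the degenerate limit: you derive the same limit system $\tanh(\lambda_{0}(1\mp\xi_{0}))=\pm\kappa^{\Omega}(\pm e_{1})/\lambda_{0}$, solve it by monotonicity, and then perturb in $\varepsilon=e^{\lambda^{2}t}$ via the implicit function theorem applied to the $\varepsilon^{-1}$-rescaled orthogonality residuals. Your Jacobian computation is correct: with the boundary written as $x=\psi_{\pm}(y)$ near $\pm e_{1}$ (which also makes your auxiliary IFT for the contact points and the smooth extension of the residuals to $\varepsilon=0$ work cleanly), the limit residuals are $\sinh(\lambda(1\mp\xi))-\tfrac{\kappa^{\Omega}(\pm e_{1})}{\lambda}\cosh(\lambda(1\mp\xi))$ up to sign, and at $(\lambda_{0},\xi_{0})$ the $\xi$-derivatives have opposite signs while the $\lambda$-derivatives have the same sign (using $\lambda_{0}>\kappa^{\Omega}(\pm e_{1})$ and $\xi_{0}\in(-1,1)$), so the determinant is nonzero. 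Your route buys the asymptotics $\lambda_{\rho}\to\lambda_{0}$, $\xi_{\rho}\to\xi_{0}$ for free and avoids the paper's global shooting and compactness steps; the price is the (correctly flagged and repairable) bookkeeping needed to extend the rescaled residuals smoothly to $\varepsilon=0$. Your curvature formula $\kappa=\lambda\varepsilon\cosh(\lambda(x-\xi))/\sqrt{1-\varepsilon^{2}}$ along the bottom arc is correct and does give (d), since $\xi_{\rho}$ lies strictly between the two contact points.

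One step is wrong as written: the claim that the top arc misses $\Omega$ because ``its min height $\pi/(2\lambda_{\rho})$ eventually exceeds the height of $\Omega$.'' Over the relevant range $|x|\le 1$ the top arc sits at height close to $\pi/\lambda_{\rho}$ (the height $\pi/(2\lambda)$ is attained only at the closing tips, which lie at $|x-\xi|\sim\lambda^{-1}\log(2/\varepsilon)$, far outside $\Omega$), and nothing forces $\pi/\lambda_{0}$ to exceed the height of $\Omega$: a tall convex domain with moderate curvature at $\pm e_{1}$ gives $\pi/\lambda_{0}$ well below its top, so the literal set $\{(x,y)\in\Omega:\sin(\lambda y)=\varepsilon\cosh(\lambda(x-\xi))\}$ can contain the top arc (and further translated copies, since no restriction $0<y<\pi/\lambda$ is imposed), in which case (c) and (d) fail for that set. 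This does not damage your construction of the orthogonal bottom arc; the repair is the one the paper itself uses (Definition \ref{defA}): take $\Gamma^{\rho}$ to be the connected component of the intersection through the contact points, i.e.\ the bottom sub-arc between $p^{-}$ and $p^{+}$, for which your verification of (a)--(d) goes through.
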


The proof of the existence of such $A^{\lambda_{\rho},\xi_{\rho}}_{t_{\rho}}$ and properties thereof can be found in the appendix. \newline%presumably. 
The idea to construct an ancient solution is to take subsequential limits of the flows coming out of the curves $\Gamma^{\rho}$, as defined in Lemma \ref{lem:angenant ovals existence}, as $\rho \to 0$.

\begin{lem}\label{timetendsto-infty}
	For each $\Gamma^{\rho}$, with $\rho$ sufficiently small, there exists a $K_{t_{\rho}}$ as defined by \eqref{barriercurve}, such that $K_{t_{\rho}}$ lies above $\Gamma^{\rho}$ and $t_{\rho} \to -\infty$ as $\rho \to 0$.
\end{lem}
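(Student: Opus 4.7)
The plan is to pick $t_\rho$ so that $K_{t_\rho}$ lies strictly above the line $y = \rho$; since $\Gamma^\rho \subset \{0 < y < \rho\}$ by property (b), this will place $\Gamma^\rho$ below $K_{t_\rho}$ in the sense required by Proposition~\ref{barrier}.

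First, I would compute the height range of $K_t$. Writing $\omega := \omega(r^{-2}t) = \arcsin(e^{2r^{-2}t})$, the original arc $K_\omega$ is a piece of the circle of radius $\cot\omega$ centered at $(0,\csc\omega)$ with endpoints $(\pm\cos\omega,\sin\omega)$ on $S^1$, so its lowest point is at $y = \csc\omega - \cot\omega = \tan(\omega/2)$. After scaling by $r$ and translating by $\mp(1-r)e_1$, the arcs $K_t^\pm$ have endpoints on $C_r^\pm$ at $y = r\sin\omega$ and dip to $y = r\tan(\omega/2)$, while $L_t$ is horizontal at $y = r\sin\omega$. Hence $K_t \subset \{r\tan(\omega/2) \le y \le r\sin\omega\}$, and both bounds tend to $0$ as $t\to-\infty$.

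Second, I would impose $r\tan(\omega_\rho/2) > \rho$ with $\omega_\rho := \omega(r^{-2}t_\rho)$, which rearranges to $e^{2r^{-2}t_\rho} > \sin(2\arctan(\rho/r)) = \tfrac{2(\rho/r)}{1+(\rho/r)^2}$. Any $t_\rho$ strictly above $\tfrac{r^2}{2}\log\!\big(\tfrac{2(\rho/r)}{1+(\rho/r)^2}\big)$ suffices, and this threshold tends to $-\infty$ as $\rho\to 0$, so we may choose $t_\rho \to -\infty$.

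Finally, I would verify that $K_{t_\rho}$ lies above $\Gamma^\rho$. By construction $K_{t_\rho}$ and $\Gamma^\rho$ sit in the disjoint strips $\{y > \rho\}$ and $\{0 < y < \rho\}$, respectively, so they do not intersect in $\R^2$. Assuming $\rho < r$, Lemma~\ref{acuteangle} applies and yields two transversal intersections of $\Gamma^\rho$ with each $C_r^\pm$, all at height $< \rho < r\sin\omega_\rho$; hence these intersection points lie on the arcs of $C_r^\pm$ strictly below the endpoints of $K_{t_\rho}^\pm$. Combined with the height disjointness, the piece of $\Gamma^\rho$ inside $C_r^\pm$ must be trapped in the region of $C_r^\pm$ bounded by $K_{t_\rho}^\pm$ and the arc of $C_r^\pm$ through the tangent point $\mp e_1$, i.e., on the ``lower'' side of $K_{t_\rho}^\pm$. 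The middle portion of $\Gamma^\rho$ sits at $y < \rho < r\sin\omega_\rho$ and so lies below the horizontal segment $L_{t_\rho}$; using $\omega_\rho > 2\arctan(\rho/r) \ge \rho/r$, a short check from the explicit formulas shows that the $x$-range of $L_{t_\rho}$ covers that of the middle portion. The main subtlety is this last step: height separation alone does not automatically place $\Gamma^\rho$ on the correct side of $K_{t_\rho}$, and Lemma~\ref{acuteangle} together with the $x$-range comparison for $L_{t_\rho}$ are what close the argument.
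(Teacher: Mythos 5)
Your proposal is correct and follows essentially the same route as the paper: position $K_{t_\rho}$ so that its lowest points sit at height at least $\rho$ (the paper takes $K_{t_\rho}$ tangent to the line $y=\rho$, you take it strictly above), conclude that it lies above $\Gamma^{\rho}$ from property (b), and deduce $t_\rho \to -\infty$ from $\omega(r^{-2}t_\rho)\to 0$. Your explicit height computation ($r\tan(\omega/2)$ for the arcs, $r\sin\omega$ for $L_t$) and the final paragraph making "lies above" precise are just a more detailed rendering of the paper's one-line argument, not a different method.
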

\begin{proof}
	Given $\rho<r$, we can pick $K_{t_{\rho}}$ so that it is tangent to the line $y = \rho$ at two points. It follows that $K_{t_{\rho}}$ lies above $\Gamma^{\rho}$. As $\rho \to 0$, $\omega(r^{-2}t_{\rho}) = \arcsin(e^{2(r^{-2}t_{\rho})}) \to 0$ which implies $t_{\rho} \to -\infty$.
\end{proof}

The work of Stahl \cite{convergence,regularity}, now yields the following \emph{old-but-not-ancient solutions}.

\begin{lem}\label{oldbutnotancientsolutions}
	For each $\rho > 0$, there exists a solution to the free boundary curve shortening flow $\{\Gamma_{t}^{\rho}\}_{t \in [\alpha_{\rho},0)}$ with $\Gamma^{\rho}_{\alpha_{\rho}} = \Gamma^{\rho}$. Furthermore, this solution satisfies the following:
	\begin{enumerate}
		\item $\Gamma^{\rho}_{t}$ is convex and locally uniformly convex for each $t \in (\alpha_{\rho}, 0)$.
		\item The curvature $\k^{\rho}$ of $\Gamma^{\rho}_{t}$ has only one critical point at which $\k^{\rho}$ has a minimum.
		\item $\alpha_{\rho} \to -\infty$ as $\rho \to 0$.
	\end{enumerate}
\end{lem}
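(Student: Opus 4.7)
The plan is to invoke Stahl's short-time existence and convergence results~\cite{convergence, regularity} for the existence of the flow and convexity preservation, and then combine the parabolic maximum principle with the barrier of Proposition~\ref{barrier} to obtain the remaining conclusions. Since $\Gamma^{\rho}$ is smooth, strictly convex, and meets $\partial\Omega$ orthogonally, Stahl produces a smooth solution of~\eqref{eq:fbcsf} with initial datum $\Gamma^{\rho}$ which exists on a maximal forward interval, remains convex, and contracts to a point of $\partial\Omega$ at its upper endpoint. Normalising time so that this endpoint is $0$ yields the required family $\{\Gamma^{\rho}_{t}\}_{t\in[\alpha_{\rho},0)}$ and the convexity statement in~(1).

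For the local uniform convexity part of~(1), I would apply the strong parabolic maximum principle to the curvature evolution equation $\partial_{t}\k=\partial_{s}^{2}\k+\k^{3}$, supplemented by the Robin-type boundary condition $\partial_{s}\k=\pm\k\k^{\Omega}$ at the endpoints (with signs determined by the orientation of arclength) that arises from differentiating the orthogonality condition along the flow. Strict convexity of $\Gamma^{\rho}$ ($\k>0$ initially) then propagates to all $t\in(\alpha_{\rho},0)$. For~(2), differentiating the curvature equation with respect to arclength produces a linear parabolic equation for $\partial_{s}\k$, to which Angenent's Sturmian theorem applies. The same boundary condition prescribes nonzero values of $\partial_{s}\k$ at the endpoints, so no interior zeros can migrate from the boundary, and the number of interior critical points of $\k$ is non-increasing in time. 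Hypothesis~(d) supplies exactly one such critical point (a minimum) at $t=\alpha_{\rho}$, and the opposite signs of $\partial_{s}\k$ at the two endpoints, combined with the monotonicity-of-zeros theorem, force this unique critical point to remain a minimum throughout the evolution.

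For~(3), which I expect to be the main obstacle, Lemma~\ref{timetendsto-infty} and Proposition~\ref{barrier} together confine $\Gamma^{\rho}_{t}$ below the shrinking barrier $K_{t_{\rho}+t-\alpha_{\rho}}$ on the admissible time interval, where $t_{\rho}\to -\infty$ as $\rho\to 0$. The plan is a contradiction argument: assume $\alpha_{\rho_{k}}\to\alpha_{\infty}\in(-\infty,0]$ along some subsequence $\rho_{k}\to 0$. For any fixed $t\in(\alpha_{\infty},0)$ the barrier parameter $t_{\rho_{k}}+t-\alpha_{\rho_{k}}\to -\infty$, so $K_{t_{\rho_{k}}+t-\alpha_{\rho_{k}}}$ collapses onto the diameter $D$ and pins $\Gamma^{\rho_{k}}_{t}$ into an arbitrarily small neighbourhood of $D$. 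A subsequential compactness argument for CSF then yields a limit flow on $(\alpha_{\infty},0)$ equal to the stationary diameter $D$, which contradicts the fact that each $\Gamma^{\rho_{k}}_{t}$ contracts at $t=0$ to a point of $\partial\Omega$ whose limiting position, controlled by the Angenent oval parameters $\lambda_{\rho_{k}}\to\lambda_{0}$ and $\xi_{\rho_{k}}\to\xi_{0}$ from Lemma~\ref{lem:angenant ovals existence}, lies at positive distance from $D$. The delicate part is producing uniform curvature estimates along the flow (needed to extract a smooth subsequential limit) and pinning down the final singular point sufficiently precisely to rule out the degenerate case in which it collapses onto $\pm e_{1}$.
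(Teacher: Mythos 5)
Your treatment of parts (1) and (2) is essentially the paper's: Stahl's results give the maximal orthogonal solution, preservation of convexity and local uniform convexity, and contraction to a point of $\partial\Omega$, and (2) follows from the Sturmian theorem applied to $\k_{s}$ together with the boundary relation $|\k_{s}|=\k\k^{\Omega}$, exactly as you describe. The gap is in (3), where your contradiction is not closed. The decisive step -- that the extinction point of $\Gamma^{\rho_{k}}_{t}$ at $t=0$ has a limiting position ``controlled by $\lambda_{\rho_{k}}\to\lambda_{0}$, $\xi_{\rho_{k}}\to\xi_{0}$'' and lying at positive distance from $D$ -- has no justification: Lemma~\ref{lem:angenant ovals existence} constrains only the initial curve, not where the flow becomes extinct, and nothing you have rules out the extinction point sliding toward $\pm e_{1}$ as $\rho_{k}\to 0$; this is precisely the ``degenerate case'' you flag yourself and leave open. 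Moreover, even granting a smooth subsequential limit equal to the stationary diameter on $(\alpha_{\infty},0)$, that alone does not contradict extinction of each flow at $t=0$: you would still need a quantitative statement that a curve trapped near $D$ at a fixed time $-\epsilon$ cannot vanish by time $0$, and the uniform curvature estimates you invoke for the compactness step are not available at this stage of the argument (they are derived later, for the already-constructed solutions).

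The missing ingredient, which also makes the compactness detour and any localisation of the extinction point unnecessary, is an area comparison, and it is what makes the paper's (3) an immediate consequence of Lemma~\ref{timetendsto-infty} and Proposition~\ref{barrier}. Every barrier $K_{s}$, $s<0$, defined in \eqref{barriercurve} lies in the strip $\{0<y\leq r\}$, so as long as $\Gamma^{\rho}_{t}$ lies below some $K_{s}$ the convex region it encloses with $\partial\Omega$ contains $\Omega\cap\{y>r\}$ (up to the small caps of the disks bounded by $C^{\pm}_{r}$) and hence has area bounded below by a positive constant $c_{0}$ depending only on $\Omega$ and $r$. Since the enclosed area tends to $0$ at the extinction time $t=0$ (indeed it decreases at rate at most $\pi$, as in Remark~\ref{boundsont0}), the solution cannot remain below the translating barriers all the way to $t=0$. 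By Proposition~\ref{barrier} it does remain below $K_{t_{\rho}+t-\alpha_{\rho}}$ for every $t$ with $t_{\rho}+t-\alpha_{\rho}<0$, so there must exist $t^{*}<0$ with $t_{\rho}+t^{*}-\alpha_{\rho}\geq 0$, i.e.\ $\alpha_{\rho}\leq t_{\rho}+t^{*}<t_{\rho}$, and Lemma~\ref{timetendsto-infty} then gives $\alpha_{\rho}\to-\infty$. Replacing your limit-flow argument by this containment observation closes the gap with no compactness, no curvature estimates, and no need to know where the flow becomes extinct.
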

\begin{proof}
	Existence of a maximal solution to curve shortening flow out of $\Gamma^{\rho}$ which meets $\partial\Omega$ orthogonally was proven by Stahl \cite{convergence, regularity}, similarly it was shown there that the solution remains convex, locally uniformly convex and shrinks to a point on the boundary at the final time. We obtain our solution $\{\Gamma_{t}^{\rho}\}_{t \in [\alpha_{\rho},0)}$ via time translation.
	By \cite{convergence}, we know that at a boundary point $|\k^{\rho}_{s}| = \k^{\rho}\k^{\Omega}$ and so an application of Sturm's theorem \cite{MR953678} to $\k^{\rho}_{s}$ proves (2).
	Finally property (3) is an immediate consequence to Lemma \ref{timetendsto-infty} and Proposition \ref{barrier}. 
\end{proof}

Next, we wish to show that we can obtain estimates for the curvature and its derivatives for the the flows $\{\Gamma_{t}^{\rho}\}_{t \in [\alpha_{\rho},0)}$ which are uniform in $\rho$.
In the following, we fix $\rho > 0$,  drop the super/sub-script $\rho$ and fix the following notation;
\begin{equation}\label{eq: definition of p}
	\underline{\k}(t) := \min_{\Gamma_{t}} \k = \k(p(t))
\end{equation}
and 
\begin{equation}\label{eq: definition of q}
	\{q^{\pm}(t)\} = \partial\Omega \cap \Gamma_{t}
\end{equation}
with $x(q^{-}) < x(q^{+})$. We define $\theta_{\pm} \in (0,\pi)$ to be such that if we parameterise $\Gamma_{t}$ via the turning angle, $\gamma_{t} = \gamma_{t}(\theta)$, as described for $\Phi$ in the beginning of subsection 2.1, then the domain of $\gamma_{t}$ is $[-\theta_{-},\theta_{+}]$. Finally, we will write 
\[
	\overline{\k}_{+}(t) := \k(q^{+}(t)), \:\: \overline{\k}_{-}(t) := \k(q^{-}(t)).
\]

\begin{lem}\label{gradientbound}
	For any old-but-not-ancient solution, with $\rho$ sufficiently small, there exists a constant $C$ such that for all $t < -\frac{2|\Omega|}{\pi}$,
	\[
		\sin\left(\frac{\theta_{+}(t) + \theta_{-}(t)}{2}\right) \leq Ce^{rt}
	\]
	where $r$ is defined as in \eqref{eq:assumptions on r}.
\end{lem}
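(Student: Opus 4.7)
The plan is to propagate the upper barrier $K_s$ from Proposition~\ref{barrier} forward in time to bound the maximum $y$-coordinate of $\Gamma^\rho_t$, and then convert this height bound into a bound on $\theta_+ + \theta_-$ using the free boundary orthogonality condition at the endpoints $q^\pm(t)$.

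For the height bound, Lemma~\ref{timetendsto-infty} provides $\hat t_\rho < 0$ (depending on $\rho$) with $K_{\hat t_\rho}$ above $\Gamma^\rho_{\alpha_\rho}$, and Proposition~\ref{barrier} then gives $\Gamma^\rho_t$ below $K_{\hat t_\rho + t - \alpha_\rho}$ for $t < \alpha_\rho - \hat t_\rho$. By construction, the maximum height of $K_s$ equals the height of the line $L_s$, namely $r\sin\omega(r^{-2}s) = re^{2r^{-2}s}$. Since \eqref{eq:assumptions on r} forces $r \leq 1/4$, we have $2r^{-2} \geq r$ and hence $e^{2r^{-2}t} \leq e^{rt}$ for $t \leq 0$, giving $\max_{\Gamma^\rho_t} y \leq C_\rho e^{rt}$ for an appropriate $C_\rho$.

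For the turning-angle bound, parameterize $q^+(t) = \Phi(\pi/2 + \delta_+)$ and $q^-(t) = \Phi(3\pi/2 - \delta_-)$ with $\delta_\pm \geq 0$. A Taylor expansion of $\Phi$ near $\pm e_1$, using $\Phi'(\omega) = (\cos\omega,\sin\omega)/\kappa^\Omega(\omega)$, gives $y(q^\pm) = \delta_\pm/\kappa^\Omega(\pm e_1) + O(\delta_\pm^2)$, so the height bound yields $\delta_\pm \leq C'_\rho e^{rt}$ once $\delta_\pm$ is small enough. The orthogonality $\langle \nu, \nu^\Omega\rangle = 0$ forces the tangent of $\Gamma^\rho_t$ to be parallel to $\nu^\Omega(\omega_\pm)$ at $q^\pm$, corresponding to absolute angles $\pm\delta_\pm$ (with sign determined by the direction of travel along the curve); matching this to the turning-angle parameterization $\gamma_t \colon [-\theta_-,\theta_+]\to \R^2$ shows $\theta_\pm = \delta_\pm$, and the conclusion follows from $\sin x \leq x$ for $x \geq 0$.

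The main technical point is that the barrier must be valid throughout $t \in [\alpha_\rho, -2|\Omega|/\pi]$, which requires $\alpha_\rho - \hat t_\rho > -2|\Omega|/\pi$. This reduces to comparing the rate $\hat t_\rho \sim (r^2/2)\log(\rho/r)$ from Lemma~\ref{timetendsto-infty} against the lifespan $|\alpha_\rho|$ coming from the Angenent-oval construction in Lemma~\ref{lem:angenant ovals existence}, and is what pins down the hypothesis ``for $\rho$ sufficiently small'' in the statement.
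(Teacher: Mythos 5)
The boundary conversion in your second step is fine: orthogonality does give $\theta_\pm=\delta_\pm$, and the expansion of $\Phi$ near $\pm e_1$ gives $y(q^\pm)\ge c\,\delta_\pm$ while $\delta_\pm$ is small, so an exponential height bound with a controlled constant would indeed yield the lemma. The gap is in the height bound itself. Proposition~\ref{barrier} keeps $\Gamma^{\rho}_t$ below $K_{t_\rho+t-\alpha_\rho}$ only while $t_\rho+t-\alpha_\rho<0$, i.e.\ on the window $[\alpha_\rho,\alpha_\rho-t_\rho)$, and your whole argument hinges on the assertion $\alpha_\rho-t_\rho>-\frac{2|\Omega|}{\pi}$, which you do not prove and which in fact fails for small $\rho$. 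Note that the barrier is precisely how the paper obtains $\alpha_\rho\to-\infty$ in Lemma~\ref{oldbutnotancientsolutions}: the solution must sweep points of height $>r$ before its extinction at $t=0$, while it stays below the barrier (whose height never exceeds $r$) as long as the comparison is in force, so necessarily $\alpha_\rho-t_\rho<0$; nothing bounds $\alpha_\rho-t_\rho$ from below. Quantitatively, the barrier's height grows like $e^{2r^{-2}t}$, whereas the solutions' height grows like $e^{\lambda_0^2t}$ (cf.\ Lemmas~\ref{height1}, \ref{height2} and Proposition~\ref{importantprop}), and by the appendix $\lambda_0\le\sigma$ with $\sigma\tanh\sigma\le\frac{1}{4r}$, so $\lambda_0^2<2r^{-2}$: the barrier rises strictly faster than the solution it is supposed to cap, so one expects $t_\rho-\alpha_\rho\approx(\lambda_0^{-2}-\tfrac{r^2}{2})\log\tfrac{1}{\rho}\to\infty$. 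Then your validity window $[\alpha_\rho,\alpha_\rho-t_\rho)$ recedes to $-\infty$ and misses $t=-\frac{2|\Omega|}{\pi}$ entirely for small $\rho$, and even inside it your constant $C_\rho=r\,e^{2r^{-2}(t_\rho-\alpha_\rho)}$ blows up as $\rho\to0$ --- which is also fatal for the way the lemma is used later (Proposition~\ref{constructedancientsolution}, Lemma~\ref{height1}), where the constant must be uniform in $\rho$.

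For contrast, the paper does not use the barrier here at all. It differentiates the boundary angles, $\frac{d\theta_\pm}{dt}=\overline{\kappa}_\pm\kappa^\Omega\ge 4r\,\overline{\kappa}_\pm$ (from $|\kappa_s|=\kappa\kappa^\Omega$ and \eqref{eq:assumptions on r}), uses convexity and the fact that the horizontal width is at most $2$ to get $\overline{\kappa}_++\overline{\kappa}_-\ge\frac{1}{2}(\sin\theta_++\sin\theta_-)$, deduces $\frac{d}{dt}\log\sin\frac{\theta_++\theta_-}{2}\ge r$ while $\theta_++\theta_-\le\frac{\pi}{2}$, and integrates up to the time $t_0$ at which $\theta_++\theta_-=\frac{\pi}{2}$; the area identity $-\frac{dA}{dt}=\theta_++\theta_-$ gives $-t_0\le\frac{2|\Omega|}{\pi}$, which is exactly what makes the constant independent of $\rho$ and the estimate valid on all of $t<-\frac{2|\Omega|}{\pi}$. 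If you wish to salvage a comparison-type proof, you would need barriers whose height grows at a rate no larger than the true rate $\lambda_0^2$, not the family $K_t$.
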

\begin{proof}
	We use the parametrisation of $\Gamma_{t}$ by turning angle
	\[
		\gamma_{t} : [-\theta_{-}, \theta_{+}] \to \Omega,
	\]
	so that the unit tangent vector to the curve at $\gamma_{t}(\theta)$ is given by $\tau_{t}(\theta) = (\cos\theta,\sin\theta)$. Let $\underline{\theta} := \underline{\theta}(t)$ be such that $\gamma(\underline{\theta}(t)) = p(t)$.
	Since $\Gamma_{t}$ is convex, we have
	\[
	2 \geq \langle q^{+} - p, e_{1}\rangle = \int_{\underline{\theta}}^{\theta_{+}} \frac{\cos u}{\k(u)}du \geq \frac{1}{\overline{\k}_{+}}\int_{\underline{\theta}}^{\theta_{+}} (\cos u) du = \frac{\sin\theta_{+} - \sin\underline{\theta}}{\overline{\k}_{+}}
	\]
	and similarly
	\[
		2 \geq \langle p - q^{-},e_{1}\rangle = \int_{-\theta_{-}}^{\underline{\theta}}\frac{\cos u}{\k(u)}du \geq \frac{1}{\overline{\k}_{-}}\int_{-\theta_{-}}^{\underline{\theta}}(\cos u)du = \frac{\sin\underline{\theta} + \sin\theta_{-}}{\overline{\k}_{-}},
 	\]
 	which yields
 	\[
 		\overline{\k}_{+} + \overline{\k}_{-} \geq \frac{\sin\theta_{+} + \sin\theta_{-}}{2}.
 	\]
 	At the boundary $|\k_{s}| = \k\k^{\Omega}$, and so 
 	\[
 		\frac{d\theta_{+}}{dt} = \overline{\k}_{+}\k^{\Omega}, \:\: \frac{d\theta_{-}}{dt} = \overline{\k}_{-}\k^{\Omega},
 	\]
 	therefore,
 	\begin{align*}
 		\frac{d(\theta_{+} + \theta_{-})}{dt} &\geq 4r(\overline{\k}_{+} + \overline{\k}_{-}) \geq 2r(\sin\theta_{+} + \sin\theta_{-})\\
 		&= 4r\tan\left(\tfrac{\theta_{+} + \theta_{-}}{2}\right)\cos\left(\tfrac{\theta_{+} + \theta_{-}}{2}\right)\cos\left(\tfrac{\theta_{+} - \theta_{-}}{2}\right).
 	\end{align*}
 	Consider the time $t_{0}$ for which
 	\begin{equation}\label{eq: definition of t0}
 		\theta_{+}(t) + \theta_{-}(t) = \frac{\pi}{2}.
 	\end{equation}
 	Note that by considering $\rho$ sufficiently small, we can ensure that such a $t_{0} \geq \alpha_{\rho}$ exists. Thus for any $t < t_{0}$ we have
 	\[
 		\frac{d(\theta_{+} + \theta_{-})}{dt} \geq 2r\tan\left(\frac{\theta_{+} + \theta_{-}}{2}\right),
 	\]
 	which, after integrating from $t$ to $t_{0}$, yields
 	\[
 		\sin\left(\frac{\theta_{+} + \theta_{-}}{2}\right) \leq Ce^{rt}.
 	\]
 	Finally, to bound $t_{0}$, by monotonicity of $\theta_{\pm}(t)$, we have
 	\[
 		\theta_{+}(t) + \theta_{-}(t) \geq \frac{\pi}{2}
 	\]
 	for all $t \geq t_{0}$. Define $A(t)$ to be the area of the convex region enclosed by $\Gamma_{t}$ and $\partial\Omega$. Since 
 	\[
 		-\frac{dA}{dt} = \theta_{+}(t) + \theta_{-}(t),
 	\]
 	integrating from $t_{0}$ to $0$ gives us
 	\[
 		A(t_{0}) \geq -\frac{\pi}{2}t_{0},
 	\]
 	and since $A(t_{0}) \leq |\Omega|$, we obtain $-t_{0} \leq \frac{2|\Omega|}{\pi}$. 
 	\end{proof}
 
\begin{remark}\label{boundsont0} We remark here, as it will be used later, that we can also find an upper bound for $t_{0}$, the first time satisfying \eqref{eq: definition of t0}.
Indeed, consider a continuous function $h:[0,\tfrac{\pi}{2}] \to \R_{\geq 0}$ sending $\theta$ to the area of the convex region enclosed by $\partial\Omega$ and the line segment joining points $p_{\theta} = \Phi(\frac{\pi}{2} + \theta), q_{\theta} = \Phi(\pi + \theta) \in \partial\Omega$, where $\Phi(\theta)$ is the angle parametrisation defined in \eqref{eq:boundary paramatrisation}. By smoothness of the boundary, $h(\theta)$ is continuous, and $h(\theta) > 0$. By compactness of $\overline{\Omega}$, $\inf_{\theta \in [0,\frac{\pi}{2}]}h(\theta) > 0$. Convexity of $\{\Gamma_{t}\}_{t \in [\alpha_{\rho},0)}$ implies that $A(t_{0}) \geq \inf_{\theta \in [0,\frac{\pi}{2}]}h(\theta)$, independent of $\alpha_{\rho}$. Thus, by using the fact that 
 	\[
 		-\frac{dA}{dt} = \theta_{+}(t) + \theta_{-}(t) \leq \pi
 	\] 
 	for all $t<0$, integrating from $t_{0}$ to $0$ and using the fact that $A(t_{0}) \geq \inf_{\theta \in [0,\frac{\pi}{2}]}h(\theta)$, gives the upper bound for $t_{0}$.
\end{remark}
%The bound on t_{0} ensures that regardless of $\rho$ (\rho sufficiently small), as long as t < -2|\Omega|/\pi, the bound remains true. 

With Lemma \ref{gradientbound}, we can obtain uniform estimates which yield the following.

\begin{prop}\label{constructedancientsolution}
	For any diameter $D$ of $\Omega$, there exist two convex, locally uniformly convex, ancient solutions to the free boundary curve shortening flow in $\Omega$, one lying on each side of the diameter. As $t \to -\infty$ both of these solutions converge to $D$.
\end{prop}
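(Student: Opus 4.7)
The plan is to construct the ancient solution as a smooth subsequential limit, as $\rho \to 0$, of the old-but-not-ancient solutions $\{\Gamma_t^\rho\}_{t \in [\alpha_\rho, 0)}$ produced by Lemma~\ref{oldbutnotancientsolutions}. Since $\alpha_\rho \to -\infty$, for every compact interval $[a,b] \subset (-\infty, 0)$ the solutions $\Gamma_t^\rho$ are defined on $[a,b]$ for all $\rho$ sufficiently small, and it suffices to obtain uniform-in-$\rho$ smooth estimates on such intervals, extract a smooth subsequential limit by Arzela--Ascoli together with a diagonal argument over an exhaustion of $(-\infty,0)$ by compact intervals, and then verify that the limit has the stated properties.

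The technical heart is the uniform estimate. For positions, Proposition~\ref{barrier} combined with Lemma~\ref{timetendsto-infty} traps each $\Gamma_t^\rho$ below the barrier $K_t$ for every $t \in [a,b]$ once $\rho$ is small, confining it to a fixed compact subregion of $\Omega \cap \{y > 0\}$. For curvature, property (2) of Lemma~\ref{oldbutnotancientsolutions} says the maximum of $\kappa^\rho$ on $\Gamma_t^\rho$ is attained at a boundary endpoint, so it is enough to bound $\bar\kappa_\pm^\rho(t)$. The identity $\dot\theta_\pm = \bar\kappa_\pm \kappa^\Omega$, the monotonicity of $\theta_\pm(t)$, and the containment $\theta_\pm \in (0,\pi)$ yield a uniform integral-in-time bound on $\bar\kappa_\pm^\rho$ on $[a,b]$. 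Upgrading to a pointwise bound is where I expect the main difficulty: the idea is to combine this with the standard parabolic curvature estimate for free-boundary CSF in a smooth convex domain (along the lines of Stahl \cite{convergence, regularity}), using the fact that $\alpha_\rho \to -\infty$ to exploit long-time smoothing while $b < 0$ keeps us uniformly away from the extinction singularity. Once a pointwise curvature bound is in place, uniform bounds on all derivatives of $\kappa$ follow from boundary Schauder estimates.

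With uniform smooth estimates, Arzela--Ascoli plus diagonalization produces a subsequence $\rho_n \to 0$ along which $\Gamma_t^{\rho_n}$ converges smoothly on compact subsets of $(-\infty,0)$ to a smooth family $\{\Gamma_t\}_{t \in (-\infty,0)}$ solving \eqref{eq:fbcsf}. Convexity passes to the limit, and local uniform convexity $\kappa > 0$ is preserved by the strong maximum principle applied to $\partial_t \kappa = \kappa_{ss} + \kappa^3$ with its Robin boundary condition, provided the limiting curve is not a straight segment; the latter is guaranteed by the uniform lower bound on enclosed area supplied by Remark~\ref{boundsont0}. Passing Lemma~\ref{gradientbound} to the limit gives $\sin((\theta_+(t) + \theta_-(t))/2) \leq C e^{rt}$, so the tangent angles at the endpoints shrink to $0$ as $t \to -\infty$; since $\Gamma_t$ is trapped between $D$ and $K_t$, and $K_t$ itself Hausdorff-converges to $D$ as $t \to -\infty$, we conclude that $\Gamma_t \to D$ in Hausdorff distance. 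Finally, the second solution on the opposite side of $D$ is produced by repeating the entire construction in $\Omega \cap \{y < 0\}$, all steps going through verbatim after reflecting about the $x$-axis.
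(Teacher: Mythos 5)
Your overall scheme (limits of the old-but-not-ancient solutions $\{\Gamma^\rho_t\}$, using $\alpha_\rho\to-\infty$, nontriviality from Remark~\ref{boundsont0}, convexity passing to the limit, reflection for the second solution) is the paper's scheme, but the central analytic step is left open in your write-up, and it is exactly the step the paper closes differently. You reduce the curvature bound to bounding $\overline{\kappa}_\pm^\rho$ (correct, via property (2) of Lemma~\ref{oldbutnotancientsolutions}), obtain only an integral-in-time bound from $\dot\theta_\pm=\overline{\kappa}_\pm\kappa^\Omega$, and then say the upgrade to a pointwise bound should follow from ``standard parabolic estimates'' and ``long-time smoothing.'' As stated this is a genuine gap: an $L^1$-in-time bound on the maximal curvature does not by itself rule out pointwise blow-up for $\partial_t\kappa=\kappa_{ss}+\kappa^3$ with the Robin boundary condition $|\kappa_s|=\kappa\kappa^\Omega$; one would need a quantitative doubling or $\varepsilon$-regularity argument, which you neither state nor prove, and Stahl's estimates are not of that form.

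The missing idea is that Lemma~\ref{gradientbound} already supplies the hypothesis that makes Stahl's estimates applicable: writing $\Gamma^\rho_t$ as a graph $x\mapsto y^\rho(x,t)$ over the $x$-axis, convexity forces the turning angle along the whole curve to lie in $[-\theta_-,\theta_+]$, so Lemma~\ref{gradientbound} gives a uniform-in-$\rho$ bound on $|y^\rho_x|=|\tan\theta|$ on any compact time interval. With this uniform gradient bound, Stahl's global-in-space, interior-in-time Ecker--Huisken type estimates \cite{convergence} directly yield uniform-in-$\rho$ bounds on $\kappa$ and all its derivatives, after which your compactness, nontriviality, convexity, and local uniform convexity (the paper invokes \cite[Corollary 4.5]{regularity} here rather than a bare strong maximum principle) arguments go through. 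Two smaller remarks: your use of the angle bound to conclude $\Gamma_t\to D$ is fine, but the claim that the limit is ``trapped between $D$ and $K_t$'' needs care, since each $\Gamma^\rho_t$ sits below a time-shifted barrier $K_{\hat t_\rho+t-\alpha_\rho}$ and the relation between $\hat t_\rho$ and $\alpha_\rho$ is not controlled; the angle estimate alone suffices and should carry that part of the argument.
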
 
\begin{proof}
	For each $\rho$ sufficiently small, consider the old-but-not-ancient solution $\{\Gamma_{t}^{\rho}\}_{t \in [\alpha_{\rho},0)}$ as constructed in Lemma \ref{oldbutnotancientsolutions}. If we represent $\Gamma_{t}^{\rho}$ as a graph over the $x$-axis; $x \mapsto y^{\rho}(x,t)$, then convexity and Lemma~\ref{gradientbound} implies that $|y^{\rho}_{x}| = |\tan\theta|$ can be bounded uniformly in $\rho$, which in turn implies the gradient of this graph representation is uniformly bounded in $\rho$. Therefore, Stahl's (global in space, interior in time) Ecker-Huisken type estimates \cite{convergence} imply uniform-in-$\rho$ bounds for the curvature and its derivatives. Therefore, the limit
	\[
		\{\Gamma_{t}^{\rho}\}_{t \in [\alpha_{\rho},0)} \to \{\Gamma_{t}\}_{t \in (-\infty,0)}
	\]
	exists in the smooth topology (globally in space on compact subsets of time), and the limit satisfies the curve shortening flow with free boundary in $\Omega$. By our uniform bounds on $t_{0}$ in Remark~\ref{boundsont0}, it is easily verified that this limit is not the trivial solution. Since each $\Gamma_{t}$ is the limit of convex boundaries, we conclude that the limit is also convex at each time-slice, and, by \cite[Corollary 4.5]{regularity}, $\Gamma_{t}$ is also locally uniformly convex for all $t$. The second solution is achieved by repeating the construction in $\Omega \cap \{y < 0\}$.
\end{proof}

\section{Asymptotics for the Height}

In this section, we fix the ancient solution, $\{\Gamma_{t}\}_{t \in (-\infty,0)}$, that we have constructed in Lemma~\ref{constructedancientsolution} as a limit of old-but-not-ancient solutions $\{\Gamma_{t}^{\rho}\}_{t \in [-\alpha_{\rho},0)}$ with initial condition $\Gamma^{\rho}_{\alpha_{\rho}} = \Gamma^{\rho}$ as defined in Lemma \ref{lem:angenant ovals existence}. We shall aim to show that as a graph, $\lim_{t \to -\infty}e^{-\lambda_{0}^{2}t}y(x,t)$ exists in $(0,\infty)$ for all $x \in (-1,1)$. This asymptotic behaviour will be used to show uniqueness in the next section. The argument presented in this section follows the general idea of that in \cite[Section 2.4]{classification}.
However, in this case we have to account for the lack of symmetry. For instance, the point for which the minimum curvature occurs  for some time-slice of our constructed solution need not occur on the $y$-axis. Moreover, the boundary maximum principles become more complicated as the relation $\k_{s} = \k$ is no longer valid. In order to deal with the fact that $\partial\Omega$ doesn't have constant curvature, we use a Taylor expansion argument around $D$.

\begin{lem}\label{supportfunctionbound}
    For all sufficiently negative time, 
    \[
        |\langle \gamma, \nu \rangle| \leq \Lambda \k
    \]
    for some positive constant $\Lambda > 0$.
\end{lem}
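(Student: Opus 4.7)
My plan is to prove the bound by propagating it from the initial data on the old-but-not-ancient approximants $\{\Gamma^\rho_t\}$ forward in time via the parabolic maximum principle, and then taking $\rho \to 0$. Writing $\sigma := \langle\gamma,\nu\rangle$, consider the two quantities $Z_\pm := \pm\sigma - \Lambda\kappa$ for $\Lambda > 0$ to be chosen (large, but independent of $\rho$). From the standard evolution equations $\partial_t\kappa = \kappa_{ss}+\kappa^3$ and $\partial_t\sigma = \sigma_{ss}+\kappa^2\sigma - 2\kappa$, a short calculation yields
\[
\partial_t Z_\pm = (Z_\pm)_{ss} + \kappa^2 Z_\pm \mp 2\kappa\,.
\]

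To set up the maximum principle I need $Z_\pm \leq 0$ on the parabolic boundary. On the initial slice $\Gamma^\rho_{\alpha_\rho} = A^{\lambda_\rho,\xi_\rho}_{t_\rho}$ I use the explicit formulas for the Angenent oval, along which $|\sigma|$ and $\kappa$ vanish at comparable rates (the ratio $|\sigma|/\kappa$ is bounded uniformly, tending to something of order $1/\lambda_0^2$ as $\rho \to 0$), to fix $\Lambda$ uniformly in $\rho$. On the lateral boundary $q^\pm(t) = \Phi(\pm\tfrac{\pi}{2} + O(\theta_\pm(t)))$, Lemma~\ref{gradientbound} gives $\theta_\pm(t) \to 0$ exponentially, and Taylor-expanding $\Phi$ and $\nu^\Omega$ about $\pm\tfrac{\pi}{2}$ yields $|\sigma(q^\pm(t),t)| = O(\theta_\pm(t))$, while $\kappa(q^\pm(t),t)$ stays uniformly bounded below (it tends to $\lambda_0>0$ in the Angenent limit). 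Hence $Z_\pm(q^\pm(t),t) \leq 0$ for all sufficiently negative $t$, uniformly in $\rho$.

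With these controls, $Z_+ \leq 0$ follows immediately from the maximum principle because at any interior point where $Z_+ = 0$ the favorable $-2\kappa$ term forces $\partial_t Z_+ < 0$, so a zero interior maximum cannot become positive. The propagation of $Z_- \leq 0$ is more delicate due to the unfavorable $+2\kappa$, and this is the main obstacle of the argument. To close it I appeal to the boundary relations $\sigma_s = \kappa\langle\gamma,\tau\rangle$ and $\kappa_s = \mp\kappa\kappa^\Omega$ at $q^\pm$, together with a Taylor expansion of $\kappa^\Omega$ around $\pm e_1$, to control $\partial_s Z_-$ at the boundary and combine this with the uniform $L^\infty$ bound on $\kappa$ furnished by Stahl's estimates — possibly after replacing $Z_-$ by an additive or exponential modification that trades the $+2\kappa$ term against the existing $\kappa^2 Z_-$ coefficient. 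Passing the resulting bound through the limit $\rho \to 0$ gives the claim. The disk case in \cite{classification} avoids this difficulty via reflection symmetry and the clean identity $\kappa_s = \pm\kappa$ on $S^1$; in our non-symmetric setting the nonconstancy of $\kappa^\Omega$ forces the Taylor expansion around $D$ foreshadowed at the start of this section.
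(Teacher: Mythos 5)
Your boundary control rests on a false claim, and this is where the real difficulty of the lemma lives. At the free boundary points $q^{\pm}(t)$ the curvature does \emph{not} stay uniformly bounded below, and it certainly does not tend to $\lambda_0$: since the curves converge to the straight diameter $D$ as $t\to-\infty$, $\kappa\to 0$ uniformly (in fact exponentially, cf.\ Lemma~\ref{curvatureestimatesfortheheight}), and on the initial Angenent slices one has $\kappa=\lambda\tan(\lambda y)\cos\theta=O(\rho)$ at the boundary; the scale $\lambda_0$ only appears in the \emph{ratio} $\kappa/y\to\lambda_0^2$, not in $\kappa$ itself. So at $q^{\pm}$ both $|\langle\gamma,\nu\rangle|$ and $\kappa$ vanish at comparable rates, and your ``numerator $O(\theta_\pm)$, denominator bounded below'' argument collapses; what is needed is a quantitative comparison of the two rates, which the paper extracts by L'Hopital at the boundary, namely $|\sin\theta|/y\to\kappa^{\Omega}(\pm e_1)$ and $y/\kappa\to 1/(\kappa^{\Omega})^{2}$, the latter using the boundary identity $|\kappa_s|=\kappa\kappa^{\Omega}$ (i.e.\ $\kappa_\theta=\kappa^{\Omega}$). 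Second, the half of your estimate with the unfavorable sign, $Z_-\le 0$, is never actually closed: you identify the $+2\kappa$ term as the main obstacle and only gesture at ``an additive or exponential modification,'' so the argument is incomplete exactly where it is hardest.

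For comparison, the paper takes a different route that sidesteps both problems: it proves the two scalar bounds $x\sin\theta\le C_1 y$ and $y\le C_2\kappa$ separately, each by a maximum principle applied \emph{directly to the ancient solution}, with the backwards limit $t\to-\infty$ (where both quantities vanish) playing the role of the initial condition and the L'Hopital asymptotics above controlling $q^{\pm}$; then $|\langle\gamma,\nu\rangle|=|x\sin\theta-y\cos\theta|\le x\sin\theta+y\le(C_1+1)C_2\kappa$. Both model quantities have favorable interior evolution, so the troublesome $-2\kappa$ term in the support-function equation never has to be fought. Note also that because the paper's proof makes no use of the approximants, the lemma holds for \emph{any} ancient solution (Remark~\ref{notdependentonconstructedsolution}), which is needed later for uniqueness; your propagation-from-initial-data scheme, even if repaired at the boundary and for $Z_-$, would only cover the constructed solution.
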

\begin{proof}
    Parameterise $\gamma$ via turning angle as in Lemma \ref{gradientbound}. First we show that for sufficiently negative times, 
    \begin{equation}\label{eq:xsin bound}
    	x\sin\theta \leq C_{1}y
    \end{equation}
     for some constant $C_{1}$. At the boundary we have 
    \begin{equation}\label{eq: y of theta at the boundary}
    y(\theta) = \langle\Phi(\theta +\frac{\pi}{2}),e_{1}\rangle = \int_{\frac{\pi}{2}}^{\frac{\pi}{2}+ \theta} \frac{\sin{u}}{\k^{\Omega}(u)}du,
    \end{equation}
    where we are using the angle parametrisation $\Phi$ as defined by \eqref{eq:boundary paramatrisation}.
    Therefore, after an application of L'Hopital's rule 
    \[
        \frac{x\sin\theta}{y} \leq \frac{|\sin\theta|}{y} \overset{t \to -\infty}{\to}\k^{\Omega}(\pm e_{1}).
    \]
    Therefore, we can pick $C_{1}$ so that the \eqref{eq:xsin bound} is true at the boundary for $t$ sufficiently negative. In the interior, provided that $t << 0$, we can estimate $\k < 1$, since $\Gamma_{t}$ converges to $D$ as $t \to -\infty$, and so
    \begin{align*}
        (\partial_{t} - \Delta)(x\sin\theta - C_{1}y) &= x\k^{2}\sin\theta - 2\k\cos^{2}\theta \\
        &= \k\left(x\k\sin\theta - 2\cos^{2}\theta  \right) \\
        &\leq \k(|\sin\theta| - \cos^{2}\theta) \\
        &\leq 0,
    \end{align*}
    provided $|\theta| \leq \sin^{-1}\left({\frac{-1 + \sqrt{5}}{2}}\right)$, where we have used the estimate $x\k < 1$ in the penultimate inequality.  The maximum principle, paired with the fact that $\lim_{t \to -\infty}(x\sin\theta - C_{1}y) = 0$, proves \eqref{eq:xsin bound}.
    Next, we claim that 
    \begin{equation}\label{eq:y bound}
    	y \leq C_{2}\k
    \end{equation}
    for some constant $C_{2}$, from which the Lemma follows since 
    \[
        |\langle \gamma, \nu \rangle| = |x\sin\theta - y\cos\theta| \leq x\sin\theta + y \leq (C_{1}+1)C_{2}\k.
    \]
    To prove \eqref{eq:y bound}, note that at the boundary
    \[
    	\lim_{t \to -\infty} \frac{y}{\k} = \frac{1}{(\k^{\Omega})^{2}},
    \]
    which follows once again after an application of L'Hopital's rule, where we use the fact that at the boundary $\k_{\theta} = \k^{\Omega}$, and \eqref{eq: y of theta at the boundary}.
    Therefore, we can find a constant $C_{2}$ so that \eqref{eq:y bound} is true at the boundary. In the interior, 
    \[
    	(\partial_{t} - \Delta)(y-C_{2}\k) = -C_{2}\kappa^{3} < 0,
    \]
    and so the maximum principle along with the fact that $\lim_{t \to -\infty}(y-C_{2}\k) = 0$ proves \eqref{eq:y bound}.
\end{proof}

Next we will prove curvature estimates, which will allow us to obtain estimates of the height function. As introduced in Section 2, we will use the notation $\overline\k_{\pm}$, $\theta_{\pm}$, but now for the ancient solution $\{\Gamma_{t}\}_{t \in (-\infty,0)}$ instead of the old-but-not-ancient solutions $\{\Gamma_{t}^{\rho}\}_{t \in [\alpha_{\rho,0})}$. Similarly, we define $\overline{\k} = \max\{\overline\k_{+},\overline{\k_{-}}\}$.

\begin{lem}\label{curvatureboundlemma}
	There exists a constant $C$, so that for sufficiently negative time, 
	\[
		\underline{\k} \leq Ce^{rt},
	\]
	where $r$ is defined in \eqref{eq:assumptions on r}.
\end{lem}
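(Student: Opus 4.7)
The plan is to use the identity $\int_{\Gamma_t}\kappa\, ds = \theta_+(t) + \theta_-(t)$, valid because $\kappa\, ds = d\theta$ under the turning-angle parametrisation of the curve. Since $\underline\kappa$ is the minimum of $\kappa$ along $\Gamma_t$, integrating the pointwise inequality $\kappa \geq \underline\kappa$ gives
\[
\underline\kappa(t) \cdot L(\Gamma_t) \leq \int_{\Gamma_t}\kappa\, ds = \theta_+(t) + \theta_-(t),
\]
where $L(\Gamma_t)$ denotes the length of $\Gamma_t$. The lemma thus reduces to two ingredients: an exponential decay bound on $\theta_+(t)+\theta_-(t)$ and a uniform positive lower bound on $L(\Gamma_t)$.

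The exponential decay of $\theta_++\theta_-$ follows from Lemma \ref{gradientbound}. That lemma is formally stated for the old-but-not-ancient approximating solutions $\{\Gamma_t^\rho\}$, but inspection of its proof shows that the constant $C$ depends only on $\Omega$ and $r$, and the threshold time $t_0$ is bounded uniformly in $\rho$ by Remark \ref{boundsont0}. Consequently, the estimate $\sin\!\left(\tfrac{\theta_+(t) + \theta_-(t)}{2}\right) \leq Ce^{rt}$ persists under the smooth limit $\Gamma_t^\rho \to \Gamma_t$ on compact subsets of time used in Proposition \ref{constructedancientsolution}. Since $\Gamma_t \to D$ as $t\to -\infty$, the total turning $\theta_+(t)+\theta_-(t)$ tends to $0$, so the elementary inequality $\sin\phi \geq \phi/\pi$ on $[0,\pi]$ upgrades this to $\theta_+(t)+\theta_-(t) \leq \tilde C e^{rt}$ for sufficiently negative $t$.

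The length lower bound is even more elementary: by Proposition \ref{constructedancientsolution}, $\Gamma_t$ converges to the diameter $D$ as $t\to-\infty$, and in particular the endpoints satisfy $q^\pm(t) \to \pm e_1$. Hence $L(\Gamma_t) \geq |q^+(t)-q^-(t)| \geq 1$ for all sufficiently negative $t$. Combining with the previous step,
\[
\underline\kappa(t) \leq \frac{\theta_+(t) + \theta_-(t)}{L(\Gamma_t)} \leq \tilde C e^{rt},
\]
which is the desired estimate.

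The argument is short and I do not anticipate a genuine obstacle. The only delicate point is ensuring that Lemma \ref{gradientbound} descends from the approximators $\{\Gamma_t^\rho\}$ to the ancient solution $\{\Gamma_t\}$, and this is precisely what the uniform-in-$\rho$ bound for $t_0$ in Remark \ref{boundsont0}, together with the smooth convergence on compact time intervals established in Proposition \ref{constructedancientsolution}, is designed to provide.
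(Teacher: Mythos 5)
Your proof is correct and follows essentially the same route as the paper's: both bound $\underline{\k}$ by the total turning $\theta_{+}+\theta_{-}$ divided by a spatial extent that is bounded below for very negative times (the paper integrates $\cos\theta/\k$ to control the horizontal width $\langle q_{+}-q_{-},e_{1}\rangle$, you use $\underline{\k}\,L(\Gamma_{t})\leq\int_{\Gamma_{t}}\k\,ds$ together with $L\geq|q^{+}-q^{-}|$), and then conclude via Lemma~\ref{gradientbound}. One cosmetic slip: $\sin\phi\geq\phi/\pi$ is false near $\phi=\pi$, but since $\theta_{+}+\theta_{-}\to0$ you only need such an inequality for small angles (and in fact the sine bound alone already suffices, as in the paper), so the argument stands.
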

\begin{proof}
	Let $t < 0$, and let $q_{+}, q_{-}$ and $p$ be defined by \eqref{eq: definition of p} and \eqref{eq: definition of q}. Then,
	\[
		\langle q_{+} - p, e_{1} \rangle = \int_{\underline{\theta}}^{\theta_{+}}\frac{\cos(u)}{\k}du \leq \frac{\sin \theta_{+} - \sin \underline{\theta}}{\underline{\k}}
	\]
	and
	\[
		\langle p - q_{-}, e_{1} \rangle = \int^{\underline{\theta}}_{-\theta_{-}}\frac{\cos(u)}{\k}du \leq \frac{\sin \underline{\theta} + \sin \theta_{-}}{\underline{\k}}.
	\]
	After adding these inequalities, we obtain
	\[
		\underline{\k} \leq \frac{\sin\theta_{+} + \sin\theta_{-}}{\langle q_+ - q_-, e_{1} \rangle} \leq C\sin\left(\frac{\theta_{+} + \theta_{-}}{2}\right)
	\]
	for sufficiently negative time. The result then follows from Lemma~\ref{gradientbound}.
\end{proof}

Lemma \ref{curvatureboundlemma} allows us to obtain the following sharper estimates.

\begin{lem}\label{curvatureestimatesfortheheight}
    There exist constants $C_{1},C_{2}$ so that for sufficiently negative time 
    \[
        \overline{\k} \leq C_{1}e^{rt} 
    \]
    and
    \[
        |\k_{s}| \leq C_{2}\k,
    \]
    where $r$ is as defined in \eqref{eq:assumptions on r}.
\end{lem}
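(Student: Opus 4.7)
My plan is to first establish the gradient estimate $|\k_s|\leq C_2\k$ via a maximum principle argument in turning-angle coordinates, and then deduce $\overline\k\leq C_1 e^{rt}$ by integrating this gradient bound across $\Gamma_t$ and invoking Lemma \ref{curvatureboundlemma}.

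Parameterize each $\Gamma_t$ by turning angle $\theta\in[-\theta_-(t),\theta_+(t)]$; since $ds/d\theta=1/\k$, we have $\k_s/\k=\k_\theta$, so it suffices to bound $|\k_\theta|\leq C_2$. At the boundary points, the free boundary identity $|\k_s|=\k\,\k^\Omega$ (as used in the proof of Lemma \ref{gradientbound}) combined with \eqref{eq:assumptions on r} gives $|\k_\theta|\leq (4r)^{-1}$. In the interior, differentiating the Gage--Hamilton equation $\k_t=\k^2(\k_{\theta\theta}+\k)$ in $\theta$ yields
\[
g_t = \k^2 g_{\theta\theta} + 2\k g\,g_\theta + 3\k^2 g, \qquad g:=\k_\theta,
\]
and for $f:=g^2$ one computes
\[
f_t - \k^2 f_{\theta\theta} = -2\k^2 g_\theta^2 + 4\k g^2 g_\theta + 6\k^2 f.
\]
At an interior spatial maximum of $f$ with $g\neq 0$, the first-order condition gives $g_\theta=0$ and the second-order condition gives $f_{\theta\theta}\leq 0$, so $f_t\leq 6\k^2 f$.

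To integrate this ODE inequality back to $t=-\infty$, I would work on the old-but-not-ancient approximants $\{\Gamma^\rho_t\}_{t\in[\alpha_\rho,0)}$: on each, $\k^\rho$ has a unique critical point (its minimum) by Lemma \ref{oldbutnotancientsolutions}(2), so $\k^\rho_\theta$ has a definite sign on either side of that zero, and the maximum of $f^\rho$ is either a boundary value (bounded by $(4r)^{-2}$) or an interior critical value. The Angenent-oval initial datum of Lemma \ref{lem:angenant ovals existence} provides a uniform-in-$\rho$ bound on $f^\rho(\cdot,\alpha_\rho)$, and Lemma \ref{curvatureboundlemma} together with Stahl's interior curvature estimates gives uniform-in-$\rho$ control of $\int_{-\infty}^{T_0}(\overline{\k}^\rho)^2\,dt$. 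Integrating the ODE inequality and passing to the limit $\rho\to 0$ then gives $|\k_\theta|\leq C_2$ on the ancient solution for $t\leq T_0$, establishing (b).

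Given (b), (a) is routine. By Lemma \ref{gradientbound}, $\theta_+(t)+\theta_-(t)\to 0$ as $t\to-\infty$, so for $t$ sufficiently negative $\Gamma_t$ is a graph over the $x$-axis with $|\tan\theta|\leq 1$, yielding the arclength bound $L_t\leq 2\sec(\theta_++\theta_-)\leq 4$. Then $|(\log\k)_s|=|\k_s/\k|\leq C_2$ integrates to $\log\overline\k(t)-\log\underline\k(t)\leq C_2 L_t\leq 4C_2$, and Lemma \ref{curvatureboundlemma} finishes: $\overline\k(t)\leq e^{4C_2}\underline\k(t)\leq C_1 e^{rt}$.

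The main technical obstacle is closing the uniform-in-$\rho$ estimate for $f^\rho$ in the middle paragraph: the growth rate $6\k^2$ in $f_t\leq 6\k^2 f$ must be integrable in $t$ over $(-\infty,T_0]$ uniformly in $\rho$, which is in some sense precisely the decay (a) asserts for the ancient limit. Resolving this circularity is what forces one to exploit the unique-critical-point structure of $\k^\rho$ and the fine Angenent-oval asymptotics from Lemma \ref{lem:angenant ovals existence}, rather than running a maximum principle on the ancient solution directly.
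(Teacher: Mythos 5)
Your reduction of the first estimate to the second (integrating $(\log\k)_s\leq C_2$ over the curve and invoking Lemma~\ref{curvatureboundlemma}) is exactly the paper's argument and is fine. The problem is that the second estimate, which is the real content of the lemma, is not actually proved. Your maximum principle on $f=\k_\theta^2$ only yields $f_t\leq 6\k^2 f$ at the moving maximum, so closing the argument requires a Gronwall factor $\exp\bigl(6\int\k^2\,dt\bigr)$ that is uniformly controlled; since $\k$ at the maximum point can be as large as $\overline\k$, this needs a uniform-in-$\rho$ bound on $\int_{\alpha_\rho}^{T_0}(\overline{\k}^\rho)^2\,dt$, which is essentially statement (a) itself. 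Your claim that Lemma~\ref{curvatureboundlemma} plus Stahl's interior estimates supplies this is unfounded: Lemma~\ref{curvatureboundlemma} only bounds $\underline{\k}$, and Stahl's estimates give curvature bounds, not exponential decay of $\overline\k$ backwards in time. You acknowledge the circularity, but the proposed escape (``exploit the unique-critical-point structure and the Angenent-oval asymptotics'') is a gesture, not an argument --- the unique critical point of $\k^\rho$ tells you where $\k_\theta$ vanishes, but gives no quantitative control on the exponential growth factor. So the core step is open.

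The paper closes precisely this gap by changing the quantity: instead of $\k_\theta$ it runs the maximum principle (directly on the ancient solution, with an $\varepsilon(e^t+1)$ perturbation so that the quantity is strictly negative as $t\to-\infty$) on
\[
f_\varepsilon=|\k_s|-C\k+\langle\gamma,\nu\rangle-\varepsilon(e^t+1).
\]
The point is that the support function satisfies $(\partial_t-\Delta)\langle\gamma,\nu\rangle=\k^2\langle\gamma,\nu\rangle-2\k$, so at a first interior zero of $f_\varepsilon$ all the bad terms are cubic or quadratic in $\k$ (estimated via Lemma~\ref{supportfunctionbound}, $|\langle\gamma,\nu\rangle|\leq\Lambda\k$), while the term $-2\k$ is linear; since $\k\to0$ uniformly as $t\to-\infty$, the linear term dominates for sufficiently negative time and yields a contradiction pointwise at the first violation, with no time-integrability of $\k^2$ needed. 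The boundary is handled by $|\k_s|=\k\k^\Omega$ and choosing $C>\sup\k^\Omega+\Lambda$, and the conclusion $|\k_s|\leq C_2\k$ then follows from Lemma~\ref{supportfunctionbound} again. If you want to salvage your approach you would need to inject a comparable good linear term into the evolution of $\k_\theta^2$; as written, the argument does not close.
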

\begin{proof}
    First note that the first inequality follows from the second; if $|\k_{s}| \leq C_{2}\k$, then $(\log\k)_{s} \leq C_{2}$. By integrating from the point of minimum curvature to the point of maximum curvature and noting that $\text{Length}(\Gamma_{t}) \leq 2$, we have for small enough $t$
    \[
        2C_{2} \geq \log\frac{\overline{\k}}{\underline{\k}} ,
    \]
    which implies $\overline{\k} \leq e^{2C_{2}}\underline{\k}$. The first inequality then follows from Lemma~\ref{curvatureboundlemma}. For the second inequality, by Lemma~\ref{supportfunctionbound}, it suffices to show that $|\k_{s}| - C\k + \langle \gamma, \nu \rangle \leq 0$ for some constant $C$. For each $0 < \varepsilon \leq 1$, define a function
    \[
        f_{\varepsilon} = |\k_{s}| - C\k + \langle \gamma, \nu \rangle - \varepsilon(e^{t} + 1).
    \]
    Clearly $\lim_{t \to -\infty} f_{\varepsilon} = -\varepsilon < 0$. We will show that for sufficiently negative times, independent of $\varepsilon$, $f_{\varepsilon}$ remains negative. First of all, we can ensure that $f_{\varepsilon} < 0$ at the boundary, since, by using Lemma~\ref{supportfunctionbound}, 
    \begin{align*}
        f_{\varepsilon} &\leq \k\k^{\Omega} - C\k + \langle \gamma, \nu \rangle \\
        &\leq \k(\k^{\Omega} - C + \Lambda),
    \end{align*}
    and so we can pick $C$ to ensure that $f_{\varepsilon}$ is always negative on the boundary, and we also ensure that $3C + \Lambda > 1$. Now assume there exists a first time at which $f_{\varepsilon} = 0$ at some point. Such a point must be in the interior and moreover, since $C > \Lambda$, at that point we must have $\k_{s} \neq 0$ and without loss of generality we assume $\k_{s} > 0$. If we denote by $T_{C}$ the time for which $\k < \frac{1}{2(3C+\Lambda)}$ for all $t < T_{C}$, then for all $t < T_{C}$,  at the first point for which $f_{\varepsilon} = 0$, we have
    \begin{align*}
        0 &\leq (\partial_{t} - \Delta)f_{\varepsilon}  \\ 
        &\leq 4\k^{2}\k_{s} - C\k^{3} + \k^{2}\langle \gamma, \nu \rangle - 2\k - \varepsilon e^{t} \\
        &\leq 4\k^{2}(C\k - \langle \gamma, \nu \rangle + \varepsilon(e^{t} + 1)) - C\k^{3} + \k^{2}\langle \gamma, \nu \rangle - 2\k - \varepsilon e^{t} \\
        &= 3C\k^{3} + 4\k^{2}\varepsilon + (4\k^{2} - 1)\varepsilon e^{t} - 3\k^{2}\langle \gamma, \nu \rangle - 2\k \\
        &\leq (3C + \Lambda)\k^{3} + 4\k^{2} + (4\k^{2} - 1)\varepsilon e^{t} - 2\k \\ 
        &< 0
    \end{align*}
    for sufficiently negative time, where we have used used the Lemma~\ref{supportfunctionbound} in the penultimate inequality above. This is a contradiction which completes the proof.
\end{proof}

\begin{remark}\label{notdependentonconstructedsolution}
	Notice that proof of Lemmas \ref{supportfunctionbound}, \ref{curvatureboundlemma} and \ref{curvatureestimatesfortheheight} did not depend on the solution constructed in Lemma \ref{constructedancientsolution} and are therefore true for \emph{any} ancient solution to the free boundary curve shortening flow in $\Omega$.
\end{remark}

With these curvature estimates in mind, we are ready to prove the following height estimates. These height estimates do depend on the particular ancient solution constructed in Lemma \ref{constructedancientsolution}.

\begin{lem}\label{height1}
    There exists a positive constant $n$ depending only on the boundary curve $\Omega$ such that the ancient solution $\{\Gamma_{t}\}_{t \in (-\infty,0)}$ satisfies
    \[
        \frac{\k}{y}e^{ny} \geq \lambda_{0}^{2} - ne^{rt}
    \]
    for sufficiently negative time.
\end{lem}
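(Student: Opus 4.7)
The plan is to prove the bound by a contradiction argument based on the maximum principle applied to the auxiliary function
\[
G := \frac{\kappa}{y}e^{ny} + ne^{rt} - \lambda_0^2,
\]
showing $G \geq 0$ on $\Gamma_t$ for all sufficiently negative $t$, provided $n > 0$ is chosen large enough depending only on $\partial\Omega$. The choice of $G$ is motivated by the Angenent oval $A^{\lambda_0,\xi_0}$, whose curvature satisfies $\kappa = \lambda_0 \sin(\lambda_0 y)/\sqrt{1 - e^{2\lambda_0^2 t}}$: a Taylor expansion $\sin(\lambda_0 y) \geq \lambda_0 y - (\lambda_0 y)^3/6$ paired with $e^{ny} \geq 1 + ny$ gives $(\kappa/y)e^{ny} \geq \lambda_0^2$ for $y$ small (the exponential correction precisely absorbs the leading $O(y^2)$ deficit), while the $ne^{rt}$ term is a safety margin for the deviation of $\Gamma_t$ from an exact Angenent oval. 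First I would verify the asymptotic initial condition $\liminf_{t \to -\infty}\inf_{\Gamma_t} G \geq 0$, which follows from the construction of $\{\Gamma_t\}$ as a smooth subsequential limit of old-but-not-ancient solutions emanating from $A^{\lambda_\rho,\xi_\rho}_{t_\rho}$ with $\lambda_\rho \to \lambda_0$ (Lemma~\ref{lem:angenant ovals existence}), combined with the curvature estimates of Lemmas~\ref{supportfunctionbound}, \ref{curvatureboundlemma}, and \ref{curvatureestimatesfortheheight}. Consequently, if $G$ fails to be nonnegative, there is a first contact $(p, t_*)$ with $G(p, t_*) = 0$.

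Using the identities $(\partial_t - \Delta)\kappa = \kappa^3$, $(\partial_t - \Delta)y = 0$, $|\nabla y| = |\sin\theta|$, and $(\partial_t - \Delta)e^{ny} = -n^2 e^{ny}\sin^2\theta$, together with the quotient rule, compute the evolution of $G$. At the first contact point $\nabla G = 0$, which forces $\kappa_s = \kappa\sin\theta(1/y - n)$; substituting this and collecting terms yields
\[
(\partial_t - \Delta)G\big|_{\nabla G = 0} = \frac{\kappa e^{ny}}{y}\Big[\kappa^2 + n^2\sin^2\theta - \tfrac{2n\sin^2\theta}{y}\Big] + nr e^{rt}.
\]
If $p$ lies in the interior of $\Gamma_{t_*}$, the minimum principle gives $(\partial_t - \Delta)G \leq 0$. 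Combining the curvature bound $\kappa \leq C_1 e^{rt}$ from Lemma~\ref{curvatureestimatesfortheheight}, the turning-angle bound $|\sin\theta| \leq Ce^{rt}$ from Lemma~\ref{gradientbound}, and the height control from Lemma~\ref{supportfunctionbound} to estimate each term in the bracket, I would choose $n$ large enough (depending only on $\partial\Omega$) that the $nre^{rt}$ contribution strictly dominates; this forces $(\partial_t - \Delta)G > 0$, a contradiction.

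If instead $p \in \partial\Omega$, apply the Hopf boundary lemma, which demands $\partial_\nu G > 0$ strictly at the contact. I would compute this inward-normal derivative using the free-boundary relation $\kappa_s = \kappa\kappa^\Omega$ and a Taylor expansion of $\partial\Omega$ about the relevant endpoint $\pm e_1$, and then use both $\lambda_0 > \max\{\kappa^\Omega(e_1), \kappa^\Omega(-e_1)\}$ and the defining equation for $\lambda_0$ from Lemma~\ref{lem:angenant ovals existence} to verify strict positivity of $\partial_\nu G$ for $n$ sufficiently large. The main difficulty lies in this calibration: unlike the reflection-symmetric disk case of~\cite{classification}, the Taylor expansions of $\partial\Omega$ about the two diameter endpoints $\pm e_1$ must be tracked independently, and a single constant $n$ is required to simultaneously accommodate the boundary deficits at both ends without destroying the interior estimate. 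The mechanism that makes such a uniform choice possible is the exponential factor $e^{ny}$, which provides precisely the $O(y)$ boost needed to overcome the asymptotic discrepancy between $\kappa/y$ and $\lambda_0^2$ near $\partial\Omega$.
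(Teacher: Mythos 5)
Your critical-point identity $\kappa_s=\kappa\sin\theta(1/y-n)$ and the resulting formula for $(\partial_t-\Delta)G$ at a spatial critical point are correct, but the argument has a genuine gap at its anchor: the ``asymptotic initial condition'' $\liminf_{t\to-\infty}\inf_{\Gamma_t}G\ge 0$. Since $ne^{rt}\to 0$ and $e^{ny}\to 1$ as $t\to-\infty$, this condition asserts precisely $\liminf_{t\to-\infty}\inf_{\Gamma_t}\kappa/y\ \ge\ \lambda_0^2$, which is (up to the exponential rate) the statement of the lemma itself, and it does not follow from the ingredients you cite. The convergence $\Gamma^\rho_t\to\Gamma_t$ in Proposition~\ref{constructedancientsolution} is smooth only on compact subsets of time, whereas the Angenent-oval initial data sit at times $\alpha_\rho\to-\infty$; so the initial condition of the approximators transfers no information about the $t\to-\infty$ behaviour of the limit flow. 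Likewise, Lemmas~\ref{supportfunctionbound}, \ref{curvatureboundlemma} and \ref{curvatureestimatesfortheheight} give at best $\kappa/y\ge 1/C_2$ with a non-sharp constant, not the sharp constant $\lambda_0^2$. Without this anchor there is not even a well-defined ``first contact'' time on an ancient solution, so the contradiction scheme cannot start. The paper's proof avoids exactly this circularity by running the first-violation argument on each old-but-not-ancient solution $\{\Gamma^\rho_t\}_{t\in[\alpha_\rho,0)}$, which has an honest initial time at which the inequality (with $\lambda_\rho$ in place of $\lambda_0$) is verified explicitly on the Angenent-oval initial curve via $\kappa=\lambda\tan(\lambda y)\cos\theta\ge\lambda^2 y\cos\theta$ together with Lemma~\ref{gradientbound}; the constants are uniform in $\rho$, and only afterwards does one let $\lambda_\rho\to\lambda_0$ to transfer the bound to the constructed ancient solution. (This is also why the height estimate is stated only for the constructed solution, unlike Lemmas~\ref{supportfunctionbound}--\ref{curvatureestimatesfortheheight}.)

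Two further calibration points. In the interior case, ``choosing $n$ large so that $nre^{rt}$ dominates'' cannot work as stated, because the dangerous bracket term $-2n\sin^2\theta/y$ scales linearly in $n$ as well; largeness of $n$ buys nothing there, and Lemma~\ref{supportfunctionbound} (i.e.\ $x\sin\theta\le C_1y$, $y\le C_2\kappa$) does not control $\sin^2\theta/y$ uniformly in space since the first bound degenerates near $x=0$. The control must come, as in the paper, from combining the critical-point identity with $|\kappa_s|\le C_2\kappa$ and $\sin\theta\le Ce^{rt}$; the size of $n$ is dictated by the boundary step and by the requirement $nr>2C_2$, not by interior domination. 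Finally, at the boundary the Hopf computation does not involve $\lambda_0$ at all: one finds $\bigl(\tfrac{\kappa}{y}e^{ny}\bigr)_s=\tfrac{\kappa}{y}e^{ny}\bigl(\kappa^\Omega-\tfrac{\sin\theta}{y}+n\sin\theta\bigr)$, and positivity follows purely from the Taylor expansion $y(\theta)=\theta/\kappa^\Omega_{+}+O(\theta^2)$ of $\partial\Omega$ near $\pm e_1$ together with $n>C$; neither $\lambda_0>\max\{\kappa^\Omega(e_1),\kappa^\Omega(-e_1)\}$ nor the defining equation for $\lambda_0$ from Lemma~\ref{lem:angenant ovals existence} enters at that point --- $\lambda_0$ enters only through the initial-time verification on the Angenent ovals, which is the step your outline is missing.
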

%The proof can probably be changed to do initial curve first, then interior and then boundary.
\begin{proof}
    We will prove that $\frac{\k}{y}e^{ny} \geq \lambda^{2} - ne^{ct}$ on each old-but-not-ancient solution $\{\Gamma_{t}^{\lambda}\}_{t \in (\alpha_{t}^{\lambda},0)}$ for $\lambda$ sufficiently close to $\lambda_{0}$. Indeed, on the initial curve we have 
    \[
    	\frac{\k}{y}e^{ny} \geq \lambda^{2}\cos\theta \geq \lambda^{2}(1-\sin^{2}\theta) \geq \lambda^{2}-Ce^{2rt},
    \]
    where we have used Lemma~\ref{gradientbound} in the last inequality. Thus, the bound is always true at the initial curve, for sufficiently large $-\alpha_{t}^{\lambda}$, provided $n \geq C$. 
    We will now show that the bound remains true during the flow for at least a small period of time independent of $\lambda$. Presume there is a first time in which $\frac{\k}{y}e^{ny} + ne^{rt} = \lambda^{2}$. If such a point were to occur in the interior, then
    \[
    	\left(\frac{\k}{y}e^{ny}\right)_{s} = 0,
    \]
    which means 
    \[
    	\frac{(\k e^{ny})y_{s}^{2}}{y^{3}} = \frac{(\k e^{ny})_{s}y_{s}}{y^{2}}
    \]
    at such a point. Therefore
    \begin{align*}
    	0 &\geq (\partial_{t} - \Delta)\left(\frac{\k}{y}e^{ny} + ne^{rt}\right) \\
    	&= \frac{1}{y}(\partial_{t} - \Delta)\left(\k e^{ny}\right) + nc e^{rt} \\
		&\geq \k ne^{ny}\left(-n\sin^{2}\theta - 2\frac{\k_{s}}{\k}\sin\theta\right) + nc e^{rt} \\
		&\geq \k ne^{ny}(-n\sin^{2}\theta - 2C_{2}\sin\theta) + nc e^{rt}
    \end{align*}
    for sufficiently negative time, where we have used Lemma~\ref{curvatureestimatesfortheheight} in the last inequality. Using Lemma~\ref{gradientbound}, we can estimate $\sin\theta \leq Ce^{rt}$ and so 
    \begin{align*}
		0 &\geq (\partial_{t} - \Delta)\left(\frac{\k}{y}e^{ny} + ne^{rt}\right) \\
		&\geq e^{rt}\left(nr - 2C_{2} - nC^{2}e^{rt}\right),
	\end{align*}
	which can be made positive for sufficiently negative time, provided we adjust $n$ so that $nr > 2C_{2}$. This is a contradiction and so the first time in which $\frac{\k}{y}e^{ny} + ne^{rt} = \lambda^{2}$ cannot happen in the interior. 
	Similarly, if we presume that this minimum occurs at the right-most boundary point, then the Hopf boundary point lemma implies
	\begin{equation}\label{eq:tayloresponsionterm}
		0 > \left(\frac{\k}{y}e^{ny} + ne^{rt}\right)_{s} = \frac{\k}{y}e^{ny}\left(\k^{\Omega} - \frac{\sin\theta}{y} + n\sin\theta\right).
	\end{equation}
    However, we claim the right-hand side can be made positive by choosing $n$ sufficiently large.
    Indeed, by considering angle parametrisation $\Phi(\theta)$ as defined by \eqref{eq:boundary paramatrisation}, and the Taylor expansion of the height function on the boundary curve, $\langle \Phi(\frac{\pi}{2} + \theta),e_{2}\rangle$, around $\theta = 0$, with respect to $\theta$, we obtain
    \[
    	y(\theta) = \frac{\theta}{\k^{\Omega}_{+}} + O(\theta^{2})
    \]
    from which, it follows
    \[
    	\k^{\Omega} - \frac{\sin\theta}{y} \geq -C\sin\theta
    \]
    for some constant $C$. Therefore, 
    \begin{align*}
    	0 &> \left(\frac{\k}{y}e^{ny} + ne^{ct}\right)_{s} \\
    	&= \frac{\k}{y}e^{ny}\left(\k^{\Omega} - \frac{\sin\theta}{y} + n\sin\theta\right) \\
    	&\geq 0,
    \end{align*}
    so by choosing $n > C$, and the claim follows. The same argument after reflecting about the $y$-axis demonstrates that this minimum cannot occur at the left-most boundary point either. 
	This completes the proof.
\end{proof}

\begin{lem}\label{height2}
	There exists a positive constant $m$ depending only on the boundary curve $\Omega$ such that the ancient solution $\{\Gamma_{t}\}_{t \in (-\infty,0)}$ satisfies
    \[
        \frac{\k}{y}e^{-ny} \leq \lambda_{0}^{2} + me^{2rt}
    \]
    for sufficiently negative time.
\end{lem}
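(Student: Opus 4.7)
The plan is to adapt the proof of Lemma~\ref{height1} essentially verbatim, with the sign changes forced by the replacement $e^{ny}\mapsto e^{-ny}$. I would again work at the level of the old-but-not-ancient approximants, proving $\frac{\k}{y}e^{-ny}\le \lambda_{\rho}^{2}+me^{2rt}$ on $\Gamma^{\rho}_t$ for $\rho$ sufficiently small, and then passing to the limit $\rho\to 0$. The verification on the initial curve $\Gamma^{\rho}=A^{\lambda_{\rho},\xi_{\rho}}_{t_{\rho}}$ is a direct computation: differentiating the Angenent-oval relation gives $\k=\lambda\sin(\lambda y)/\sqrt{1-e^{2\lambda^{2}t}}$, whence $\k/y=\lambda^{2}+O(y^{2}+e^{2\lambda^{2}t})$, and both correction terms are dominated by $me^{2rt_{\rho}}$ in view of the relation between $\rho$ and $t_{\rho}$ produced by Lemma~\ref{timetendsto-infty}.

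Assume, toward a contradiction, that $\frac{\k}{y}e^{-ny}-me^{2rt}$ first attains the value $\lambda_{\rho}^{2}$ at time $t^{*}$ and point $q^{*}$. In the interior case, $u:=\frac{\k}{y}e^{-ny}$ has a spatial maximum at $q^{*}$, so $u_{s}=0$ gives the identity $\k_{s}/\k=y_{s}(1/y+n)$; the standard heat-operator computation using $(\partial_{t}-\Delta)\k=\k^{3}$ and $(\partial_{t}-\Delta)y=0$ yields
\[
(\partial_{t}-\Delta)u\big|_{q^{*}}=u\bigl(\k^{2}+n^{2}y_{s}^{2}+2n\,y_{s}^{2}/y\bigr).
\]
Using the critical-point identity together with $|\k_{s}/\k|\le C_{2}$ from Lemma~\ref{curvatureestimatesfortheheight} gives $|y_{s}/y|\le C_{2}$, hence $y_{s}^{2}/y\le C_{2}^{2}y$; combined with the comparability $y\asymp \k$ that follows from Lemma~\ref{height1} and with $\k\le Ce^{rt}$ and $|y_{s}|\le Ce^{rt}$ from Lemmas~\ref{curvatureestimatesfortheheight} and \ref{gradientbound}, every term inside the parenthesis is controlled by a multiple of $e^{2rt}$. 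Choosing $m$ large relative to this constant gives $(\partial_{t}-\Delta)(u-me^{2rt})<0$ at $(q^{*},t^{*})$, contradicting the fact that $u-me^{2rt}$ just reached its spacetime maximum. If $q^{*}$ lies on $\partial\Omega$, the Hopf boundary point lemma forces $(\frac{\k}{y}e^{-ny})_{s}(q^{*})>0$; since $\k_{s}=\k\k^{\Omega}$ at the right-hand boundary point, one has
\[
\biggl(\frac{\k}{y}e^{-ny}\biggr)_{s}=\frac{\k}{y}e^{-ny}\biggl(\k^{\Omega}-\frac{\sin\theta}{y}-n\sin\theta\biggr),
\]
and the Taylor expansion of $y(\theta)$ about the diameter endpoints (exactly as in Lemma~\ref{height1}) gives $\k^{\Omega}-\sin\theta/y\le C\sin\theta$, so that the bracket is at most $(C-n)\sin\theta<0$ for $n>C$, contradicting Hopf. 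The left-hand boundary point is handled by the symmetric computation.

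The main obstacle is the interior step. Unlike in Lemma~\ref{height1}, the identity at the critical point produces $\k^{2}+n^{2}y_{s}^{2}+2n\,y_{s}^{2}/y$, in which all three terms are non-negative; there is no algebraic cancellation, and in particular the $y_{s}^{2}/y$ term is only $O(e^{rt})$ from the raw estimates of Section~3. Converting it to $O(e^{2rt})$—which is what forces the exponent $2r$ in the correction—requires first using the critical-point identity to trade $1/y$ for $\k_{s}/(\k y_{s})$, and then invoking the already-proven lower bound $\k/y\ge \lambda_{0}^{2}/2$ from Lemma~\ref{height1} to conclude $y\asymp\k\le Ce^{rt}$. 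This is the one place where the proof of Lemma~\ref{height2} genuinely uses Lemma~\ref{height1}, and it is the step that needs to be set up with care.
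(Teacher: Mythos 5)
Your setup, boundary (Hopf) case, and initial-slice verification all match the paper's strategy, and your interior critical-point identity is algebraically correct: at a spatial maximum of $u=\frac{\k}{y}e^{-ny}$ one indeed gets $(\partial_t-\Delta)u=u\bigl(\k^2+n^2y_s^2+2n\,y_s^2/y\bigr)$. But the way you propose to close the interior case does not work, and this is a genuine gap. For the contradiction against the barrier $\lambda_\rho^2+me^{2rt}$ you must show the reaction term is $O(e^{2rt})$ with a constant independent of $m$; an $O(e^{rt})$ bound is fatal, since $e^{rt}\gg e^{2rt}$ as $t\to-\infty$, so no choice of $m$ saves you. Your fix gives only $O(e^{rt})$: trading $1/y$ for $\k_s/(\k y_s)$ yields $y_s^2/y=y_s\k_s/\k-ny_s^2$, and the only available bound is $|\k_s|/\k\le C_2$ (Lemma \ref{curvatureestimatesfortheheight}), so this term is $\lesssim C_2|y_s|\lesssim e^{rt}$. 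The appeal to Lemma \ref{height1} points the wrong way: $\k/y\ge\lambda_0^2/2$ gives $y\le C\k$, i.e.\ an \emph{upper} bound on $y$, which does not control $1/y$; the reverse inequality $y\gtrsim\k$ is essentially what Lemma \ref{height2} asserts (it does hold at the contact point, but then $y_s^2/y\lesssim y_s^2/\k$, and there is no lower bound on $\k$ of order $e^{rt}$ available — $\k$ is expected to be of order $e^{\lambda_0^2t}\ll e^{rt}$ — so you still cannot reach $O(e^{2rt})$). Note also that the troublesome $1/y$ factor is an artifact of substituting the critical-point identity into the harmless cross term $2n\k_sy_s$; left alone, that term is bounded by $2nC_2\k|y_s|$, but that is again only $O(e^{rt})u$, so your pointwise argument with the $e^{2rt}$ correction still fails.

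The paper avoids this entirely by not running a pointwise first-touching argument. It writes the evolution of $\frac{\k}{y}e^{-ny}$ with the first-order part kept as a drift term $2\langle\nabla(\frac{\k}{y}e^{-ny}),\frac{\nabla y}{y}\rangle$, bounds the zeroth-order reaction coefficient by $\k^2\le C_1^2e^{2rt}$ (Lemma \ref{curvatureestimatesfortheheight}), rules out boundary maxima by the Hopf lemma exactly as you do, and then applies ODE comparison to $\max_{\Gamma_t}\frac{\k}{y}e^{-ny}$: the exponentially decaying coefficient integrates to a factor $1+Ce^{2rt}$, and the value on the initial slice is read off from the Angenent oval, $\frac{\k}{y}e^{-ny}=\frac{\lambda\tan(\lambda y)}{y}\cos\theta\,e^{-ny}$, before letting $\lambda\to\lambda_0$. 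If you want to keep your first-touching structure, the honest repair is to not substitute the critical-point identity, bound the cross term by $2nC_2\k|y_s|\lesssim e^{rt}$, and weaken the correction to $me^{rt}$ (which still suffices for Proposition \ref{importantprop}, where only integrability in $t$ of the error is used); as written, your argument does not prove the stated $e^{2rt}$ bound.
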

\begin{proof}
	We will prove the result on each old-but-not-ancient solution $\{\Gamma_{t}^{\lambda}\}_{t \in (\alpha_{t}^{\lambda},0)}$ where $\lambda$ is sufficiently close to $\lambda_{0}$. For sufficiently negative time we have
	\begin{align*}
		(\partial_{t} - \Delta)\left(\frac{\k}{y}e^{-ny}\right) &= \frac{1}{y}\left(\k^{3}e^{-ny} - \k n^{2}\sin^{2}\theta e^{-ny}\right) + 2\left\langle \nabla \left(\frac{\k}{y}e^{-ny}\right), \frac{\nabla y}{y}\right\rangle \\
		&\leq \k^{2}\left(\frac{\k}{y}e^{-ny}\right) + 2\left\langle \nabla \left(\frac{\k}{y}e^{-ny}\right), \frac{\nabla y}{y}\right\rangle \\
		&\leq C_{1}^{2}e^{2rt}\left(\frac{\k}{y}e^{-ny}\right) + 2\left\langle \nabla \left(\frac{\k}{y}e^{-ny}\right), \frac{\nabla y}{y}\right\rangle,
	\end{align*}
	where we have used Lemma~\ref{curvatureestimatesfortheheight} in the last inequality. At the boundary, we have 
	\[
		\left(\frac{\k}{y}e^{-ny}\right) \leq \frac{\k}{y}e^{-ny}\left(\k^{\Omega} - \frac{\sin\theta}{y} - n\sin\theta\right)
	\]
	which is negative by our choice of $n$ in the Lemma~\ref{height1}. Hence the Hopf boundary point lemma and the ODE comparison principle imply
	\[
		\max_{\Gamma_{t}}\frac{\k}{y}e^{-ny} \leq C\max_{\Gamma_{\alpha_{t}}}\frac{\k}{y}e^{-ny}.
	\]
	But now,
	\[
		(\partial_{t} - \Delta)\left(\frac{\k}{y}e^{-ny}\right) \leq Ce^{2rt}\max_{\Gamma_{\alpha_{t}}}\frac{\k}{y}e^{-ny} + 2\left\langle \nabla \left(\frac{\k}{y}e^{-ny}\right), \frac{\nabla y}{y}\right\rangle,
	\]
	which by the ODE comparison principle again, implies 
	\[
		\max_{\Gamma_{t}}\frac{\k}{y}e^{-ny} \leq (1 + Ce^{2rt})\max_{\Gamma_{\alpha_{t}}}\frac{\k}{y}e^{-ny}.
	\]
	Since on the initial time-slice $\Gamma_{a_{t}} = A_{t}^{\lambda, \xi}$, 
	\[
		\frac{\k}{y}e^{-ny} = \frac{\lambda\tan(\lambda y)}{y}\cos\theta e^{-ny},
	\]
	the claim follows by letting $\lambda \to \lambda_{0}$.
\end{proof}

We are now able to prove the major result of this section.

\begin{prop}\label{importantprop}
	If we parameterise $\Gamma_{t}$ as a graph over the $x$-axis, then the limit 
	\[
		A(x) := \lim_{t \to -\infty}e^{-\lambda_{0}^{2}t}y(x,t)
	\]
	exists in $(0,\infty)$ for all $x \in (-1,1)$ on the constructed ancient solution.
\end{prop}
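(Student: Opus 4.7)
Proof plan: Set $u(x,t) := e^{-\lambda_0^2 t}y(x,t)$; the goal is to show $\log u(x,\cdot)$ is Cauchy as $t \to -\infty$ for each $x \in (-1,1)$. By Lemma~\ref{gradientbound}, $|\sin\theta| = O(e^{rt})$, so for $t$ sufficiently negative (depending on $x$), $\Gamma_t$ is a nearly-horizontal graph over a neighbourhood of $x$. The graph form of curve shortening flow is $y_t = \k\sqrt{1+y_x^2}$, whence
\[
(\log u)_t = \frac{y_t}{y} - \lambda_0^2 = \frac{\k}{y}\sqrt{1+y_x^2} - \lambda_0^2.
\]

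The main technical step is to prove $|(\log u)_t| \leq Ce^{rt}$. Lemma~\ref{supportfunctionbound} combined with Lemma~\ref{curvatureestimatesfortheheight} gives $y \leq C_2\k \leq C_2C_1 e^{rt}$, so $e^{\pm ny} = 1 + O(e^{rt})$. Feeding this into Lemmas~\ref{height1} and~\ref{height2} collapses both one-sided bounds on $\k/y$ into the two-sided estimate $|\k/y - \lambda_0^2| = O(e^{rt})$. The gradient bound also yields $|y_x| = |\tan\theta| = O(e^{rt})$ and therefore $\sqrt{1+y_x^2} - 1 = O(e^{2rt})$. Multiplying these two estimates produces the claim.

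Finally, integrating $|(\log u)_t| \leq Ce^{rt}$ from $t_1$ to $t_2$ (both $\leq T$ for some sufficiently negative $T$) gives
\[
|\log u(x,t_2) - \log u(x,t_1)| \leq \tfrac{C}{r}e^{rt_2},
\]
which is Cauchy as $t_1, t_2 \to -\infty$, so $\log u(x,t) \to L(x) \in \R$. Keeping $t_2 = T$ fixed in the same inequality traps $\log u(x,t_1)$ in a bounded interval around the finite value $\log u(x,T)$ (finite since $y(x,T) > 0$ for $x \in (-1,1)$), so the limit $A(x) = e^{L(x)}$ lies in $(0,\infty)$. I expect the main difficulty to be the bookkeeping in the middle paragraph, where the exponentials $e^{\pm ny}$ appearing in Lemmas~\ref{height1}--\ref{height2} must be linearised against the smallness of $y$ itself in order to pin $\k/y$ between bounds that differ only by $O(e^{rt})$; everything else reduces to a one-dimensional ODE-comparison argument pointwise in $x$.
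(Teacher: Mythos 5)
Your proposal is correct and takes essentially the same route as the paper: both arguments hinge on Lemmas \ref{height1} and \ref{height2}, together with the gradient and curvature estimates, to show that $\bigl(\log y - \lambda_{0}^{2}t\bigr)_{t} = O(e^{rt})$ and then integrate in time. The only differences are organizational: you linearise $e^{\pm ny}$ and $\sec\theta$ directly from $y \leq C_{2}\k = O(e^{rt})$ and $|\tan\theta| = O(e^{rt})$, where the paper instead runs ODE-comparison arguments on the auxiliary functions $f$ and $g$, and your single Cauchy estimate combines what the paper does in three steps (upper bound, lower bound, monotone convergence).
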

\begin{proof}
	First we show that 
	\begin{equation}\label{aissmall}
		e^{-\lambda_{0}^{2}t}y(x,t) < C
	\end{equation}
	for some constant $C$ and for all $t$ small enough. By Lemma~\ref{height1}, for sufficiently negative time,
	\begin{align*}
		(\log y(t) - \lambda_{0}^{2}t)_{t} &= \frac{\k}{y\cos\theta} - \lambda_{0}^{2} \\ 
		&\geq (e^{-ny} - 1)\lambda_{0}^{2} - ne^{rt}.
	\end{align*}
	We claim that there exists a positive constant $a$ such that 
	\[
		f(y,t) := 1 - ae^{rt} - e^{-ny} \leq 0
	\] 
	for all $t$ sufficiently negative. Indeed $\lim_{t \to -\infty}f = 0$, and for sufficiently negative time, 
	\begin{align*}
		f_{t} &= -ar e^{rt} + \frac{n\k}{\cos\theta}e^{-ny} \\
		&\leq -ar e^{rt} + 2nCe^{rt},
	\end{align*}
	where we have used Lemma \ref{curvatureestimatesfortheheight} and have estimated $\sec\theta < 2$. Thus, we can pick $a$ so that there exists a $T < 0$ so that for any $t < T$, we have $f_{t} \leq 0$. This proves the claim, and so
	\begin{align*}
		(\log y(t) - \lambda_{0}^{2}t)_{t} &\geq -(a\lambda_{0}^{2}+n)e^{rt}.
	\end{align*}
	Integrating from $t$ to $T$ proves that $\log y(t) - \lambda_{0}^{2}t$ is uniformly bounded from above for all $t < T$, hence (\ref{aissmall}) is true for all $t$ sufficiently small. 
	Now we will prove 
	\begin{equation}\label{aisbig}
		e^{-\lambda_{0}^{2}t}y(x,t) > \tilde{C}
	\end{equation}
	for some constant $\tilde{C} > 0$ and for all $t$ small enough.
	By Lemma~\ref{height2},  for sufficiently negative time, 
	\begin{align*}
		(\log y(t) - \lambda_{0}^{2}t)_{t} &= \frac{\k}{y\cos\theta} - \lambda_{0}^{2} \\ 
		&\leq (\frac{e^{ny}}{\cos\theta} - 1)\lambda_{0}^{2} + 4me^{2rt}.
	\end{align*}
	Similar to before, we claim that there is a positive constant $b$ such that 
	\[
		g := 1 + be^{2rt} - \frac{e^{ny}}{\cos\theta} \geq 0
	\]
	for all $t$ sufficiently negative.
	Indeed, $\lim_{t \to -\infty}g = 0$ and, for sufficiently negative time, 
	\begin{align*}
		g_{t} &= - \frac{e^{ny}n\k}{\cos^{2}\theta} - \frac{e^{ny}\theta_{t}\sin\theta}{\cos^{2}\theta} + 2br e^{2rt} \\ 
		&\geq -8nC_{1}e^{rt} - 16Ce^{rt} + 2br e^{2rt},
	\end{align*} 
	where we have used the fact that $\theta_{t} = \k_{s}$, Lemma~\ref{gradientbound}, Lemma~\ref{curvatureestimatesfortheheight} and have used the estimates $-\frac{1}{\cos\theta} > -2$, $-e^{ny} > -2$ and $-\theta_{t} \geq -2$. Thus, we can pick $b > 0$ so that there exists $T < 0$ so that for any $t < T$, $g_{t} \geq 0$. This proves the claim, and so 
	\[
		(\log y(t) - \lambda_{0}^{2}t)_{t} \leq (b\lambda_{0}^{2}+4m)e^{rt}.
	\]
	Integrating from $t$ to $T$ proves that $\log y(t) - \lambda_{0}^{2}t$ is bounded from below for all $t < T$, hence (\ref{aisbig}) is true for all $t$ sufficiently small.
	Now, we show that the limit exists, from which, our uniform bounds above imply the result. By a direct calculation
	\begin{align*}
		\frac{d}{dt}\left(e^{-\lambda_{0}^{2}t}y(x,t)\right) &= \left(\frac{\k}{y\cos\theta} - \lambda_{0}^{2}\right)ye^{-\lambda_{0}^{2}t} \\
		&\geq \tilde{C}\left((e^{-ny} - 1)\lambda_{0}^{2} - ne^{rt} \right)\\
		&\geq \tilde{C}\left(-a\lambda_{0}^{2} - n \right)e^{rt},
	\end{align*}
	where we have used \eqref{aisbig} and Lemma \ref{height1}. Therefore, we conclude that
	\[
		\lim_{t \to -\infty} e^{-\lambda_{0}^{2}t}y(x,t)
	\]
	exists in $(0,\infty)$. 
\end{proof}

\section{Uniqueness}

Let $\{\Gamma_{t}\}_{t \in (-\infty,0)}$ by \emph{any} convex, locally uniformly convex ancient solution to the free boundary curve shortening flow in $\Omega$. We may assume by Stahl's theorem \cite{regularity} that $\Gamma_{t}$ contracts to a point on the boundary $\partial\Omega$ as $t \to 0$.

\begin{lem}
	$\Gamma_{t}$ converges in $C^{\infty}$ to a diameter as $t \to -\infty$.
\end{lem}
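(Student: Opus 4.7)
The plan is to first establish Hausdorff convergence of $\Gamma_t$ to a nondegenerate chord as $t\to-\infty$ via area and length monotonicity, then identify this chord as a diameter using the free boundary condition, and finally upgrade to $C^\infty$ convergence via the curvature estimates of Section 3.

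To start, let $A(t)$ denote the area of the convex region enclosed by $\Gamma_t$ and $\partial\Omega$. As noted in Remark \ref{boundsont0}, $-A'(t)=\theta_+(t)+\theta_-(t)$, where $\theta_\pm(t)$ are the angular extents of the tangent along $\Gamma_t$ as defined in Lemma \ref{gradientbound}. Since $A$ is bounded above by $|\Omega|$ and the right-hand side is non-negative, the limit $A(-\infty):=\lim_{t\to-\infty} A(t)$ exists and is finite, giving $\int_{-\infty}^{0}(\theta_+(t)+\theta_-(t))\,dt<\infty$. The boundary evolution $\frac{d(\theta_++\theta_-)}{dt}=(\overline\kappa_++\overline\kappa_-)\kappa^\Omega\ge 0$ shows that $\theta_++\theta_-$ is monotone in $t$, so the integrability forces $\theta_+(t)+\theta_-(t)\to 0$ and hence each of $\theta_\pm(t)\to 0$ as $t\to-\infty$.

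Next, the boundary points $q^\pm(t)$ move monotonically along $\partial\Omega$ (their angle parameters are rigidly related to $\theta_\pm$), so $q^\pm(t)$ converge to some $q^\pm_\infty\in\partial\Omega$. Because the total turning of $\Gamma_t$ between its endpoints equals $\theta_++\theta_-\to 0$, all tangent vectors along $\Gamma_t$ converge to a single direction, and $\Gamma_t$ converges in Hausdorff distance to the segment joining $q^-_\infty$ and $q^+_\infty$. To rule out the degenerate case $q^+_\infty=q^-_\infty$, note that since $\Gamma_t$ meets $\partial\Omega$ orthogonally the boundary terms in the evolution of length vanish, giving $\frac{dL}{dt}=-\int_{\Gamma_t}\kappa^2\,ds\le 0$, with strict inequality for each $t<0$ by local uniform convexity. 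Thus $L(t)$ is strictly decreasing and $L(-\infty)>L(-1)>0$; combined with the convexity estimate $L(t)\le |q^+(t)-q^-(t)|/\cos(\theta_++\theta_-)$, passing to the limit yields $|q^+_\infty-q^-_\infty|\ge L(-\infty)>0$, so the limit chord is nondegenerate. Orthogonality with $\partial\Omega$ then follows from the free boundary condition: since $\theta_\pm\to 0$, the tangent to $\Gamma_t$ at $q^\pm(t)$ converges to the chord's direction, forcing the outward normal of $\partial\Omega$ at $q^\pm_\infty$ to be parallel to that direction; hence the chord meets $\partial\Omega$ orthogonally at both endpoints and is a diameter $D$ of $\Omega$.

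For $C^\infty$ convergence, after rotating and translating so that $D$ is the segment from $-e_1$ to $e_1$ and $\Gamma_t\subset\{y>0\}$ for $t$ sufficiently negative, Remark \ref{notdependentonconstructedsolution} allows us to invoke Lemmas \ref{supportfunctionbound}--\ref{curvatureestimatesfortheheight} for this ancient solution, yielding $\overline\kappa(t)\le Ce^{rt}$. Combined with the gradient bound $|y_x|=|\tan\theta|\le Ce^{rt}$ from Lemma \ref{gradientbound} and Stahl's Ecker-Huisken type estimates, we obtain uniform exponential decay of curvature and all its spatial derivatives, yielding $\Gamma_t\to D$ in $C^\infty$. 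I expect the main obstacle to be simultaneously establishing nondegeneracy of the limiting chord and the constancy of the limiting tangent direction; both rely on the careful interplay of the two monotone quantities (area and length), after which orthogonality is immediate and smooth convergence follows routinely from the already-established Section 3 estimates.
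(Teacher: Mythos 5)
Your overall strategy is sound and is essentially the intended one (the paper itself only defers to the disc case of Bourni--Langford): $-A'(t)=\theta_++\theta_-$ together with $\frac{d}{dt}(\theta_++\theta_-)\ge 0$ forces the total turning to zero backwards in time; orthogonality ties the endpoint positions rigidly to $\theta_\pm$, so $q^\pm(t)$ converge; the length monotonicity $\frac{dL}{dt}=-\int\kappa^2\,ds$ (boundary terms vanish by the Neumann condition) combined with $L\le |q^+-q^-|/\cos(\theta_++\theta_-)$ rules out a degenerate chord; and passing the free boundary condition to the limit identifies the chord as a diameter. All of this is correct.

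The gap is in your final, $C^\infty$ step, which as written is circular. Remark~\ref{notdependentonconstructedsolution} cannot be invoked inside this proof: the proofs of Lemma~\ref{supportfunctionbound} and Lemma~\ref{curvatureestimatesfortheheight} presuppose exactly the backward convergence to $D$ that this lemma is meant to establish --- for instance the interior estimate $\kappa<1$ for $t\ll 0$, the facts $\lim_{t\to-\infty}(x\sin\theta-C_1y)=0$ and $\lim_{t\to-\infty}(y-C_2\kappa)=0$, and the boundary asymptotics as $q^\pm(t)\to\pm e_1$; none of these follow from the Hausdorff convergence you have at that stage (Hausdorff convergence of convex curves to a segment gives no pointwise control on $\kappa$). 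Likewise Lemma~\ref{gradientbound} is proved only for the old-but-not-ancient flows with the Angenent-oval initial data, so citing it verbatim for a general ancient solution is not legitimate (though its ODE argument can be rerun once $\theta_++\theta_-\to 0$ is known). The repair is contained in the other half of your own sentence and needs no exponential rates: $|y_x|=|\tan\theta|\le\tan(\theta_++\theta_-)\to 0$ gives a uniform small gradient bound for the graph representation over $D$; Stahl's global-in-space, interior-in-time estimates applied on $[t-1,t]$ for each sufficiently negative $t$ (using ancientness) give uniform bounds on $\kappa$ and all its derivatives; and interpolating these bounds against the uniform $C^0$ and $C^1$ convergence $y\to 0$, $y_x\to 0$ yields convergence in $C^k$ for every $k$. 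The exponential decay estimates of Section~3 then become available for the general solution only \emph{after} this lemma and the normalization of the limit diameter, which is the order the paper follows.
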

\begin{proof}
	The proof of this is the same as that shown in \cite[Lemma 3.1]{classification},  \emph{mutatis mutandis}.
\end{proof}

Therefore, by scaling and translating, we may assume without loss of generality that the backwards limit is $D=[-1,1]$ as assumed thus far. 

\begin{lem}\label{samelemmaagain}
	There exists a constant $C$ such that for sufficiently negative time
	\[
		\overline{\k} \leq C \underline{\k}
	\]
	on the ancient solution $\Gamma_{t}$.
\end{lem}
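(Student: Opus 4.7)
The plan is short because almost all of the work has already been done. By Remark~\ref{notdependentonconstructedsolution}, Lemmas~\ref{supportfunctionbound}, \ref{curvatureboundlemma}, and \ref{curvatureestimatesfortheheight} apply verbatim to any convex, locally uniformly convex ancient solution in $\Omega$, not just the one constructed in Proposition~\ref{constructedancientsolution}. In particular, applying the second estimate of Lemma~\ref{curvatureestimatesfortheheight} to the present $\{\Gamma_{t}\}_{t \in (-\infty,0)}$ gives $|\k_{s}| \leq C_{2}\k$ for all sufficiently negative $t$, which is equivalent to the bound $|(\log \k)_{s}| \leq C_{2}$ on such a time-slice.

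Next I would integrate $(\log \k)_{s}$ along $\Gamma_{t}$ from a point where $\k$ attains its minimum $\underline{\k}$ to a point where it attains its maximum $\overline{\k}$. Because $\Gamma_{t}\subset\overline{\W}$ is convex with endpoints on $\partial\W$, its length is bounded above by a constant $L$ depending only on $\W$ (e.g.\ by the perimeter of $\partial\W$, or, for $t$ very negative, by any fixed constant slightly larger than $|D| = 2$, since $\Gamma_{t}\to D$ in $C^{\infty}$ by the previous lemma). Consequently
\[
\log\frac{\overline{\k}}{\underline{\k}} \;=\; \int_{p}^{\overline p}(\log\k)_{s}\,ds \;\leq\; C_{2}L,
\]
so one may take $C := e^{C_{2}L}$ and obtain $\overline{\k}\leq C\underline{\k}$.

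There is no genuine obstacle: the argument is simply the repackaging, as a standalone statement, of the internal computation ``$\overline{\k}\leq e^{2C_{2}}\underline{\k}$'' that already appears inside the proof of Lemma~\ref{curvatureestimatesfortheheight}, now read through Remark~\ref{notdependentonconstructedsolution} so that it applies to an \emph{a priori} arbitrary ancient solution. The only sanity check worth making is that the length bound $L$ on $\Gamma_t$ holds in our generality, which is immediate from convexity together with containment in the bounded domain $\W$ (and, if one prefers a sharper constant, from the $C^\infty$ convergence $\Gamma_t \to D$ as $t \to -\infty$).
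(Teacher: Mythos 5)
Your proposal is correct and is essentially the paper's own argument: the paper's proof simply cites Remark~\ref{notdependentonconstructedsolution} together with the proof of Lemma~\ref{curvatureestimatesfortheheight}, where the bound $\overline{\k}\leq e^{2C_{2}}\underline{\k}$ is obtained by exactly your integration of $(\log\k)_{s}$ between the extrema of $\k$ using a length bound on $\Gamma_{t}$. Your only variation is using a generic length bound $L$ (from convexity and containment in $\Omega$) instead of the paper's bound $\mathrm{Length}(\Gamma_{t})\leq 2$ for $t$ sufficiently negative, which is immaterial.
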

\begin{proof}
	The proof of this is identical to that in Lemma~\ref{curvatureestimatesfortheheight} as per Remark~\ref{notdependentonconstructedsolution}.
\end{proof}

\begin{lem}\label{importantpronongeneralsolution}%Should be reworded to give a uniform bound instead, this will be clearer.
	If we parameterise $\Gamma_{t}$ as a graph over the $x$-axis, then there is a constant $C$ such that
	\[
		\sup_{x \in (-1,1)}\left(\limsup_{t \to -\infty} e^{-\lambda_{0}^{2}t}y(x,t)\right) < C,
	\]
\end{lem}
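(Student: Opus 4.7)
The plan is to replicate the strategy of Proposition~\ref{importantprop} on the general ancient solution $\Gamma_t$, using only estimates that are valid without the old-but-not-ancient approximation. By Remark~\ref{notdependentonconstructedsolution} and Lemma~\ref{samelemmaagain}, the universal curvature bounds $\overline{\k} \leq C_1 e^{rt}$, $|\k_s| \leq C_2 \k$ and $\overline{\k} \leq C\underline{\k}$ are already available on $\Gamma_t$, so what remains is to produce the sharp height estimate.

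The crux is to establish an analog of Lemma~\ref{height2}, namely $\frac{\k}{y}e^{-ny} \leq C$ on $\Gamma_t$ for all sufficiently negative $t$. The PDE computation and Hopf boundary argument in Lemma~\ref{height2} are local and use only the universal curvature bounds above, so they yield, verbatim, the evolution inequality
\[
\max_{\Gamma_{t_2}}\frac{\k}{y}e^{-ny} \leq \bigl(1+C e^{2rt_2}\bigr)\max_{\Gamma_{t_1}}\frac{\k}{y}e^{-ny}, \qquad t_1<t_2<T,
\]
for any ancient solution. What is missing, compared to Lemma~\ref{height2}, is the evaluation of the right-hand side in the limit $t_1 \to -\infty$; previously this was supplied by the explicit Angenent-oval initial datum.

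I would supply this limiting bound by a barrier comparison with the constructed ancient solution $\Gamma_t^c$ from Proposition~\ref{constructedancientsolution}, which already satisfies the sharp height asymptotic $y^c(x,t)\sim A(x)e^{\lambda_0^2 t}$ in Proposition~\ref{importantprop}. Concretely, the aim is to show that there exists a time-shift $\tau$ such that $\Gamma_{t-\tau}^{c}$ lies above $\Gamma_t$ for all sufficiently negative $t$; then the free-boundary avoidance principle of Stahl gives $y(x,t) \leq y^c(x,t-\tau)$, and the bound of Proposition~\ref{importantprop} for the constructed solution transfers to give $y(x,t) \leq C e^{\lambda_0^2 t}$ uniformly in $x$. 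The existence of such a shift is extracted from the smooth convergence $\Gamma_t\to D$ of the first lemma of this section, combined with the lower bound on $y^c$ already available from the Lemma~\ref{height1}-analog for the constructed solution.

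The main obstacle is precisely this barrier step: since both $\Gamma_t$ and $\Gamma_t^c$ collapse onto $D$ as $t\to -\infty$, placing a time-shift of one strictly above the other at very negative times is delicate and cannot be read off from mere $C^\infty$-convergence; it requires combining the sharp asymptotic $y^c\sim A(x)e^{\lambda_0^2 t}$ with the universal bounds $\overline{\k}\leq C_1 e^{rt}$ and $\overline{\k}\leq C\underline{\k}$ on $\Gamma_t$ to control the ratio $y/y^c$ from above as $t\to -\infty$. Once this is achieved, integrating $(\log y)_t = \k/(y\cos\theta)$ using the established analog of Lemma~\ref{height2} follows the first half of the proof of Proposition~\ref{importantprop} line by line, and delivers the uniform bound claimed.
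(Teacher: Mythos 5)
There is a genuine gap, and you have in fact located it yourself: the entire argument hinges on the ``barrier step'' of placing a fixed time-translate $\Gamma^{c}_{t-\tau}$ above $\Gamma_{t}$ for all sufficiently negative $t$, and this step is never carried out. Worse, it is essentially circular: since $y^{c}(x,t-\tau)\sim A(x)e^{\lambda_{0}^{2}(t-\tau)}$, the ordering $y(x,t)\leq y^{c}(x,t-\tau)$ for all very negative $t$ is equivalent to the bound $y(x,t)\leq Ce^{\lambda_{0}^{2}t}$, which is precisely the statement of the lemma (and once you had such an ordering, the detour through the Lemma~\ref{height2} analogue would be superfluous anyway). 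The universal estimates you invoke cannot supply it: $\overline{\k}\leq C_{1}e^{rt}$ only yields $y\lesssim e^{rt}$, and there is no reason for $e^{rt}$ to beat $e^{\lambda_{0}^{2}t}$, since $r$ is a small constant fixed by \eqref{eq:assumptions on r} with no useful relation to $\lambda_{0}^{2}$. For the same reason, your first step is also incomplete as stated: on a general ancient solution the ODE-comparison inequality from Lemma~\ref{height2} is useless without control of $\max_{\Gamma_{t_{1}}}\frac{\k}{y}e^{-ny}$ as $t_{1}\to-\infty$, which on the old-but-not-ancient flows was read off from the explicit Angenent-oval initial slice and is exactly what is missing here.

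The paper avoids all of this by demanding far less than ordering: it only needs that $\Gamma_{t}$ and the constructed solution $\hat{\Gamma}_{t}$ \emph{intersect} at each sufficiently negative time, which follows from a soft avoidance-principle argument (if they were ever disjoint they would stay disjoint, contradicting that both contract to a point of $\partial\Omega$ at $t=0$). At an intersection point $x_{0}=x_{0}(t)$ one has $y(x_{0},t)\leq\hat{y}(x_{0},t)\leq Ce^{\lambda_{0}^{2}t}$ by Proposition~\ref{importantprop}, and the bound is then propagated to every $x\in(-1,1)$ by writing $y(x,t)=\int_{-\infty}^{t}\frac{\k(x,s)}{\cos\theta(x,s)}\,ds$ and using Lemma~\ref{samelemmaagain} ($\overline{\k}\leq C\underline{\k}$, valid on any ancient solution) to compare the integrands at $x$ and $x_{0}$. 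You already have all the ingredients for this argument in hand; replacing the unproven ordering step by this intersection-plus-curvature-ratio argument closes the gap.
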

\begin{proof}
	Denote by $\{\hat{\Gamma}_{t}\}_{t \in (-\infty,0)}$ the solution constructed in Lemma~\ref{constructedancientsolution}. Then for all sufficiently negative time, $\Gamma_{t} \cap \hat{\Gamma}_{t} \neq \emptyset$. Indeed, if this were not true, then they would be disjoint for all times and therefore have to contract to the same point on the boundary at $t = 0$, which contradicts the avoidance principle. Therefore, for any $t$ sufficiently negative, there exists an $x_{0} \in (-1,1)$ such that $y(x_{0},t) < \hat{y}(x_{0},t)$, and therefore, by Proposition~\ref{importantprop}
	\[
		e^{-\lambda_{0}^{2}t}y(x_{0},t) \leq e^{-\lambda_{0}^{2}t}\hat{y}(x_{0},t) < C,
	\]
	Now let $x$ be some other point in $(-1,1)$. Then, by Lemma~\ref{samelemmaagain}, there exists a constant $\tilde{C}$ such that for sufficiently negative time
	\[
		y(x,t) = \int_{-\infty}^{t}y_{t}(x,t)dt = \int_{-\infty}^{t}\frac{\k(x,t)}{\cos\theta}dt \leq \tilde{C}\int_{-\infty}^{t}\frac{\k(x_{0},t)}{\cos\theta_{0}}dt = \tilde{C}y(x_{0},t).
	\]
	Hence, by once again using Proposition~\ref{importantprop}
	\[
		e^{-\lambda_{0}^{2}t}y(x,t) \leq \tilde{C}e^{-\lambda_{0}^{2}t}y(x_{0},t) < C.
	\]
\end{proof}

We will now examine the limiting behaviour of the height function on the general ancient solution.

\begin{prop}%A needs to be defined before as A(x_{0}) where x_{0} = limiting shift of angenant oval. Also a few things need to change, this is riddled with minor typos. Also the conclusion is incomplete, refer to the argument in the disc paper towards the end.
	For some constant $A$, we have
	\[
		e^{\lambda_{0}^{2}t}y(x,t) \to A \left(\cosh(\lambda_{0}x) + \frac{\k_{1} - \k_{2}}{2\lambda_{0} - (\k_{1} + \k_{2})\tanh{\lambda_{0}}}\sinh(\lambda_{0}x) \right)
	\]
	uniformly as $t \to -\infty$, where $\k_{1} := \k^{\Omega}(1,0)$, $\k_{2} := \k^{\Omega}(-1,0)$.
\end{prop}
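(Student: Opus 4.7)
The plan is to rescale the height as in Proposition~\ref{importantprop}, identify the linear boundary-value problem satisfied by the limit, and solve it. Set $u(x,t):=e^{-\l_{0}^{2}t}y(x,t)$. A direct computation from the graphical curve shortening equation $y_t=y_{xx}/(1+y_x^2)$ gives
\[
u_t=\frac{u_{xx}}{1+e^{2\l_{0}^{2}t}u_x^2}-\l_{0}^{2}u,
\]
whose formal $t\to-\infty$ limit is the linear ODE $A''=\l_{0}^{2}A$ on $(-1,1)$. For the boundary conditions, I would Taylor-expand $\Phi$ near $\pm e_1$ to write $\partial\Omega$ locally as $x=\pm1\mp\tfrac{\k^{\Omega}(\pm e_1)}{2}y^{2}+O(y^{3})$; orthogonal contact of $\Gamma_t$ with $\partial\Omega$ then forces $y_x(x_\pm(t),t)=\pm\k^{\Omega}(\pm e_1)y(x_\pm(t),t)+O(y^{2})$ at the moving endpoints $x_\pm(t)\to\pm1$, and dividing by $e^{\l_{0}^{2}t}$ and sending $t\to-\infty$ yields the Robin conditions $A'(1)=\k_{1}A(1)$ and $A'(-1)=-\k_{2}A(-1)$. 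Writing $A(x)=\a\cosh(\l_{0}x)+\beta\sinh(\l_{0}x)$, a short computation shows these two conditions are compatible exactly when $\l_{0}$ satisfies the characteristic equation defining it in Lemma~\ref{lem:angenant ovals existence}, and they then force the ratio $\beta/\a=(\k_{1}-\k_{2})/(2\l_{0}-(\k_{1}+\k_{2})\tanh\l_{0})$, giving the claimed shape up to a positive scalar $A$.

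To extract convergence, I need uniform bounds on $u$ and its derivatives. Lemma~\ref{importantpronongeneralsolution} supplies the upper bound. For a positive lower bound at any fixed $x\in(-1,1)$, the argument of Lemma~\ref{importantpronongeneralsolution} shows $\Gamma_t\cap\hat\Gamma_t\neq\emptyset$ for all sufficiently negative $t$, so at an intersection $x_\ast(t)\in(-1,1)$ one has $u(x_\ast(t),t)\ge c>0$ by Proposition~\ref{importantprop}; Lemma~\ref{samelemmaagain} combined with the time-integration argument in Lemma~\ref{importantpronongeneralsolution} then transfers this bound to any fixed $x$. Since $\th_{\pm}\to 0$ at rate $O(e^{rt})$ (the proof of Lemma~\ref{gradientbound} extends to any ancient solution because $\th_++\th_-<\pi/2$ eventually), convexity gives $|y_x|=|\tan\th|=O(e^{rt})$, so the equation for $u$ is uniformly parabolic with bounded coefficients on compacts; interior parabolic regularity plus Stahl's estimates then furnish uniform $C^{k}$ bounds on $u$ away from $\pm 1$. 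By Arzel\`a--Ascoli, along every $t_n\to-\infty$ some subsequence of $u(\cdot,t_n)$ converges in $C^\infty_{\mathrm{loc}}((-1,1))$ to a positive $A\in C^\infty((-1,1))$ solving the BVP above, hence $A=c\,\phi$ where $\phi(x):=\cosh(\l_{0}x)+\frac{\k_{1}-\k_{2}}{2\l_{0}-(\k_{1}+\k_{2})\tanh\l_{0}}\sinh(\l_{0}x)$.

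The main obstacle is ruling out dependence of the constant $c$ on the chosen subsequence. The strategy is to adapt Proposition~\ref{importantprop} to the general ancient solution. Its key ingredients Lemmas~\ref{height1} and~\ref{height2} use only (i) the estimates of Lemmas~\ref{supportfunctionbound}--\ref{curvatureestimatesfortheheight}, which hold for any ancient solution by Remark~\ref{notdependentonconstructedsolution}, and (ii) an upper bound of the form $e^{-\l_0^2 t}y\le C$, now supplied by Lemma~\ref{importantpronongeneralsolution}. Re-running the maximum-principle and Hopf boundary-point arguments---the boundary term produced by non-constant curvature of $\partial\Omega$ is handled exactly as in \eqref{eq:tayloresponsionterm} via the expansion $y=\th/\k^{\Omega}_{\pm}+O(\th^2)$---reproduces Lemmas~\ref{height1} and~\ref{height2} in the general case. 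Plugging these two-sided bounds into the computation at the end of Proposition~\ref{importantprop} gives $\bigl|\tfrac{d}{dt}(e^{-\l_0^2 t}y(x,t))\bigr|\le Ce^{rt}$ at every fixed $x\in(-1,1)$, so $e^{-\l_0^2 t}y(x,t)$ is Cauchy in $t$ as $t\to-\infty$; combined with the subsequential shape identification, this forces uniform convergence to $A\phi$ for a single positive constant $A$.
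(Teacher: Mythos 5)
Your identification of the limiting Robin problem and its convex negative eigenfunction follows the same route as the paper (rescale the height, pass to a linear heat/ODE problem on $[-1,1]$ with boundary conditions $A'(\pm1)=\pm\k^{\Omega}(\pm e_{1})A(\pm1)$, conclude the shape is $\varphi_{\lambda_0}$ up to a scalar), and that part is fine modulo the technical point that interior estimates alone do not carry the Robin condition to the limit -- the paper handles this with a weak formulation and test functions $\eta^{j}=\phi^{j}\eta$ corrected to satisfy the moving boundary condition. The genuine gap is in your final step, where the constant is shown to be independent of the subsequence. You claim that Lemmas~\ref{height1} and~\ref{height2} can be ``re-run'' for an arbitrary ancient solution because they only use the curvature estimates of Remark~\ref{notdependentonconstructedsolution} plus the upper bound $e^{-\lambda_0^2t}y\le C$. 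That is not what their proofs use: both lemmas are proved on the old-but-not-ancient approximating flows $\{\Gamma^{\rho}_t\}$, and the maximum-principle argument is initialized at the initial time-slice, where the inequalities hold because the initial curve is an explicit Angenent oval ($\tfrac{\k}{y}e^{ny}\ge\lambda^{2}\cos\theta$, resp.\ $\tfrac{\k}{y}e^{-ny}=\tfrac{\lambda\tan(\lambda y)}{y}\cos\theta\,e^{-ny}$), and then passed to the limit $\rho\to0$. The paper states explicitly that these height estimates ``do depend on the particular ancient solution constructed.'' For a general ancient solution there is no approximating family with controlled initial data, and the natural replacement -- knowing that $\k/y\to\lambda_0^{2}$ as $t\to-\infty$ -- is precisely the sharp asymptotic rate you are trying to establish, so the argument as written is circular. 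Consequently your Cauchy-in-$t$ estimate $\bigl|\tfrac{d}{dt}(e^{-\lambda_0^2t}y)\bigr|\le Ce^{rt}$ for the general solution is unsupported.

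The paper closes this step differently: after identifying every weak$^{*}$ subsequential limit of the time-translated, rescaled heights as $A\,e^{\lambda_0^{2}t}\varphi_{\lambda_0}$ with $A\ge0$, it invokes the avoidance principle (comparison of the solution with translates of the constructed solution, as in Lemma~\ref{importantpronongeneralsolution} and the final uniqueness theorem) to conclude that the constant $A$ is the same for all subsequences. If you want to keep your structure, you need either an argument of that comparison type to pin down $A$, or an independent proof of two-sided bounds of the form of Lemmas~\ref{height1}--\ref{height2} valid for arbitrary ancient solutions; the latter is not available by the reasoning you gave.
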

\begin{proof}
	For each $\tau < 0$, let $y^{\tau}(x,t) := e^{-\lambda_{0}^{2}\t}y(x,t+\tau)$ defined on the time-translated flow $\{\Gamma_{t}^{\tau}\}_{t \in (-\infty,-\tau)}$ where $\Gamma_{t}^{\tau} := \Gamma_{t+\tau}$. Lemma~\ref{importantpronongeneralsolution}, implies a uniform bound for $y^{\tau}$ on $\{\Gamma_{t}^{\tau}\}_{t \in (-\infty,T]}$ for any $T \in \R$. Alaoglu's theorem therefore yields a sequence of times $\tau_{j} \to -\infty$ such that $y^{\tau_{j}}$ converges in the weak$^*$ topology as $j \to \infty$ to some $y^{\infty} \in L^{2}_{\text{loc}}([-1,1] \times (-\infty,\infty))$. Since convexity and the boundary condition imply a uniform bound for $\nabla^{\tau}y^{\tau}$ on any time interval of the form $(-\infty,T]$, where $\nabla^{\tau}$ is the gradient on $\Gamma_{t}^{\tau}$, we may also arrange that the convergence in uniform in space at time zero, say. 
    For any $j$, note that $y^{\tau_{j}}$ satisfies the following boundary value problem;
    \[
    \begin{cases}\tag{14}\label{bvp}
        (\partial_{t} - \Delta^{\tau_{j}})y^{\tau_{j}} = 0 \:\:\: \text{in} \:\: \Gamma_{t}^{\tau}\\
        \langle \nabla^{\tau_{j}}y^{\tau_{j}},N \rangle = y \cdot f \:\:\: \text{on} \:\: \partial\Gamma_{t}^{\tau},
    \end{cases}
    \]
    where $f = \frac{\sin\theta}{y}$, $N$ is the outward unit normal to $\partial\Omega$ and $\Delta^{\tau}$ is the Laplacian on $\Gamma_{t}^{\tau}$. Since $y^{\tau_{j}}$ satisfies (\ref{bvp}) then necessarily
    \[
        \int_{-\infty}^{\tau_{j}}\int_{\Gamma_{t}^{\tau_{j}}}y^{\tau_{j}}(\partial_{t} - \Delta^{\tau_{j}})^{*}\eta = 0
    \]
    for all smooth $\eta$ which are compactly supported in time and satisfy 
    \[
        \nabla^{\tau}\eta \cdot N = \eta \cdot f \:\:\: \text{on} \:\: \partial\Gamma_{t}^{\tau_{j}},
    \]
   where $(\partial_{t} - \Delta^{\tau_{j}})^{*} = -(\partial_{t} + \Delta^{\tau_{j}})$ is the formal $L^{2}$-adjoint of the heat operator. Since $\{\Gamma_{t}^{\tau_{j}}\}_{t \in (-\infty,-\tau_{j})}$ converges uniformly in the smooth topology to the stationary interval $\{[-1,1] \times \{0\}\}_{t \in (-\infty,\infty)}$ as $j \to \infty$, we may parameterise each flow $\{\overline{\Gamma}_{t}^{\tau_{j}}\}_{t \in (-\infty,-\tau_{j})}$ over $I := [-1,1]$ by a family of embeddings $\gamma_{t}^{j}:I \times (-\infty,-\tau_{j}) \to \Omega$ which converge in $C^{\infty}_{\text{loc}}(I \times (-\infty,\infty))$ to the stationary embedding $\Gamma^{\infty}$, which is characterised by $(x,t) \mapsto xe_{1}$. Given $\eta \in C_{0}^{\infty}(I \times (-\infty, \infty))$ satisfying $\eta_{z}(1) = \eta\k_{1}$, and $\eta_{z}(-1) = -\eta\k_{2}$ (recall that $f(e_{1}) = \k_{1}$ and $f(-e_{1}) = -\k_{2}$). Set $\eta^{j} = \phi^{j}\eta$, where $\phi^{j}:[-1,1] \times (-\infty, \tau_{j}) \to \R$ is defined by
   \[
        \phi^{j}(z,t) = e^{s^{j}(z,t)},
   \]
   where $s^{j}_{z}(z,t) = (|\gamma^{j}_{z}(z,t)| - 1)f(\gamma^{j}(z))$. This ensures that $\nabla^{\tau_{j}}\eta^{j} \cdot N = \eta^{j} \cdot f$, and hence
    \[
        \int_{-\infty}^{\tau_{j}}\int_{\Gamma_{t}^{\tau_{j}}}y^{\tau_{j}}(\partial_{t} - \Delta^{\tau_{j}})^{*}\eta^{j} = 0.
    \]
    Since $\phi^{j} \to 1$ in $C^{\infty}_{\text{loc}}(I \times (-\infty,\infty))$, then weak$*$ convergence of $y^{\tau_{j}}$ to $y^{\infty}$ as $j \to \infty$ implies
    \[
        \int_{-\infty}^{\infty}\int_{\Gamma^{\infty}}y^{\infty}(\partial_{t} - \Delta^{\tau_{j}})^{*}\eta = 0.
    \]
    Thus by the $L^{2}$ theory for the heat equation, $y^{\infty}$ satisfies
    \[
    \begin{cases}
        y^{\infty}_{t} = y^{\infty}_{xx} \:\:\: \text{in} \:\: [-1,1]\\
        y^{\infty}_{x}(\pm 1) = \pm y \cdot f((\pm1,0)) \:\:\: \text{on} \:\: \partial\Gamma_{t}^{\tau}.
    \end{cases}
    \]
    Finally, we characterise the limit. Separation of variables leads us to consider the problem 
    \[
    \begin{cases}
        -\varphi_{xx} = \mu\varphi \:\:\: \text{in} \:\: [-1,1]\\
        \varphi_{x}(\pm 1) = \pm \varphi \cdot f((\pm1,0)) \:\:\: \text{on} \:\: \partial\Gamma_{t}^{\tau}.
    \end{cases}
    \]
    After a long calculation, one finds that the negative eigenspace restricted to convex functions is one dimensional. This eigenspace corresponds to the eigenvalue $\lambda_{0}$, as defined in Lemma \ref{lem:angenant ovals existence} and the corresponding eigenfunction is given by
    \[
        \varphi_{\lambda_{0}} := \cosh{\lambda_{0}x} + \frac{\k_{1} - \k_{2}}{2\lambda_{0} - (\k_{1} + \k_{2})\tanh\lambda_{0}}\sinh{\lambda_{0}x}.
    \]
    Note that in general, there might be a second negative eigenvalue, however, in that case the corresponding eigenfunction is not convex.
    Thus, 
    \[
        y^{\infty}(x,t) = Ae^{\lambda_{0}^{2}t}\left(\cosh{\lambda_{0}x} + \frac{\k_{1} - \k_{2}}{2\lambda_{0} - (\k_{1} + \k_{2})\tanh\lambda_{0}}\sinh{\lambda_{0}x}\right)
    \]
    for some $A \geq 0$, and by the avoidance principle, such an $A$ is unique.
\end{proof}

Uniqueness of the constructed ancient solution now follows directly from the avoidance principle.

\begin{thm}
	Modulo time translation, for each diameter of $\Omega$, there exists precisely two convex, locally uniformly convex, ancient solution to the free boundary curve shortening flow in $\Omega$, one lying on each side of the diameter.
\end{thm}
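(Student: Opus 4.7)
The plan is to combine the existence already furnished by Proposition~\ref{constructedancientsolution} with an avoidance-principle argument driven by the sharp asymptotics for the height established in the preceding proposition. Since Proposition~\ref{constructedancientsolution} already produces two solutions, one on each side of $D$, the task reduces to showing that on a fixed side of $D$ any such ancient solution agrees with the constructed one up to a time translation.

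First I would fix one side of $D$ (say $y>0$) and an arbitrary convex, locally uniformly convex ancient solution $\Gamma_t$ on that side. By the lemma stating that $\Gamma_t \to D$ in $C^{\infty}$ as $t\to -\infty$, we may normalise so that $D=[-1,1]$. The preceding proposition then provides a constant $A\geq 0$ such that
\[
    e^{-\lambda_0^2 t}y(x,t)\longrightarrow A\,\varphi_{\lambda_0}(x)\qquad\text{uniformly in }x\in(-1,1),
\]
where $\varphi_{\lambda_0}$ is the eigenfunction appearing there. Proposition~\ref{importantprop} guarantees that the constructed solution $\hat\Gamma_t$ has asymptotic constant $\hat A\in (0,\infty)$.

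The second step is to show $A>0$ whenever $\Gamma_t$ is non-trivial. Indeed, if $A=0$, then for every $\tau\in\R$ the time-translate $\hat\Gamma_{t+\tau}$ has asymptotic constant $\hat A e^{\lambda_0^2\tau}>0=A$, so $\hat\Gamma_{t+\tau}$ strictly exceeds $\Gamma_t$ at all sufficiently negative times, and the avoidance principle propagates this ordering. Sending $\tau\to-\infty$ collapses $\hat\Gamma_{t+\tau}$ to $D$, which would force $\Gamma_t\subseteq D$ and contradict non-triviality. With $A,\hat A>0$ in hand, I would choose $\tau_0\in\R$ with $\hat A e^{\lambda_0^2\tau_0}=A$, so that $\hat\Gamma_{t+\tau_0}$ and $\Gamma_t$ share the same asymptotic constant. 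For any $\tau>\tau_0$, the asymptotic constant of $\hat\Gamma_{t+\tau}$ strictly exceeds $A$, so at all sufficiently negative $t$ the curve $\hat\Gamma_{t+\tau}$ lies strictly above $\Gamma_t$, and the avoidance principle yields $\hat\Gamma_{t+\tau}\geq\Gamma_t$ on the common time domain. Letting $\tau\downarrow\tau_0$ produces $\hat\Gamma_{t+\tau_0}\geq\Gamma_t$; the symmetric argument with $\tau<\tau_0$ gives the reverse inequality, so $\Gamma_t=\hat\Gamma_{t+\tau_0}$. Combined with the two solutions of Proposition~\ref{constructedancientsolution}, this proves the count is exactly two.

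The hard part will be a careful deployment of the avoidance principle in this free-boundary setting, where the two compared solutions are not a priori assumed to share endpoints on $\partial\Omega$, and where the time domains differ after translation. The asymptotic comparison initiates a strict separation both in the interior and near the two free-boundary endpoints, which rules out interior and boundary tangencies via the usual strong and Hopf maximum-principle arguments; the propagation forward in time is then standard. All substantive work has already been done in the asymptotics section, so beyond this verification the uniqueness step is essentially a corollary.
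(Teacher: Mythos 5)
Correct, and essentially the paper's own argument: both proofs rest on the fact that time translation scales the asymptotic constant by $e^{\lambda_0^2\tau}$, so an inequality of constants forces an ordering of the curves at very negative times which the avoidance principle propagates forward. The only differences are in the closing step — you compare the arbitrary solution against the constructed one, rule out $A=0$ explicitly, and sandwich with translates $\tau\downarrow\tau_0$ and $\tau\uparrow\tau_0$ at the matched constant, whereas the paper translates one arbitrary solution by $\tau>0$, lets $\tau\to0$ to get a one-sided ordering, and finishes via the common extinction point and the strong maximum principle; your variant is, if anything, slightly more careful on the degenerate case.
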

\begin{proof}
	Consider two convex ancient solutions $\{\Gamma_{t}\}$ and $\{\Gamma_{t}'\}$ to \eqref{eq:fbcsf} lying on one side of $D$. Given $\t > 0$, consider the time-translated solution $\{\Gamma_{t}^{\t}\}$ defined by $\Gamma_{t}^{\t} = \Gamma_{t+\t}'$. By the previous proposition;
	\[
		e^{-\lambda_{0}^{2}t}y^{\t}(x,t) \to Ae^{\lambda_{0}^{2}\t}\left(\cosh(\lambda_{0}x) + \frac{\k_{1} - \k_{2}}{2\lambda_{0} - (\k_{1} + \k_{2})\tanh\lambda_{0}}\sinh(\lambda_{0}x) \right)
	\]
	as $t \to -\infty$. Thus, $\Gamma_{t}^{\t}$ lies above $\Gamma_{t}$ for $t$ sufficiently negative. The avoidance principle then ensures that $\Gamma_{t}^{\t}$ lies above $\Gamma_{t}$ for all $t \in (-\infty,-\t)$. Taking $\t \to 0$, we find that $\Gamma'_{t}$ lies above $\Gamma_{t}$ for all $t < 0$ by the avoidance principle. But both curves reach the same point at time zero, and so they must intersect for all $t < 0$. The strong maximum principle then implies the two solutions coincide for all $t$. 
\end{proof}

%What follows is a construction of the initial curves being Angenent ovals.

\newpage

\begin{appendix}
\addtocontents{toc}{\setcounter{tocdepth}{1}}

\section{Orthogonally Intersecting Angenent Ovals}

Let $\Omega \subset \R^{2}$ be a compact, strictly convex domain and let $D$ be any diameter of $\Omega$. By shifting, rotating and dilating, we may assume that $\overline{D} = [-1,1]$. We will show that for any $\r > 0$, there exists a time-slice of an $x$-shifted Angenant oval of the form 
\[
    A_{t}^{\lambda, \xi} := \{(x,y) \in \R \times (0,\tfrac{\pi}{2\lambda}) \mid \sin(\lambda y) = e^{\lambda^{2}t}\cosh(\lambda(x - \xi))\}
\]
such that $A_{t}^{\lambda, \xi}$ intersects $\partial\Omega$ orthogonally at two points that lie below the line $y = \r$. Moreover we will show that the scale $\lambda$ has a limit as $\rho\to 0$. We will do the construction in two parts, first by asserting that we have orthogonality at a single point on the boundary, and then demonstrating we have a large enough degree of freedom to achieve orthogonality at a second point on the boundary.

\subsection{Orthogonality at a Single Point}

We adopt a graph parametrisation for $\partial\Omega \cap \{(x,y) \in \R^{2} \mid y \geq 0\}$, $\phi(x)$ where $x \in [-1,1]$. Pick a point on $\partial\Omega \cap \{(x,y) \in \R^{2} \mid y \geq 0\}$, say $(x_{0},\phi(x_{0}))$, such that $x_{0} > 0$ and $\phi'(x_{0}) < 0$. Then it is easily verified that $A_{t}^{\lambda, \xi}$ passes through $(x_{0},\phi(x_{0}))$ for any \emph{`valid'} $\lambda$ (the precise meaning of this will be established in the next lemma) and for any $t < 0$ where $\xi$ is given by
\[
    \xi = x_{0} - \frac{1}{\lambda}\cosh^{-1}\left( e^{-\lambda^{2}t}\sin(\lambda \phi(x_{0})) \right) \tag{1}\label{originaleqationforxi}.
\]
Thus, we obtain a 2-parameter family of Angenant ovals which pass through the point $(x_{0},\phi(x_{0}))$. We reduce this to a 1-parameter family, by enforcing orthogonality at that point.

\begin{lem}\label{Mainappendixlemma}
    For each $\lambda \in \left( \frac{1}{\phi(x_{0})}\tan^{-1}(\frac{-1}{\phi'(x_{0})}), \frac{\pi}{2\phi(x_{0})} \right)$, let $A_{t}^{\lambda, \xi}$ be the scaled Angenant oval where 
    \[
        t = \frac{1}{2\lambda^{2}}\log\left( \sin^{2}(\lambda \phi(x_{0})) - \frac{\cos^{2}(\lambda \phi(x_{0}))}{\phi'(x_{0})^{2}} \right) \tag{2}\label{equationfort}
    \]
    and
    \[
        \xi = x_{0} - \frac{1}{\lambda}\cosh^{-1}\left( \frac{\sin(\lambda \phi(x_{0}))}{\sqrt{\sin^{2}(\lambda \phi(x_{0})) - \frac{\cos^{2}(\lambda \phi(x_{0}))}{\phi'(x_{0})^{2}}}} \right) \tag{3}\label{equationforxi}.
    \]
    Then $A^{\lambda, \xi}_{t}$ intersects $\partial\Omega$ orthogonally at $(x_{0}, \phi(x_{0}))$.
\end{lem}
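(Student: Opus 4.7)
The plan is to turn the two geometric conditions characterising the lemma into a pair of algebraic equations in the unknowns $t$ and $\xi$ (with $\lambda$ and the data $x_0, \phi(x_0), \phi'(x_0)$ fixed), solve the system explicitly, and then verify that the hypothesised interval for $\lambda$ is precisely the range on which the resulting formulas are real and correspond to the correct branch.

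First I would write down the two conditions. Orthogonality at $(x_0,\phi(x_0))$ means that the tangent to $A_t^{\lambda,\xi}$ at that point has slope $-1/\phi'(x_0)$, and of course the point itself must lie on the oval. Implicit differentiation of $\sin(\lambda y) = e^{\lambda^2 t}\cosh(\lambda(x-\xi))$ gives
\[
    y'(x) = \frac{e^{\lambda^2 t}\sinh(\lambda(x-\xi))}{\cos(\lambda y)},
\]
so the two conditions become
\begin{align*}
    \sin(\lambda\phi(x_0)) &= e^{\lambda^2 t}\cosh(\lambda(x_0-\xi)),\\
    -\frac{\cos(\lambda\phi(x_0))}{\phi'(x_0)} &= e^{\lambda^2 t}\sinh(\lambda(x_0-\xi)).
\end{align*}

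Next I would eliminate $\xi$ by squaring and subtracting, using $\cosh^2-\sinh^2=1$. This yields
\[
    e^{2\lambda^2 t} = \sin^2(\lambda\phi(x_0)) - \frac{\cos^2(\lambda\phi(x_0))}{\phi'(x_0)^2},
\]
which, after taking logarithms, is exactly formula \eqref{equationfort}. Substituting this value back into the first condition gives $\cosh(\lambda(x_0-\xi))$ as the bracketed expression in \eqref{equationforxi}; the correct branch of $\cosh^{-1}$ (i.e.\ the sign of $x_0-\xi$) is fixed by the second condition, since $\phi'(x_0)<0$ forces $\sinh(\lambda(x_0-\xi))>0$ and hence $x_0>\xi$, yielding formula \eqref{equationforxi}.

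Finally I would check the interval for $\lambda$. The upper bound $\lambda<\pi/(2\phi(x_0))$ is needed so that $\lambda\phi(x_0)\in(0,\pi/2)$, i.e.\ $(x_0,\phi(x_0))$ lies in the strip on which the Angenent oval is defined. The lower bound comes from requiring the right-hand side of the equation for $e^{2\lambda^2 t}$ to be positive: this rearranges to $\tan(\lambda\phi(x_0))>-1/\phi'(x_0)$, equivalently $\lambda\phi(x_0)>\tan^{-1}(-1/\phi'(x_0))$. One also checks that on this range the argument of $\cosh^{-1}$ in \eqref{equationforxi} is at least $1$, so the formula is well-defined. The main obstacle is simply bookkeeping of signs and branches; there is no analytical difficulty, since the lemma is really just the solution of a $2\times 2$ transcendental system whose structure is dictated by the identity $\cosh^2-\sinh^2=1$.
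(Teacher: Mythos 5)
Your proposal is correct and follows essentially the same route as the paper: both reduce orthogonality at $(x_0,\phi(x_0))$ plus the incidence condition to a $2\times 2$ transcendental system in $t$ and $\xi$, solve it via the hyperbolic identity, and read off the admissible $\lambda$-interval from positivity of the logarithm's argument and the domain of $\cosh^{-1}$. Your explicit branch check (using $\phi'(x_0)<0$ to force $x_0>\xi$) is a detail the paper leaves implicit, but the argument is the same.
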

\begin{proof}
    It is easy to verify that $(\tanh(\lambda(x_{0} - \xi)), -\cot(\lambda \phi(x_{0})))$ is normal to the Angenant oval at $(x_{0},\phi(x_{0}))$. Similarly $(-\phi'(x_{0}),1)$ is normal to $\partial\Omega$ at $(x_{0},\phi(x_{0}))$. Hence, we require 
    \[
        \tanh(\lambda(x_{0} - \xi))\phi'(x_{0}) + \cot(\lambda \phi(x_{0})) = 0 
    \]
    which, after substituting into \eqref{originaleqationforxi}, can be solved for $t$. Substituting this $t$ back into \eqref{originaleqationforxi} yields \eqref{equationforxi}. \newline
    In \eqref{equationfort}, we need the term inside the $\log$ to be strictly positive, i.e.,
    \[
        0 < \sin^{2}(\lambda \phi(x_{0})) - \frac{\cos^{2}(\lambda \phi(x_{0}))}{\phi'(x_{0})^{2}}.
    \]
    This tells us $\lambda > \frac{1}{\phi(x_{0})}\tan^{-1}(\frac{-1}{\phi'(x_{0})})$. Similarly, in the expression for $\xi$ \eqref{equationforxi}, we need the term in the parenthesis to be greater than one, which is always true provided $\lambda < \frac{\pi}{2\phi(x_{0})}$.
\end{proof}

\begin{defi}\label{defA}
    Given $x_{0}$ as in the beginning of the section, and $\lambda$, $A_{t}^{\lambda,\xi}$ as in Lemma~\ref{Mainappendixlemma}, define $A^{\lambda}_{x_0}$ to the be connected component of $A_{t}^{\lambda,\xi}\cap \Omega$ passing through $(x_0, \phi(x_0))$.
\end{defi}

\subsection{Orthogonality at the Second Point}

For each $(x_{0},\phi(x_{0})) \in \partial\Omega$ as in the beginning of section A.1, we have one parameter family of Angenant ovals $A^{\lambda}_{x_{0}}$ which intersect $\partial\Omega$ orthogonally at $(x_{0},\phi(x_{0}))$. Now, let $\rho > 0$. We will now show that we can find $x_0$ and  $\lambda$ so that $A^{\lambda}_{x_{0}}$ intersects $\partial\Omega$ orthogonally at two points and lies in the strip $\{(x,y) \mid 0 < y < \rho\}$. We first show a preliminary lemma.

\begin{lem}\label{A3}
    Let $\rho$ be such that the line $y = \rho$ intersects $\partial\Omega$ at two points. Then there exists a point $x_{0}$ with $\phi(x_{0}) < \rho$ and $\phi'(x_{0}) < 0$, and such that the following holds:  
    $A^{\lambda}_{x_{0}}$ with  $\lambda := \frac{\pi}{2\rho}$, as in \ref{defA},  intersects $\partial\Omega$ at $(x_{0},\phi(x_{0}))$ orthogonally, and further, intersects $\partial\Omega$ at a second point $(\hat{x},\rho)$, where $\phi'(\hat{x}) > 0$.
\end{lem}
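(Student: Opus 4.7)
The plan is an intermediate value argument. Set $\l = \pi/(2\r)$, so that the bottom-half Angenent ovals $A_t^{\l,\x}$ are confined to $\{0 < y < \r\}$ and reach $y = \r$ exactly at their two tips $x = \x \pm \l^{-1}\cosh^{-1}(e^{-\l^{2}t})$. Let $a_- < a_+$ be the two $x$-coordinates at which the line $y = \r$ meets $\partial \Omega$. For each $x_0 \in (a_+, 1)$ we have $0 < \phi(x_0) < \r$ and $\phi'(x_0) < 0$, so Lemma~\ref{Mainappendixlemma} (assuming its validity condition holds, discussed below) produces $\x(x_0)$ and $t(x_0)$; define
\[
F(x_0) := \x(x_0) - \frac{1}{\l}\cosh^{-1}\bigl(e^{-\l^{2} t(x_0)}\bigr),
\]
the $x$-coordinate of the left tip of the corresponding oval. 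The lemma then reduces to finding $x_0^{*} \in (a_+, 1)$ with $F(x_0^{*}) = a_-$: for such an $x_0^{*}$, the left tip of $A^{\l}_{x_0^{*}}$ lands on $(a_-, \r) \in \partial \Omega$, and automatically $\phi'(a_-) > 0$ since $a_-$ is the left intersection.

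The main computation consists of two boundary limits of $F$. As $x_0 \to a_+^{+}$, $\sin(\l \phi(x_0)) \to 1$ and $\cos(\l \phi(x_0)) \to 0$, so both $\cosh^{-1}$ terms appearing in $\x(x_0)$ and in $F(x_0)$ tend to zero and $F(x_0) \to a_+ > a_-$. As $x_0 \to 1^{-}$, the orthogonality of $\partial \Omega$ to the $x$-axis at $(1,0)$ gives Taylor expansions $\phi(x) \sim \sqrt{2(1-x)/\k^{\Omega}(e_1)}$ and $\phi'(x) \sim -[2\k^{\Omega}(e_1)(1-x)]^{-1/2}$, from which
\[
\sin^{2}(\l\phi(x_0)) - \frac{\cos^{2}(\l\phi(x_0))}{\phi'(x_0)^{2}} \sim \frac{2(\l^{2} - \k^{\Omega}(e_1)^{2})}{\k^{\Omega}(e_1)}(1-x_0).
\]
For $\r < \pi/(2\k^{\Omega}(e_1))$ (satisfied for $\r$ small, which is all we need later), this quantity is positive and tends to $0$, so $e^{-\l^{2}t(x_0)}$ blows up, the $\cosh^{-1}(e^{-\l^{2}t(x_0)})$ factor grows like $-\tfrac{1}{2}\log(1 - x_0)$, and $F(x_0) \to -\infty$. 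Continuity of $F$ on the sub-interval where the validity condition holds, combined with the IVT, then yields the desired $x_0^{*}$.

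The principal difficulty is twofold. First, the validity condition of Lemma~\ref{Mainappendixlemma}, i.e.\ positivity of the displayed expression on all of $(a_+, 1)$, might conceivably fail at some interior point; one handles this by restricting to the sub-interval containing a neighbourhood of $1$ on which the condition holds, and noting that $F$ also tends to $-\infty$ at any interior failure point (since there $\sin^{2}(\l\phi) - \cos^{2}(\l\phi)/\phi'^{2} \to 0^{+}$ and so the $\cosh^{-1}(e^{-\l^{2}t})$ term still diverges), so the IVT conclusion survives. Second, one should verify that the connected component $A^{\l}_{x_0^{*}}$ of $A_{t(x_0^{*})}^{\l,\x(x_0^{*})} \cap \Omega$ through $(x_0^{*}, \phi(x_0^{*}))$ really does extend all the way to the left tip $(a_-, \r)$ without being clipped by an earlier crossing of $\partial \Omega$. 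For small $\r$ this follows from strict convexity of $\Omega$ together with the fact that the bottom-half oval is a graph strictly below $\r$, while $\phi \ge \r$ on $[a_-, a_+]$ and only slightly below $\r$ on $(a_+, x_0^{*})$ (where the oval safely dips down toward its minimum at $x = \x(x_0^{*})$).
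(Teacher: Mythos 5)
Your proposal is essentially the paper's own argument: with $\lambda=\pi/(2\rho)$ fixed, the paper likewise tracks the $x$-coordinate of the left tip of $A^{\lambda}_{x_0}$ (your $F(x_0)$, its $\hat{x}$), notes that it tends to $x_0$ as $x_0$ decreases to the right crossing point $a_+$ and to $-\infty$ as $x_0\to 1$, and concludes by the intermediate value theorem that some $x_0$ puts the tip exactly at $(a_-,\rho)$; your Taylor expansion near $x_0=1$, the restriction $\rho<\pi/(2\kappa^{\Omega}(e_1))$ making the validity condition of Lemma~\ref{Mainappendixlemma} hold there, and the check that the component $A^{\lambda}_{x_0}$ is not clipped before the tip are details the paper simply asserts or omits. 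One repair to your contingency remark: if the validity condition failed at an interior point of $(a_+,1)$, you should apply the IVT on the maximal component of validity adjacent to $a_+$, where $F\to a_+>a_-$ at the left end and $F\to-\infty$ at the failure point, not on the component adjacent to $1$, where $F\to-\infty$ at both ends and the IVT yields nothing; and the non-clipping claim follows cleanly for every admissible $\rho$ (no smallness needed) because $\phi$ is concave and the lower oval graph is convex, so $\phi-y_{\mathrm{oval}}$ is concave, nonnegative at $a_+$ and at $x_0^{*}$, hence nonnegative on $[a_+,x_0^{*}]$, while on $[a_-,a_+]$ one has $y_{\mathrm{oval}}\le\rho\le\phi$.
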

\begin{proof}
    By assumption, the line $y = \rho =\frac{\pi}{2\lambda}$ intersects $\partial\Omega$ at a point $(x_{1},\rho)$ where $\phi'(x_{1}) < 0$. Then for $x_{0} > x_{1}$, $\phi(x_{0}) = \rho$ and so $\frac{\pi}{2\phi(x_{0})} > \frac{\pi}{2\rho}$ from which the previous lemma implies $A^{\lambda}_{x_{0}}$ is well-defined. Recall that $A^{\lambda}_{x_{0}}$ is part of a scaled Angenent oval (as in Definition \ref{defA}) and the point on this Angenent oval with outward pointing unit normal equal to $-e_1$ has $y$-coordinate equal to $\rho$ and $x$-coordinate given by 
    \[
        \hat{x} = \frac{-1}{\lambda}\cosh^{-1}\left(\frac{1}{\sin^{2}(\lambda\phi(x_{0})) - \frac{\cos^{2}(\lambda\phi(x_{0}))}{\phi'(x_{0})}} \right) + \xi.
    \]
    As $x_{0} \to 1$, $\hat{x} \to -\infty$.
    % More detail needed here.
    Additionally, one can check (by replacing $\xi$ from Lemma~\ref{Mainappendixlemma}), as $x_{0} \searrow x_{1}$, $\hat{x} \to x_{0}$. Therefore, by the intermediate value theorem, we can find an $x_{0}$ satisfying the consequent of the lemma.
    % Notation here needs to be cleaned up.
\end{proof}
\begin{lem}\label{defend}
    For any $\rho > 0$,  we can find $x_0$ and  $\lambda_\rho$, as in Lemma \ref{Mainappendixlemma} so that $\phi(x_0)<\rho$ and $A^{\lambda_\rho}_{x_{0}}$ intersects $\partial\Omega$ orthogonally at two points and lies in the strip $\{(x,y) \mid 0 < y < \rho\}$. 
\end{lem}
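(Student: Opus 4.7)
My plan is to prove Lemma~\ref{defend} by a continuity / intermediate value argument that extends the configuration furnished by Lemma~\ref{A3}. At the scale $\lambda_0 := \pi/(2\rho)$, Lemma~\ref{A3} produces $x_0^\star$ such that $A^{\lambda_0}_{x_0^\star}$ is orthogonal to $\partial\Omega$ at $(x_0^\star,\phi(x_0^\star))$ and meets $\partial\Omega$ at the oval's leftmost point $(\hat x,\rho)$. At $(\hat x,\rho)$ the oval has vertical tangent while $\partial\Omega$ has tangent of slope $\phi'(\hat x)>0$, so the second intersection is transverse but fails orthogonality by a signed angular defect of definite sign.

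Next I would extend this into a smooth one-parameter family. For each $\lambda$ slightly greater than $\lambda_0$, Lemma~\ref{Mainappendixlemma} determines a continuous choice $x_0=x_0(\lambda)$ (with $x_0(\lambda_0)=x_0^\star$) so that $A^{\lambda}_{x_0(\lambda)}$ remains orthogonal at its first intersection $(x_0(\lambda),\phi(x_0(\lambda)))$. Since $\pi/(2\lambda)<\rho$ for $\lambda>\lambda_0$, the oval's maximum height lies strictly below $\rho$, and the oval still has a second intersection $(\hat x(\lambda),\hat y(\lambda))\in\partial\Omega$ lying in $\{y<\rho\}$, depending continuously on $\lambda$. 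I would then define the signed angular defect
\[
\Theta(\lambda):=\langle T(\lambda),T^\Omega(\lambda)\rangle,
\]
where $T(\lambda)$ and $T^\Omega(\lambda)$ are the unit tangents to the oval and to $\partial\Omega$ at the second intersection, so that $\Theta(\lambda)=0$ precisely when the second intersection is orthogonal. By the previous paragraph, $\Theta(\lambda_0)\ne 0$ with a known sign.

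The core of the argument is then to exhibit a $\lambda_*>\lambda_0$ with $\Theta(\lambda_*)$ of opposite sign to $\Theta(\lambda_0)$, so that IVT produces $\lambda_\rho\in(\lambda_0,\lambda_*)$ with $\Theta(\lambda_\rho)=0$. I would do this by letting $\lambda$ approach the upper bound of admissibility in Lemma~\ref{Mainappendixlemma} (corresponding to $t\to 0^-$), at which the Angenent oval degenerates to an arbitrarily small lens concentrated near $\partial\Omega$. In this limit the oval is well-approximated by a small circle inscribed in the osculating circle of $\partial\Omega$, and a local computation in coordinates adapted to $\partial\Omega$ (using strict convexity) shows that, in the small-oval regime, $\Theta$ is governed by the difference in boundary curvature between the two intersection points and hence can be arranged to take the opposite sign. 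Once $\lambda_\rho$ is secured, containment of $A^{\lambda_\rho}_{x_0(\lambda_\rho)}$ in $\{0<y<\rho\}$ is automatic: the oval's maximum height $\pi/(2\lambda_\rho)<\rho$, and the convex arc joining the two orthogonal intersection points lies in $\Omega\cap\{y<\pi/(2\lambda_\rho)\}$ by construction.

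The main obstacle is the rigorous verification of the sign change of $\Theta$. The quantities $\xi$ and $t$ depend transcendentally on $(\lambda,x_0(\lambda),\phi(x_0(\lambda)),\phi'(x_0(\lambda)))$ through the formulas of Lemma~\ref{Mainappendixlemma}, so tracking the position of $(\hat x(\lambda),\hat y(\lambda))$ and the associated tangent rotation requires careful asymptotic analysis, especially in the degenerate limit $t\to 0^-$ where one must isolate the leading contribution coming from the boundary curvature asymmetry and show that it dominates the oval-tangent rotation with the correct sign.
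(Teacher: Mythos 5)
Your skeleton (start from the configuration of Lemma~\ref{A3} at $\lambda=\pi/(2\rho)$, move through the one-parameter family of Lemma~\ref{Mainappendixlemma} at fixed first contact point, and run an intermediate value argument on the angle at the second intersection) is the same as the paper's, but the step that is supposed to produce the opposite sign is taken in the wrong direction and breaks down. You anchor the second IVT endpoint at the upper admissibility bound $\lambda\to\pi/(2\phi(x_0))$, i.e.\ $t\to 0^-$, where the oval degenerates to a small lens at $(x_0,\phi(x_0))$. In that regime the quantity $\Theta(\lambda)$ is not even defined: the time-slice is, to leading order, a circle of radius $\varepsilon\to 0$ whose centre lies on the tangent line to $\partial\Omega$ at $(x_0,\phi(x_0))$ (orthogonality forces the radius at the contact point to be tangent to $\partial\Omega$), and since $\phi'(x_0)\neq 0$ the endpoint of the arc $A^{\lambda}_{x_0}$ (the leftmost point of the oval, at height $\pi/(2\lambda)$, where the tangent is vertical) sits at distance of order $\varepsilon|\phi'(x_0)|$ on the $\Omega$-side of that tangent line, while $\partial\Omega$ deviates from its tangent only by $O(\varepsilon^2)$. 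Hence the connected component of the oval inside $\Omega$ through $(x_0,\phi(x_0))$ terminates at the oval's own endpoint strictly inside $\Omega$ and has \emph{no} second intersection with $\partial\Omega$. So your assumption that $(\hat x(\lambda),\hat y(\lambda))\in\partial\Omega$ persists for all admissible $\lambda>\pi/(2\rho)$ is unjustified and false near the degenerate limit, the heuristic ``difference of boundary curvatures between the two intersection points'' degenerates (the two points coalesce), and the IVT cannot be closed this way. Note also that your containment argument silently uses $\lambda_\rho>\pi/(2\rho)$, which is exactly what this broken step was meant to deliver.

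The paper moves $\lambda$ the other way, toward \emph{wider} ovals: keeping $x_0$ fixed (no choice $x_0(\lambda)$ is needed; Lemma~\ref{Mainappendixlemma} already gives the whole family at fixed $x_0$), it shows by an elementary intermediate value argument applied to $f(\lambda)=\phi'(x_0)^2\tanh^2\bigl(\lambda(x_0+1)\bigr)-\cot^2\bigl(\lambda\phi(x_0)\bigr)$ that some admissible $\lambda$ has shift $\xi=-1$. For that oval the lowest point sits at the far end $x=-1$ of the diameter, so the oval leaves $\Omega$ through the left part of $\partial\Omega$ while still descending, and the angle at the second intersection is obtuse; comparing with the acute angle in the configuration of Lemma~\ref{A3} gives the sign change, and continuity in $\lambda$ yields $\lambda_\rho$. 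To repair your write-up, replace the small-lens limit $t\to 0^-$ by such a concrete wide-oval configuration (e.g.\ the $\xi=-1$ one), where the existence of the second intersection and the obtuseness of the angle can actually be verified.
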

\begin{proof}
Given $\rho$, fix $x_{0}$ as given in Lemma \ref{A3}. Then, the angle between the tangent vectors of $A^{\frac{\pi}{2\rho}}_{x_0}$ and $\partial\Omega$ at $(\hat{x},\phi(\hat{x}))$ is less than $\pi/2$. Keeping $x_0$ fixed, we consider now $A^\lambda_{x_0}$ as in Definition \ref{defA} and we claim that there exists a $\lambda_\rho$ such that $A^{\lambda_\rho}_{x_0}$ intersects $\partial \Omega$ orthogonally (at both points). By continuity, it suffices to show that there exists a $\lambda$ such that the angle between the tangent vectors of $A^{\lambda}_{x_0}$ and $\partial\Omega$ at the \emph{second} point of intersection is less than $\pi/2$. We do this by showing, in the following claim, that there exists $\lambda$ for which the corresponding $\xi$ (determined in the definition for $A^\lambda_{x_0}$ in Lemma \ref{Mainappendixlemma}) is equal to $-1$. 
\begin{claim}
    There exists $\lambda \in \left( \frac{1}{\phi(x_{0})}\tan^{-1}(\frac{-1}{\phi'(x_{0})}), \frac{\pi}{2\phi(x_{0})} \right)$ such that $\xi = -1$.
\end{claim}
\begin{proof}
    $\xi = -1$ means we are trying to solve 
    \[
        x_{0} - \frac{1}{\lambda}\cosh^{-1}\left( \frac{\sin(\lambda \phi(x_{0}))}{\sqrt{\sin^{2}(\lambda \phi(x_{0})) - \frac{\cos^{2}(\lambda \phi(x_{0}))}{\phi'(x_{0})^{2}}}} \right) = -1,
    \]
    which after rearranging becomes 
    \[
        \phi'(x_{0})^{2}\tanh^{2}{(\lambda(x_{0}+1)) = \cot^{2}(\lambda\phi(x_{0}))}.
    \]
    Define a function
    \[
        f(\lambda) := \phi'(x_{0})^{2}\tanh^{2}{(\lambda(x_{0}+1)) - \cot^{2}(\lambda\phi(x_{0}))}.
    \]
    If $\lambda = \frac{1}{\phi(x_{0})}\tan^{-1}(\frac{-1}{\phi'(x_{0})})$, then
   \[
        f(\lambda) = \phi'(x_{0})^{2}\left( \tanh^{2}(\lambda(x_{0}+1)) - 1 \right) < 0.
   \]
   On the other hand, if $\lambda = \frac{\pi}{2\phi(x_{0})}$
   \[
        f(\lambda) = \phi'(x_{0})^{2}\tanh^{2}(\frac{\pi}{2\phi(x_{0})}(x_{0}+1)) > 0,
   \]
   this proves the claim.
\end{proof}
\end{proof}
\subsection{Calculating the asymptotic behavior of $\lambda_\rho$ (as in Lemma \ref{defend}) as $\rho\to0$}

Now that we have justified the construction of the shifted and scaled Angenent oval which intersects $\partial\Omega$ orthogonally below the horizontal line $y=\rho$ (Lemma \ref{defend}), we will show that its scale factor $\lambda_\rho$, as $\rho\to 0$, does have a limit, which we will call $\lambda_0$. To do this, we consider a sequence $\rho_{i}$ tending to $0$, and for each $\rho_{i}$ we considered the corresponding $A^{\lambda_{i}}_{x_{i}}$ as constructed in Lemma~\ref{defend}. We first show that $\liminf_{i \to \infty} \lambda_{i}$ and $\limsup_{i \to \infty} \lambda_{i}$  are bounded below and above respectively.

\begin{lem}\label{lambda0bounded}
	We have
    \[
    \max\{\k^{\Omega}(e_1),\k^{\Omega}(-e_1)\} \leq \liminf_{i \to \infty}\lambda_{i} \leq \limsup_{i \to \infty} \lambda_{i} \leq \sigma,
    \]
    where $\sigma$ solves the equation $\sigma\tanh{\sigma} = \max\{\k^{\Omega}(e_1),\k^{\Omega}(-e_1)\}$.
\end{lem}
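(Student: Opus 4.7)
The plan is to derive a single transcendental equation satisfied by each $\lambda_i$ by equating two expressions for the center $\xi$ of the shifted Angenent oval: one coming from orthogonality at the right intersection point $(x_0^i,\phi(x_0^i))$, as given by \eqref{equationforxi}, the other from orthogonality at the left intersection point $(\hat{x}^i,\phi(\hat{x}^i))$ obtained by an analogous formula (with $\phi'(\hat{x}^i)>0$ and the sign of the $\cosh^{-1}$ term flipped). This yields
\[
    x_0^i - \hat{x}^i \;=\; \frac{1}{\lambda_i}\bigl[\cosh^{-1}(R_i) + \cosh^{-1}(L_i)\bigr],
\]
where $R_i,L_i$ are the arguments of $\cosh^{-1}$ in \eqref{equationforxi} at the right and left boundary points, respectively. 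I will then study the $i\to\infty$ asymptotics of this identity, using $x_0^i\to 1$ and $\hat{x}^i\to -1$.

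The asymptotic analysis relies on a Taylor expansion of $\partial\Omega$ near $\pm e_1$, where it meets the $x$-axis orthogonally with curvatures $\k_1:=\k^\Omega(e_1)$ and $\k_2:=\k^\Omega(-e_1)$. Writing the boundary locally as $1-x=\tfrac{\k_1}{2}\phi^{2}+O(\phi^{3})$, one obtains $\phi(x_0^i)\to 0$ and $\phi(x_0^i)\phi'(x_0^i)\to -1/\k_1$. Substituting this into $R_i$ and using $\sin u\sim u$, $\cos u\to 1$ with $u=\lambda_i\phi(x_0^i)\to 0$ shows that along any subsequence along which $\lambda_i\to\lambda_*\in[\max\{\k_1,\k_2\},\infty]$,
\[
    R_i \longrightarrow \frac{1}{\sqrt{1-\k_1^2/\lambda_*^2}},\qquad \cosh^{-1}(R_i) \longrightarrow \operatorname{arctanh}\!\left(\frac{\k_1}{\lambda_*}\right),
\]
and an identical argument at the left gives $\cosh^{-1}(L_i)\to\operatorname{arctanh}(\k_2/\lambda_*)$.

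The lower bound follows directly from the feasibility conditions in Lemma~\ref{Mainappendixlemma} (positivity of the radicand inside $R_i$ and $R_i\ge1$): these force $\lambda_i>\tilde\lambda_{1,i}$ with $\tilde\lambda_{1,i}\to\k_1$ (and analogously $\lambda_i>\tilde\lambda_{2,i}\to\k_2$), so $\liminf\lambda_i\ge\max\{\k_1,\k_2\}$. For the upper bound, pass to a subsequence with $\lambda_{i}\to\lambda_*\in[\max\{\k_1,\k_2\},\infty]$; the case $\lambda_*=\infty$ is excluded since the right-hand side of the displayed identity then tends to $0$ while the left side tends to $2$. Letting $i\to\infty$ in the identity therefore gives
\[
    2\lambda_* \;=\; \operatorname{arctanh}\!\left(\frac{\k_1}{\lambda_*}\right) + \operatorname{arctanh}\!\left(\frac{\k_2}{\lambda_*}\right) \;\leq\; 2\operatorname{arctanh}\!\left(\frac{\max\{\k_1,\k_2\}}{\lambda_*}\right),
\]
equivalently $\lambda_*\tanh\lambda_*\le\max\{\k_1,\k_2\}=\sigma\tanh\sigma$. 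Since $x\mapsto x\tanh x$ is strictly increasing on $(0,\infty)$, this forces $\lambda_*\le\sigma$, giving $\limsup\lambda_i\le\sigma$.

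The principal technical hurdle is the careful asymptotic analysis of $R_i$ and $L_i$—in particular, verifying the limit $\phi(x_0^i)\phi'(x_0^i)\to -1/\k_1$ from the boundary expansion and checking that the a priori feasibility constraints from Lemma~\ref{Mainappendixlemma} yield the sharp limits $\k_1,\k_2$; after that, only elementary manipulations of hyperbolic functions remain.
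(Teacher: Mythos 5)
Your argument is correct, but the route to the upper bound is genuinely different from the paper's. For the lower bound you use exactly the paper's feasibility constraint from Lemma~\ref{Mainappendixlemma} (applied at both intersection points, rather than invoking the option of anchoring the construction on the other side), which is fine. For the upper bound, the paper argues geometrically: it compares the constructed oval with the \emph{unshifted} oval $A^{\sigma_i}_{x_i}$, uses the reflection $R(\partial\Omega\cap\{x\ge0,0\le y\le\rho_i\})$ together with the acute-angle observation (Remark~\ref{acuteangleremark}) and a monotonicity claim (``as the shift $\xi$ decreases, the scale $\lambda$ decreases'') to get $\lambda_i<\sigma_i$ with $\sigma_i\to\sigma$. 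You instead equate the two expressions for the centre $\xi_i$ coming from orthogonality at both endpoints, obtaining the exact identity $x_0^i-\hat x^i=\lambda_i^{-1}\bigl[\cosh^{-1}(R_i)+\cosh^{-1}(L_i)\bigr]$, and pass to the limit; since $\cosh^{-1}(R_i)=\operatorname{arctanh}\bigl(|\cot(\lambda_i\phi(x_0^i))/\phi'(x_0^i)|\bigr)$ and $\phi\phi'\to\mp1/\k^{\Omega}(\pm e_1)$, any finite subsequential limit $\lambda_*$ satisfies $2\lambda_*=\operatorname{arctanh}(\k^{\Omega}(e_1)/\lambda_*)+\operatorname{arctanh}(\k^{\Omega}(-e_1)/\lambda_*)$, from which $\lambda_*\tanh\lambda_*\le\max\{\k^{\Omega}(e_1),\k^{\Omega}(-e_1)\}=\sigma\tanh\sigma$ and hence $\lambda_*\le\sigma$ by monotonicity of $x\tanh x$. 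This buys two things: it bypasses the paper's unproved monotonicity-in-$\xi$ assertion, and the limiting identity is precisely the equation (equivalent to \eqref{eq: lambda0system} and the quadratic in $\coth 2\lambda_0$) that the paper derives \emph{after} the lemma to pin down $\lambda_0$, so your argument also gives uniqueness of the subsequential limit for free. Two small points you should make explicit: in the case $\lambda_*=\infty$ you cannot assume $\lambda_i\phi(x_0^i)\to0$, so to see that the right-hand side tends to $0$ use $u\cot u\le 1$ on $(0,\pi/2)$ to get $|\cot(\lambda_i\phi)/\phi'|\lesssim \k^{\Omega}(e_1)/\lambda_i\to0$; and if a subsequential limit equalled $\max\{\k^{\Omega}(e_1),\k^{\Omega}(-e_1)\}$ the corresponding $\operatorname{arctanh}$ term would blow up against the bounded left-hand side, so the displayed limit equation is only written for $\lambda_*$ strictly above that value (this does not affect the stated bounds).
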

\begin{proof}
    Consider a sequence $\rho_{i}\downarrow 0$ and the corresponding $A^{\lambda_{i}}_{x_{i}}$ as constructed in Lemma~\ref{defend}. Recall that 
    \[
        \lambda_{i} > \frac{1}{\phi(x_{i})}\tan^{-1}(\frac{-1}{\phi'(x_{i})}).
    \]
    Note that $x_{i} \to 1$, as $i\to \infty$, which gives $\liminf_{i \to \infty}\lambda_{i} > \k^{\Omega}(e_1)$, and since we could have done the entire construction in the previous section by considering $x_{i}$ so that $\phi'(x_{i}) > 0$, the lower bound follows.
We also note that, the above freedom to choose ``side'' for $x_i$, allows us to assume without loss of generality that $\kappa(e_1)\ge \kappa(-e_1)$ and, moreover, in the case of equality the following picture holds: If we denote by $R$ the reflection about the $y$-axis, then $R(\partial\Omega \cap \{x \geq 0, 0 \leq y \leq \rho_{i}\})$ lies inside $\overline\Omega$ for all $\rho_{i}$ sufficiently close to 0.

Consider now an Angenent oval $A_{x_{i}}^{\sigma_{i}}$ as in Lemma \ref{Mainappendixlemma} (see also Definition \ref{defA}) which is not shifted, that is
    \[
        \xi_{i} = x_{i} - \frac{1}{\sigma_{i}}\cosh^{-1}\left(e^{-\sigma_{i}^{2}t}\sin(\sigma_{i} \phi(x_{i})) \right) = 0. \tag{4}\label{xi}
    \]
    Then, $A_{x_{i}}^{\sigma_{i}}$ intersects $R(\partial\Omega \cap \{x \geq 0, 0 \leq y \leq \rho_{i}\})$ orthogonally, and thus $\partial\Omega \cap \{x < 0\}$ at an acute angle (as per defintion~\ref{definitionofacuteangle}, see figure~\ref{fig:reflectedangenant} and Remark~\ref{acuteangleremark}). Following the construction in Lemma~\ref{defend}, we point out that as the shift $\xi$ decreases, the scale $\lambda$ also decreases, and so by decreasing $\xi$ towards $-1$ (when the angle becomes obtuse), we can infer that $\lambda_{i} < \sigma_{i}$. Therefore $\limsup_{i \to \infty}\lambda_{i} \leq \sigma := \lim_{i \to \infty} \sigma_{i}$. To calculate $\sigma$, note that \eqref{xi} can be written as
    \[
        \cosh(\sigma_{i} x_{i}) = \frac{1}{\sqrt{1-\frac{\cot^{2}(\sigma_{i}\phi(x_{i}))}{\phi'(x_{i})^{2}}}}.
    \]
    After taking the limit as $x_{i} \to 1$, this becomes
    \[
        \cosh(\sigma) = \frac{1}{\sqrt{1-\frac{\k^{\Omega}(e_1)^{2}}{\sigma^{2}}}},
    \]
    which rearranges to 
    \[
        \sigma\tanh{\sigma} = \k^{\Omega}(e_1).
    \]
\end{proof}

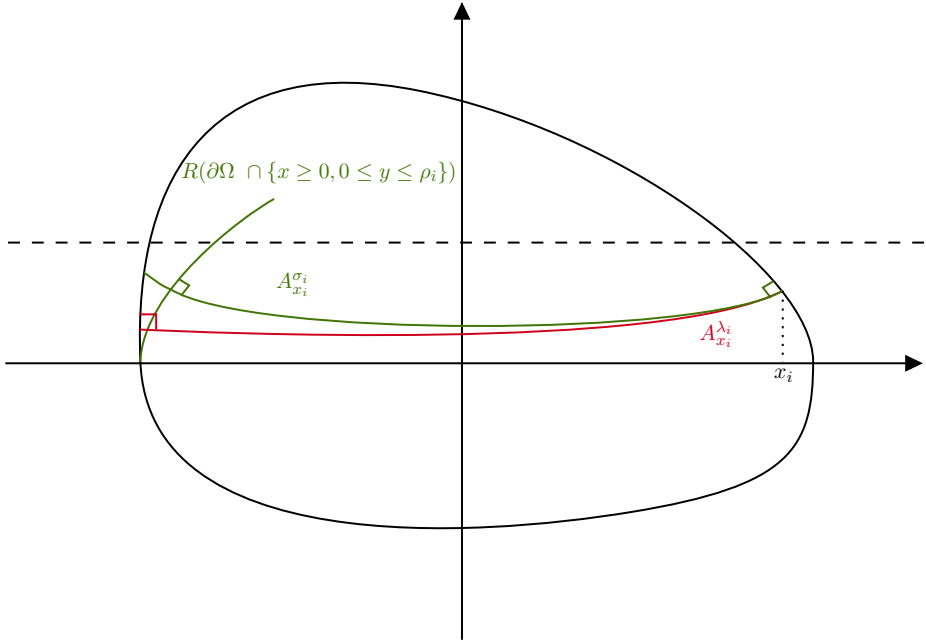
\begin{figure}[t]
\begin{center}
\tikzset{every picture/.style={line width=0.75pt}} %set default line width to 0.75pt        
\begin{tikzpicture}[x=0.75pt,y=0.75pt,yscale=-0.9,xscale=0.9]
%uncomment if require: \path (0,401); %set diagram left start at 0, and has height of 401
%Shape: Polygon Curved [id:ds5536752333839055] 
\draw   (170.15,215.06) .. controls (165.63,104.91) and (214.64,40.54) .. (326.24,63.1) .. controls (437.83,85.67) and (547.17,165.96) .. (547.17,215.06) .. controls (547.17,264.17) and (534.35,284.74) .. (445.37,299.33) .. controls (356.4,313.93) and (174.68,325.21) .. (170.15,215.06) -- cycle ;
%Straight Lines [id:da7544908212762487] 
\draw    (94.5,215.95) -- (604.99,215.95) ;
\draw [shift={(607.99,215.95)}, rotate = 180] [fill={rgb, 255:red, 0; green, 0; blue, 0 }  ][line width=0.08]  [draw opacity=0] (8.93,-4.29) -- (0,0) -- (8.93,4.29) -- cycle    ;
%Straight Lines [id:da3661508640067399] 
\draw    (350.37,17) -- (350.37,371) ;
\draw [shift={(350.37,14)}, rotate = 90] [fill={rgb, 255:red, 0; green, 0; blue, 0 }  ][line width=0.08]  [draw opacity=0] (8.93,-4.29) -- (0,0) -- (8.93,4.29) -- cycle    ;
%Straight Lines [id:da32325348926437836] 
\draw  [dash pattern={on 4.5pt off 4.5pt}]  (96.01,148.26) -- (609.5,148.26) ;
%Curve Lines [id:da9961017297977146] 
\draw [color={rgb, 255:red, 65; green, 117; blue, 5 }  ,draw opacity=1 ]   (170.15,215.06) .. controls (171.51,175.25) and (227.01,133.84) .. (245.1,123.75) ;
%Shape: Right Angle [id:dp3755010133040402] 
\draw  [color={rgb, 255:red, 65; green, 117; blue, 5 }  ,draw opacity=1 ] (191.62,168.67) -- (197.5,172.25) -- (192.97,178.01) ;
%Shape: Right Angle [id:dp2444103517076619] 
\draw  [color={rgb, 255:red, 65; green, 117; blue, 5 }  ,draw opacity=1 ] (523.5,179.26) -- (519,173.5) -- (524.95,169.9) ;
%Curve Lines [id:da4881138961143263] 
\draw [color={rgb, 255:red, 208; green, 2; blue, 27 }  ,draw opacity=1 ]   (169.7,197.01) .. controls (204.69,198.6) and (463.47,210.28) .. (530.13,175.25) ;
%Curve Lines [id:da2355272165423714] 
\draw [color={rgb, 255:red, 65; green, 117; blue, 5 }  ,draw opacity=1 ]   (172.11,165.16) .. controls (176.3,167.5) and (180.94,172.36) .. (187.09,174.43) .. controls (230.02,201.79) and (485.79,201.79) .. (530.13,175.25) ;
%Straight Lines [id:da5307765935400779] 
\draw  [dash pattern={on 0.84pt off 2.51pt}]  (530.13,175.25) -- (530.13,215.06) ;
%Shape: Right Angle [id:dp6366557437266374] 
\draw  [color={rgb, 255:red, 208; green, 2; blue, 27 }  ,draw opacity=1 ] (169.7,188.5) -- (179,188.5) -- (179,197.01) ;
% Text Node
\draw (244.95,164.3) node [anchor=north west][inner sep=0.75pt]  [font=\small,xscale=0.75,yscale=0.75]  {${\textstyle \textcolor[rgb]{0.25,0.46,0.02}{A}\textcolor[rgb]{0.25,0.46,0.02}{_{x_{i}}^{\sigma _{i}}}}$};
% Text Node
\draw (482.31,190.84) node [anchor=north west][inner sep=0.75pt]  [font=\small,xscale=0.75,yscale=0.75]  {$\textcolor[rgb]{0.82,0.01,0.11}{A}\textcolor[rgb]{0.82,0.01,0.11}{_{x_{i}}^{\lambda _{i}}}$};
% Text Node
\draw (524.14,217.93) node [anchor=north west][inner sep=0.75pt]  [font=\small,xscale=0.75,yscale=0.75]  {$x_{i}$};
% Text Node
\draw (192,101.4) node [anchor=north west][inner sep=0.75pt]  [font=\small,color={rgb, 255:red, 65; green, 117; blue, 5 }  ,opacity=1 ,xscale=0.75,yscale=0.75]  {$R( \partial \Omega \ \cap \{x\geq 0,0\leq y\leq \rho_{i} \})$};
\end{tikzpicture}
\end{center}
\caption{$A^{\sigma_{i}}_{x_{i}}$ intersects $\partial\Omega$ at an acute angle.}
\label{fig:reflectedangenant}
\end{figure}

Lemma~\ref{lambda0bounded} implies there exists a subsequence $\rho_{i_{j}}$, for which the corresponding $\lambda_{i_{j}}$ converges, as $j \to \infty$, to some $\lambda_{0} \in (0,\infty)$. We will show that $\lambda_{0}$ is independent of the sequence $\rho_i$. This then implies that the scale $\lambda$ has a limit as $\rho \to 0$.

First note that the shifts $\xi_{i_{j}}$ of $A_{x_{i_j}}^{\lambda_{i_{j}}}$, as in \eqref{equationforxi} in Lemma~\ref{Mainappendixlemma}, also have a limit, which we call $\xi_0$ and satisfies 
% First note that for a fixed $\lambda$, the shift $\xi_{i_j}$ of the Angenent oval A
% \[
%     \lim_{x_{i_{j}} \to 1}\xi_{i_{j}} = 1 - \frac{1}{\lambda}\cosh^{-1}\left({\frac{1}{\sqrt{1-\frac{\k^{\Omega}(e_1)^{2}}{\lambda^2}}}}\right).
% \]
% So we have a corresponding ``limiting shift'', $\xi_{0}$, as $\lambda_{i_{j}} \to \lambda_{0}$, which is given by
\[
    \xi_{0} := 1 - \frac{1}{\lambda_{0}}\cosh^{-1}\left({\frac{1}{\sqrt{1-\frac{\k^{\Omega}(e_1)^{2}}{\lambda_{0}^{2}}}}}\right).
\]
Recall that if $A^{\lambda_{i_{j}}}_{x_{i_{j}}}$ intersects $\partial\Omega$ at a point $(\hat{x},\phi(\hat{x})) \in \partial\Omega$ orthogonally (here $\hat{x}$ is either $x_{i_{j}}$ or the corresponding $x$-coordinate of the point on $\partial\Omega \cap A^{\lambda_{i_{j}}}_{x_{i_{j}}}$, $(\hat{x},\phi(\hat{x}))$ with $\phi'(\hat{x}) > 0$), then
\[
    \tanh(\lambda_{i_{j}}(\hat{x} - \xi_{i_{j}}))\phi'(\hat{x}) + \cot(\lambda_{i_{j}} \phi(\hat{x})) = 0, 
\]
which can be written as
\[
    \tanh(\lambda_{i_{j}}(\hat{x} - \xi_{i_{j}})) = -\frac{\cot(\lambda_{i_{j}} \phi(\hat{x}))}{\phi'(\hat{x})}.
\]
Taking the limit as $j\to \infty$ and noting that $\hat{x} \to 1$ or  $\hat{x} \to -1$ we obtain
\[
\begin{cases}
\tanh(\lambda_{0}(1-\xi_{0})) &= \k^{\Omega}(e_1) \\
\tanh(\lambda_{0}(-1-\xi_{0})) &=  -\k^{\Omega}(-e_1).
\end{cases}\tag{5}\label{eq: lambda0system}
\]
The system \eqref{eq: lambda0system} can be reduced (by using the addition formula for the hyperbolic tangent) to 
\[
\lambda_{0}^{2} - \lambda_{0}(\k^{\Omega}(e_1) + \k^{\Omega}(-e_1))\coth{2\lambda_{0}} + \k^{\Omega}(e_1)\k^{\Omega}(-e_1) = 0,
\]
which, since $\lambda_{0} \geq \k^{\Omega}(e_1),\k^{\Omega}(-e_1)$, determines $\lambda_{0}$ uniquely.

\printbibliography

\end{appendix}

\end{document}